\newtheorem{theorem}{Theorem}[section]
\newtheorem{lemma}[theorem]{Lemma}
\newtheorem{proposition}[theorem]{Proposition}
\newtheorem{corollary}[theorem]{Corollary}
\newtheorem{maintheorem}{Theorem}
\theoremstyle{definition}
\newtheorem{definition}[theorem]{Definition}
\newtheorem{remark}[theorem]{Remark}
\newtheorem{example}[theorem]{Example}
\newtheorem{question}[theorem]{Question}
\newcommand{\Q}{\mathbb{Q}}
\newcommand{\Qp}{\mathbb{Q}_p}
\newcommand{\Cp}{\mathbb{C}_p}
\newcommand{\Zp}{\mathbb{Z}_p}
\newcommand{\N}{\mathbb{N}}
\newcommand{\R}{\mathbb{R}}
\newcommand{\Z}{\mathbb{Z}}
\newcommand{\C}{\mathbb{C}}
\newcommand{\dd}{\mathrm{d}}
\newcommand{\ii}{\mathrm{i}}
\newcommand{\e}{\mathrm{e}}
\newcommand{\f}{\mathrm{f}}
\newcommand{\h}{\mathrm{h}}
\newcommand{\sphere}{\mathrm{S}^2_p}
\newcommand{\Circle}{\mathrm{S}^1_p}
\newcommand{\letpprime}{Let $p$ be a prime number}
\newcommand{\ocal}{\mathcal{O}}
\newcommand{\M}{\mathcal{M}}
\newcommand{\axes}{\draw (-1,0) -- (1,0); \draw (0,-1) -- (0,1);}
\newcommand{\bDSq}{\overline{\DSq}}
\renewcommand{\le}{\leqslant}
\renewcommand{\ge}{\geqslant}
\DeclareMathOperator{\im}{Im}
\DeclareMathOperator{\ord}{ord}
\DeclareMathOperator{\DSq}{DSq}
\DeclareMathOperator{\Span}{span}
\numberwithin{equation}{section}
\title{$p$-adic symplectic geometry of integrable systems and Weierstrass-Williamson theory I}
\author[Luis Crespo, \'Alvaro Pelayo]{Luis Crespo\,\,\,\,\,\, \'Alvaro Pelayo}
\address{Luis Crespo,
	Departamento de Matem\'{a}ticas, Estad\'{i}stica y Computaci\'{o}n, Universidad de Cantabria, Av.~de Los Castros 48, 39005 Santander, Spain}
\email{luis.cresporuiz@unican.es}
\address{\'Alvaro Pelayo,
	Facultad de Ciencias Matem\'aticas,
	Universidad Complutense de Madrid, 28040 Madrid, Spain, and Real Academia de Ciencias Exactas, F\'isicas y Naturales, Spain}
\email{alvpel01@ucm.es}
\begin{document}

\maketitle

\begin{center}
	\emph{In memory of Professor Vladimir Voevodsky (1966--2017).}
\end{center}

\begin{abstract}
	The local symplectic theory of integrable systems is fundamental to understand their global theory, as well as the behavior near singularities of fundamental models from classical and quantum mechanics which are known to be integrable, such as the Jaynes-Cummings model and the coupled angular momenta. We establish the foundations of the local symplectic geometry of $p$-adic integrable systems on $4$-dimensional $p$-adic analytic symplectic manifolds, by classifiying all their possible local models. In order to do this we develop a new approach, of independent interest, to the theory of Weierstrass and Williamson concerning the diagonalization of real matrices by real symplectic matrices. We show that this approach can be generalized to $p$-adic matrices, leading to a classification of real $(2n)$-by-$(2n)$ matrices and of $p$-adic $2$-by-$2$ and $4$-by-$4$ matrix normal forms, including, up to dimension $4$, the classification in the degenerate case, for which the literature is limited even in the real case. A combination of these results and the Hardy-Ramanujan formula shows that both the number of $p$-adic matrix normal forms and the number of local models of $p$-adic integrable systems grow almost exponentially with their dimensions, in strong contrast with the real case. These results fit in a program, proposed a decade ago by Voevodsky, Warren and the second author, to develop a $p$-adic theory of integrable systems with the goal of later implementing it using proof assistants. In the present paper we shall prove all of the statements concerning integrable systems; their proofs rely on the $p$-adic analog of the Weierstrass and Williamson theory of matrices. The proofs concerning this new $p$-adic theory of matrices are of an algebraic and arithmetical nature, and we give them in the sequel paper, part II.
	
	MSC codes: 37J06, 22E35, 53D05, 12F05
\end{abstract}

\section{Introduction and main results about integrable systems}\label{sec:intro}

\subsection{$p$-adic geometry, symplectic geometry and integrable systems}

In a recent lecture \cite{Lurie} Lurie commented that \emph{``roughly speaking $p$-adic geometry, or rigid analytic geometry, is a version of the theory of complex manifolds where instead of using complex numbers you use something like $p$-adic numbers''}. In the present paper we attempt to start studying this direction for the closely related class of symplectic manifolds: we will concentrate on the \emph{local theory} of integrable systems with $p$-adic coefficients, on $p$-adic analytic symplectic manifolds.\footnote{The relation between complex and symplectic structures on manifolds already appears implicitly in the pioneering work of Kodaira \cite{Kodaira} and in a well known paper by Thurston \cite{Thurston}, as well as in many other contributions including for instance \cite{BazMun,Delzant,DuiPel-reduced,DuiPel-complex,FGG}.} The local theory has two crucial applications: first, it is a fundamental ingredient for understanding the behavior of integrable systems near their singularities, and second, it is a stepping stone for understanding their global theory. The local theory is necessary to understand important mechanical systems such as the Jaynes-Cummings model from optics  \cite{PelVuN-spin} (a $p$-adic version of which we studied in \cite{CrePel-JC}) and the coupled angular momenta \cite{LeFPel}.

More concretely, for any prime number $p$, the $p$-adic numbers $\Qp$ form an extension field of the rational numbers $\Q$ which plays a prominent role in various parts of geometry, as seen for instance in the aforementioned recent lecture by Lurie and Scholze-Weinstein's lectures \cite{SchWei}. The present paper takes a first step in introducing $p$-adic methods in symplectic geometry of integrable systems. We focus on $p$-adic matrix theory and $p$-adic symplectic geometry and applying them to fully describe the local theory of $p$-adic integrable systems $F=(f_1,f_2):(M,\omega)\to(\Qp)^2$ on $p$-adic analytic symplectic $4$-manifolds $(M,\omega)$, as part of a general approach to this new field proposed ten years ago by Voevodsky, Warren and the second author \cite[Section 7]{PVW}. Our techniques rely on the theory of $p$-adic extension fields.

When we started this paper we did not expect the local theory of $p$-adic integrable systems to be so rich: the possibilities in the real case represent, in comparison, only a tiny proportion of what occurs in the $p$-adic case (see Table \ref{table:num-integrable} which for example tells us that there are exactly $211$ local models in the case of $2$-adic symplectic $4$-manifolds, while there are exactly $4$ such models with real coefficients). Based on what we have found in this paper and \cite{CrePel-JC}, we believe that \emph{$p$-adic symplectic geometry} and its application to \emph{$p$-adic integrable systems} are promising research directions. Another example of this richness can be seen in the $p$-adic version of the real Jaynes-Cummings model from quantum optics, treated in our previous paper \cite{CrePel-JC}.

We expect that the results of this paper can be formalized using the proof assistant Coq, in the setting of homotopy type theory and Voevodsky's Univalent Foundations, which was the original motivation for Voevodsky, Warren and the second author to formalize the construction of the $p$-adic numbers in this setting in \cite{PVW}. We refer to \cite{APW,PelWar,PelWar2} for an introduction to the Voevodsky's univalence axiom and homotopy type theory.

\subsection{The two goals of this paper and applications}

This paper has two closely related goals. Our first goal is to fully describe the local symplectic geometry of $p$-adic integrable systems on symplectic $4$-manifolds, that is, we explicitly classify their local models and give a concrete list of their formulas. In order to do this, we need to extend the seminal theory of Weierstrass \cite{Weierstrass} and Williamson \cite{Williamson} concerning the diagonalization of real symmetric matrices by means of symplectic matrices, to $p$-adic $4$-by-$4$ matrices, which is the second goal of the paper. That is, in order to prove our theorems about $p$-adic integrable systems we use as stepping stones the theorems about $p$-adic matrices. In the present paper we will (only) state the theorems about $p$-adic matrices and then use them to give full proofs of the theorems concerning $p$-adic integrable systems. But we do not include here proofs of the theorems about $p$-adic matrices, which appear instead in the sequel paper \cite{CrePel-matrix}. These proofs, which are of an algebraic and arithmetical nature, are quite elaborate and better organized as a separate paper.

The Weierstrass-Williamson theory of matrices has crucial applications in many areas including the theory of quantum states in quantum physics \cite{Gosson,SSM,WHTH}, hence this paper provides a new tool to further explore $p$-adic analogues of these applications in symplectic geometry and beyond.

As an application of our results and the Hardy-Ramanujan formula \cite{HarRam} (obtained also by Uspensky \cite{Uspensky}) in number theory, we confirm that the number of $p$-adic $(2n)$-by-$(2n)$ matrix normal forms grows asymptotically at least with ${\rm e}^{\pi \sqrt{2n/3}}/{4\sqrt{3}n}$, which in particular implies that the number of local normal forms of $p$-adic integrable systems on $2n$-dimensional symplectic manifolds at a rank $0$ critical point grows in the same way. This is in strong contrast with the real case, where the number of normal forms of integrable systems at a rank $0$ critical point is quadratic in the dimension.

\subsection{Main results about integrable systems: Theorems \ref{thm:integrable}, \ref{thm:num-integrable} and \ref{thm:num-integrable2}}

Recall that a \emph{quadratic residue modulo $p$} is an integer which is congruent to a perfect square modulo $p$; if this does not hold, then the integer is called a \emph{quadratic non-residue\footnote{For any prime $p>2$, the number of quadratic non-residues modulo $p$ is $(p-1)/2$. For example, if $p=17$, the quadratic non-residues modulo $17$ are $3,5,6,7,10,11,12$ and $14$.} modulo $p$.} Quadratic non-residues play a crucial role in our main theorems, for which we will need the following definition.

\begin{definition}[Non-residue sets and coefficient functions]\label{def:sets}
	\letpprime. If $p\equiv 1\mod 4$, let $c_0$ be the smallest quadratic non-residue modulo $p$. We define the \emph{non-residue sets}
	\[X_p=\begin{cases}
		\{1,c_0,p,c_0p,c_0^2p,c_0^3p,c_0p^2\} & \text{if }p\equiv 1\mod 4; \\
		\{1,-1,p,-p,p^2\} & \text{if }p\equiv 3\mod 4; \\
		\{1,-1,2,-2,3,-3,6,-6,12,-18,24\} & \text{if }p=2.
	\end{cases}\]
	\[Y_p=\begin{cases}
		\{c_0,p,c_0p\} & \text{if }p\equiv 1\mod 4; \\
		\{-1,p,-p\} & \text{if }p\equiv 3\mod 4; \\
		\{-1,2,-2,3,-3,6,-6\} & \text{if }p=2.
	\end{cases}\]
	We also define the \emph{coefficient functions} $\mathcal{C}_i^k:Y_p\times(\Qp)^4\to\Qp$ and $\mathcal{D}_i^k:Y_p\times(\Qp)^4\to\Qp$, for $k\in\{1,2\}$, $i\in\{0,1,2\}$, by
	\[\mathcal{C}_0^1(c,t_1,t_2,a,b)=\frac{ac}{2(c-b^2)},\,\,\mathcal{C}_1^1(c,t_1,t_2,a,b)=\frac{b}{b^2-c},\,\,\mathcal{C}_2^1(c,t_1,t_2,a,b)=\frac{1}{2a(c-b^2)},\]
	\[\mathcal{C}_0^2(c,t_1,t_2,a,b)=\frac{abc}{2(b^2-c)},\,\,\mathcal{C}_1^2(c,t_1,t_2,a,b)=\frac{c}{c-b^2},\,\,\mathcal{C}_2^2(c,t_1,t_2,a,b)=\frac{b}{2a(b^2-c)},\]
	\[\mathcal{D}_0^1(c,t_1,t_2,a,b)=-\frac{t_1+bt_2}{2a},\,\mathcal{D}_1^1(c,t_1,t_2,a,b)=-bt_1-ct_2,\,\mathcal{D}_2^1(c,t_1,t_2,a,b)=-\frac{ac(t_1+bt_2)}{2},\]
	\[\mathcal{D}_0^2(c,t_1,t_2,a,b)=-\frac{bt_1+ct_2}{2a},\,\mathcal{D}_1^2(c,t_1,t_2,a,b)=-c(t_1+bt_2),\,\mathcal{D}_2^2(c,t_1,t_2,a,b)=-\frac{ac(bt_1+ct_2)}{2}.\]
\end{definition}

For a concise review of the basic concepts of $p$-adic integrable systems and $p$-adic symplectic geometry see \cite[Section 3]{CrePel-JC}. In the results below we use the following terminology. \letpprime.

\begin{itemize}
	\item A \emph{$p$-adic analytic symplectic manifold} is a pair $(M,\omega)$ where $M$ is a $p$-adic analytic manifold and $\omega$ is a closed non-degenerate $2$-form on $M$. We say that $\omega$ is a \emph{symplectic form.}
	\item Let $(M_1,\omega_1)$ and $(M_2,\omega_2)$ be $p$-adic analytic symplectic manifolds. Let $m\in M$. A \emph{local symplectomorphism} $\phi:U_1\to U_2$ \emph{centered at $m$} is a $p$-adic analytic diffeomorphism between some open sets $U_1\subset M_1$ and $U_2\subset M_2$, such that $m\in U_1$ and $\phi^*\omega_2=\omega_1$.
	\item Let $(M,\omega)$ be a $p$-adic analytic symplectic manifold. By \emph{local symplectic coordinates $(x_1, \xi_1, \ldots , x_n, \xi_n)$ with the origin a point $m\in M$} we mean coordinates given by a local symplectomorphism  $\phi :  (U\subset M, \omega) \to (\phi(U) \subset (\Qp)^{2n}, \omega_0)$ centered at $m$ such that $\phi(m)=(0, \ldots, 0)$.
	\item A $p$-adic analytic map $F:=(f_1,\ldots,f_n):(M,\omega)\to(\Qp)^n$ on a $p$-adic analytic symplectic manifold of dimension $2n$ is a \emph{$p$-adic analytic integrable system} if $\{f_i,f_j\}=0$ for all $1\le i\le j\le n$ and the set where the $n$ differential $1$-forms $\dd f_1,\ldots,\dd f_n$ are linearly independent is dense in $M$.
\end{itemize}

The notions of $p$-adic analytic function and of critical point of a $p$-adic analytic function are reviewed in Appendix \ref{sec:critical}. For the notion of critical point of $p$-adic integrable system and its rank see Definitions \ref{def:critical}, \ref{def:nondeg-integrable}. The notation $\ocal(3)$ means terms of degree at least $3$. The following two results, Theorems \ref{thm:integrable}, \ref{thm:num-integrable}, concern $p$-adic integrable systems on $p$-adic analytic symplectic $4$-manifolds, hence $n=2$ in the definitions above. On the other hand, Theorem \ref{thm:num-integrable2}, refers to $p$-adic integrable systems on $p$-adic analytic symplectic manifolds of any dimension $2n$.

\begin{figure}
	\begin{tikzpicture}[scale=1.2]
		\fill[yellow] (-3,1.5)--(8,1.5)--(9,2.5)--(-2,2.5);
		\node (toro) at (5,4) {\includegraphics[height=2cm]{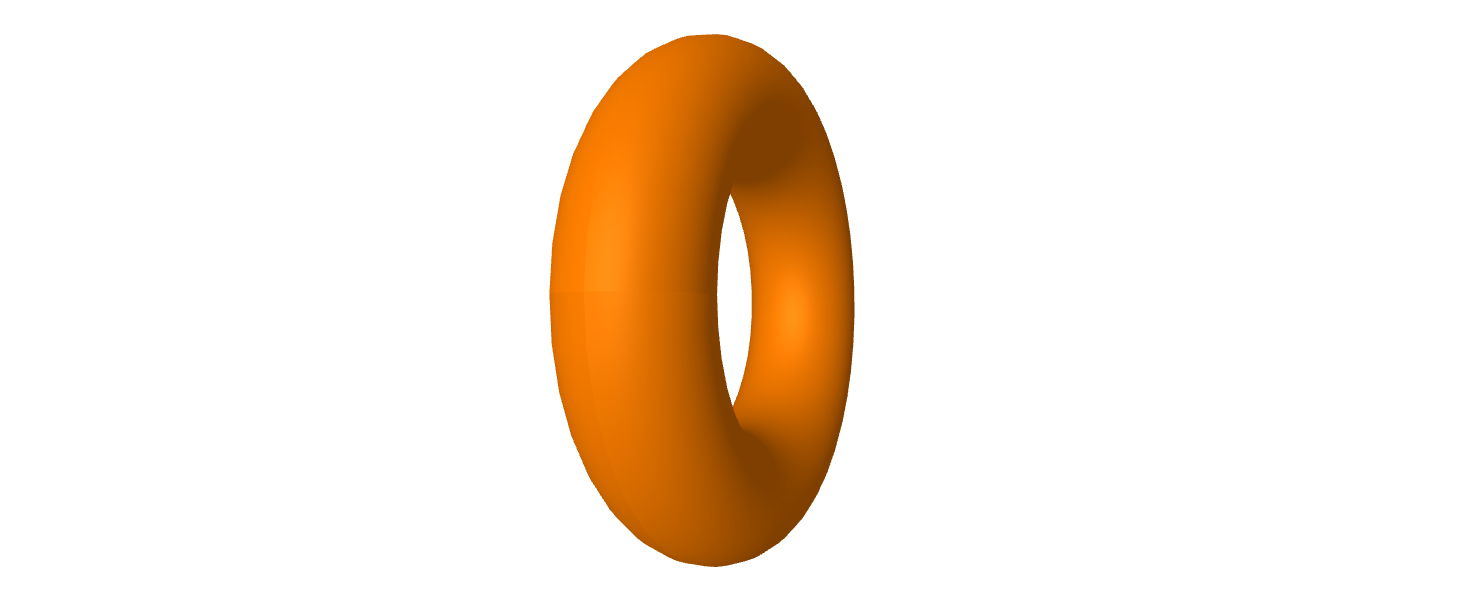}};
		\node (imtoro) at (5,2) {$\times$};
		\draw[->] (toro) -- (5,2.7) node[right] {$(\xi,\eta)$} -- (imtoro.north);
		\draw[dotted] (4.6,3.7)--(4.6,4.3)--(4.8,4.3)--(4.8,3.7)--(4.6,3.7);
		\node (toroap) at (2.5,4) {\includegraphics[height=2cm,trim=20cm 0 20cm 0,clip]{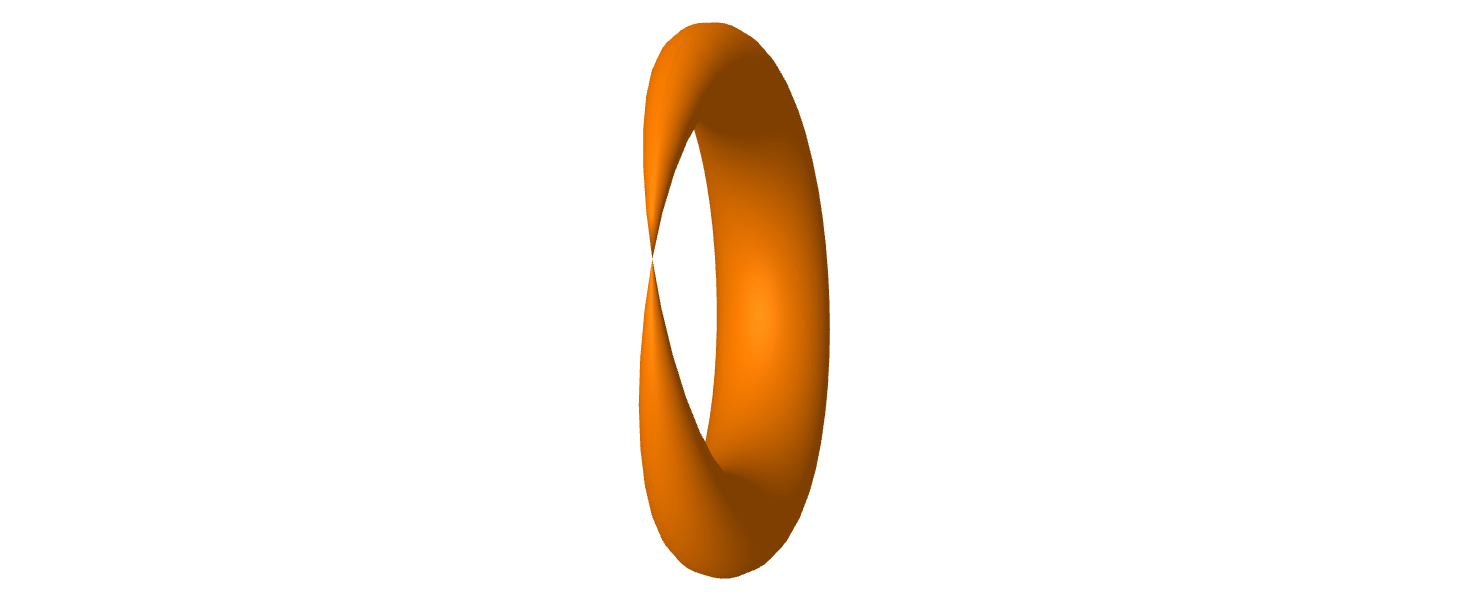}};
		\node (imtoroap) at (2.5,2) {$\times$};
		\draw[->] (toroap) -- (2.5,2.7) node[left] {$(x\eta-y\xi,x\xi+y\eta)$}--(imtoroap.north);
		\draw[dotted] (2.1,3.7)--(2.1,4.3)--(2.3,4.3)--(2.3,3.7)--(2.1,3.7);
		\draw[orange] (7,4) ellipse (0.4cm and 0.8cm);
		\node (imS1) at (7,2) {$\times$};
		\draw[->] (7,3) -- (7,2.7) node[right] {$(\frac{x^2+\xi^2}{2},\eta)$}--(imS1.north);
		\draw[dotted] (6.5,3.7)--(6.5,4.3)--(6.7,4.3)--(6.7,3.7)--(6.5,3.7);
		\fill[orange] (-1,4) circle (0.05);
		\node (impunto) at (-1,2) {$\times$};
		\draw[->] (-1,3.7) -- (-1,2.8) node[left] {$(\frac{x^2+\xi^2}{2},\frac{y^2+\eta^2}{2})$}--(impunto.north);
		\draw[dotted] (-1.1,3.7)--(-1.1,4.3)--(-0.9,4.3)--(-0.9,3.7)--(-1.1,3.7);
		\draw[->] (3.5,4) -- (3.5,3) node[right] {$F$} -- (3.5,2.2);
		\node (R2) at (9.5,2) {$\R^2$};
		\node (R4) at (9.5,4) {$\R^4$};
	\end{tikzpicture}
	\caption{Normal forms of regular and critical points of elliptic-elliptic, focus-focus and elliptic-regular type of an integrable system $F:\R^4\to\R^2$. Some of these can be normal forms of Theorem \ref{thm:integrable} (see Remark \ref{rem:elliptic}).}
	\label{fig:normal-forms}
\end{figure}

\begin{maintheorem}[$p$-adic integrable local models in dimension $4$]\label{thm:integrable}
	\letpprime. Let $X_p, Y_p, \mathcal{C}_i^k, \mathcal{D}_i^k$ be the non-residue sets and coefficient functions in Definition \ref{def:sets}. Let $(M,\omega)$ be a $p$-adic analytic symplectic manifold of dimension $4$ and let $F:(M,\omega)\to(\Qp)^2$ be a $p$-adic analytic integrable system. Let $m$ be a non-degenerate critical point of $F$. Then there exist local symplectic coordinates $(x,\xi,y,\eta)$ with the origin at $m$ and an invertible matrix $B\in\M_2(\Qp)$ such that in these coordinates we have:
	\begin{equation}\label{eq:integrable}
		B\circ(F-F(m))=(g_1,g_2)+\ocal(3),
	\end{equation}
	where the expression of $(g_1,g_2)$ depends on the rank of $m\in\{0,1\}$. If $m$ is a rank $0$ critical point then one of the following situations occurs:
	\begin{enumerate}
		\item There exist $c_1,c_2\in X_p$ such that $g_1(x,\xi,y,\eta)=x^2+c_1\xi^2,g_2(x,\xi,y,\eta)=y^2+c_2\eta^2$;
		\item There exists $c\in Y_p$ such that $g_1(x,\xi,y,\eta)=x\eta+cy\xi,g_2(x,\xi,y,\eta)=x\xi+y\eta$;
		\item There exist $c,t_1$ and $t_2$ corresponding to one row of Table \ref{table:canonical} and $(a,b)\in\{(1,0),(a_1,b_1)\}$, where $(a_1,b_1)$ is given in the row in question, such that
		\[g_k(x,\xi,y,\eta)=\sum_{i=0}^{2}\mathcal{C}_i^k(c,t_1,t_2,a,b)x^iy^{2-i}+\sum_{i=0}^{2}\mathcal{D}_i^k(c,t_1,t_2,a,b)\xi^i\eta^{2-i},\]
		for $k\in\{1,2\}$.
	\end{enumerate}
	Otherwise, if $m$ is a rank $1$ point, then there exists $c\in X_p$ such that $g_1(x,\xi,y,\eta)=x^2+c\xi^2$ and $g_2(x,\xi,y,\eta)=\eta$.
	
	Furthermore, if there are two sets of local symplectic coordinates in which $F$ has one of these forms, then the pair $(g_1,g_2)$ corresponding to the first set of local symplectic coordinates and the pair $(g_1',g_2')$ corresponding to the second set of local symplectic coordinates are in the same case; if it is case (2) or (3), or a rank $1$ point, they coincide, and in case (1) they coincide up to ordering.
\end{maintheorem}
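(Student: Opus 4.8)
The plan is to reduce Theorem~\ref{thm:integrable} to a problem in linear symplectic algebra and then invoke the $p$-adic Weierstrass--Williamson classification of Section~\ref{sec:matrices}. First I would normalize: by a translation we may assume $m=0$ and $F(m)=0$, and since at the linear level every symplectic form on $(\Qp)^4$ is equivalent to $\omega_0$ (the classification of alternating bilinear forms over a field; see Definition~\ref{def:symplectic}), we may choose linear symplectic coordinates in which $\omega=\omega_0$ near $0$. Expanding $F=(f_1,f_2)$ in its $p$-adic Taylor series at $0$, the identity $\{f_1,f_2\}=0$ together with the value $r=\rank(dF_0)\in\{0,1\}$ controls the low-order terms, and non-degeneracy (Definition~\ref{def:nondeg}) is precisely the hypothesis under which a suitable combination $\lambda_1f_1+\lambda_2f_2$ has invertible Hessian. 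All terms of order at least $3$ are absorbed into $\ocal(3)$, so the theorem is a statement about a finite jet; the moves available to bring it to normal form are exactly a linear symplectomorphism (acting by $\mathrm{Sp}(4,\Qp)$-conjugation on the matrix data) together with the matrix $B\in\M_2(\Qp)$ (acting by change of basis on the target $(\Qp)^2$, i.e.\ on the plane spanned by the two components).

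For the rank $0$ case, $dF_0=0$, so the quadratic parts $q_1,q_2$ of $f_1,f_2$ are the leading terms; writing $q_i(v)=\tfrac{1}{2}v^TA_iv$ with $A_i$ symmetric, the assignment $A\mapsto\Omega_0^{-1}A$ identifies the Poisson-commuting $q_1,q_2$ with commuting elements $M_1,M_2\in\mathfrak{sp}(4,\Qp)$, one of which is invertible by non-degeneracy. Applying the $4$-by-$4$ $p$-adic Williamson classification (Theorems~\ref{thm:williamson}--\ref{thm:num-forms}) to the invertible one and then describing its symplectic commutant puts the pair into one of finitely many $\mathrm{Sp}(4,\Qp)\times\mathrm{GL}_2(\Qp)$-normal forms, and the trichotomy in that classification is exactly the three cases: the decomposable case, giving $x^2+c_1\xi^2$ and $y^2+c_2\eta^2$; the indecomposable ``focus--focus'' block, giving $x\eta+cy\xi$ and $x\xi+y\eta$ with $c$ forced into $Y_p$ because the block fails to split exactly when $c$ is a nontrivial square class; and the remaining rows of Table~\ref{table:canonical} (repeated roots or non-semisimple parts), which carry the extra parameter $(a,b)\in\{(1,0),(a_1,b_1)\}$ used to normalize the commuting partner and whose normal forms, written in the basis $(x,\xi,y,\eta)$, are precisely $\sum_i\mathcal{C}_i^k x^iy^{2-i}+\sum_i\mathcal{D}_i^k\xi^i\eta^{2-i}$. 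The functions $\mathcal{C}_i^k,\mathcal{D}_i^k$ are obtained by solving the linear system expressing a generic normal form of each degenerate type in the standard basis --- a direct but lengthy computation. The scalars $c_1,c_2,c$ land in $X_p$ or $Y_p$ because they are defined only up to the rescalings $x\mapsto\lambda^{-1}x,\ \xi\mapsto\lambda\xi$ together with $B$-scaling and the swap $x\leftrightarrow\xi$; the resulting equivalence on $\Qp^*$ is finer than squares and its precise form depends on which Hilbert symbols over $\Qp$ are trivial, which is why $X_p$ and $Y_p$ are given by cases according to $p\bmod 4$, with $p=2$ treated separately (reflecting $|\mathbb{Q}_2^*/(\mathbb{Q}_2^*)^2|=8$); that these sets are complete and irredundant is part of the matrix theorems already cited.

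The rank $1$ case is easier: after a suitable $B$ we may assume $g_2$ has nonzero and $g_1$ zero linear part, a linear symplectomorphism straightens the linear part of $g_2$ to $\eta$, and using $\{g_1,g_2\}=0$ and non-degeneracy one reduces $g_2$ to $\eta$ and confines the relevant part of $g_1$ to the symplectic plane transverse to $\{y,\eta\}$ (symplectic reduction by the linear Hamiltonian $\eta$); the $2$-by-$2$ $p$-adic Williamson classification then gives $g_1=x^2+c\xi^2$ with $c\in X_p$. For uniqueness, two presentations differ by a linear symplectomorphism and an element of $\mathrm{GL}_2(\Qp)$, so the associated data in $\mathfrak{sp}(4,\Qp)$ are $\mathrm{Sp}(4,\Qp)\times\mathrm{GL}_2(\Qp)$-equivalent; the discrete invariants (which case; the rank; the characteristic polynomials and the square classes of their roots; the Jordan type; the row of Table~\ref{table:canonical}) must then agree, and the uniqueness already contained in the matrix theorems pins down the remaining parameters. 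In case~(1) the sole residual ambiguity is the interchange of the two symplectic planes --- the ``up to ordering'' clause.

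I expect the hard part to be two-fold. First, determining exactly which matrix normal forms from the $p$-adic Williamson list occur as linearizations of \emph{non-degenerate} integrable systems and organizing Table~\ref{table:canonical} to be at once complete and non-redundant, which requires ruling out spurious coincidences among the degenerate rows by explicit square-class and Hilbert-symbol computations in $\Qp$ and confirming the exact shape of $X_p$ and $Y_p$. Second, the uniqueness in case~(3): the parameter $(a,b)$ lives in a quotient, and one must show the two representatives $(1,0)$ and $(a_1,b_1)$ are genuinely inequivalent and that no further representative is needed --- this is precisely where the fully degenerate Theorems~\ref{thm:real-degenerate}--\ref{thm:padic-degenerate} fail to yield uniqueness, so the argument here must use the extra rigidity of non-degeneracy, presumably by exhibiting a complete set of $\mathrm{Sp}(4,\Qp)\times\mathrm{GL}_2(\Qp)$-invariants for the finitely many surviving types.
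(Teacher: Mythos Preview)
Your overall architecture is correct and is the paper's: linearize the symplectic form at $m$, pass to the commuting pair of Hessians via $A_i=\Omega_0^{-1}\dd^2 f_i(m)$, use non-degeneracy to put one combination into Williamson normal form by Theorem~\ref{thm:williamson4}, observe that the same $S$ normalizes the other, and absorb the resulting $2\times2$ change of basis into $B$. The rank~$1$ reduction and the uniqueness argument are also as in the paper.

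There is, however, a genuine conceptual error in your description of case~(3). You write that the rows of Table~\ref{table:canonical} correspond to ``repeated roots or non-semisimple parts.'' They do not. All three cases of Theorem~\ref{thm:williamson4}, and hence of Theorem~\ref{thm:integrable}, are \emph{non-degenerate}: the four eigenvalues $\pm\lambda,\pm\mu$ of $\Omega_0^{-1}M$ are pairwise distinct. What distinguishes the cases is the field in which these eigenvalues live. Case~(1) is $\lambda^2,\mu^2\in\Qp$; case~(2) is $\lambda^2\notin\Qp$ but $\lambda\in\Qp[\lambda^2]$ (a degree-$2$ extension suffices); case~(3) is $\lambda^2\notin\Qp$ and $\lambda\notin\Qp[\lambda^2]$, so one needs a tower $\Qp\subsetneq\Qp[\alpha]\subsetneq\Qp[\gamma,\bar\gamma]$ of degree~$4$ (Proposition~\ref{prop:exotic}). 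This case simply does not occur over $\R$ because $\R[\alpha]=\C$ is algebraically closed; over $\Qp$ it is the source of almost all the extra normal forms. The parameters $(c,t_1,t_2)$ encode the extension, and $(a,b)\in\{(1,0),(a_1,b_1)\}$ is a class in $\Qp[\alpha]^*/\DSq(\Qp[\alpha],-\gamma^2)$ intrinsic to a \emph{single} matrix---it is not, as you say, ``used to normalize the commuting partner.''

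Two smaller points. First, non-degeneracy gives a combination whose $\Omega_0^{-1}\dd^2 f$ has \emph{distinct} eigenvalues, not merely an invertible Hessian; this is exactly the hypothesis of Theorem~\ref{thm:williamson4}, and ``invertible'' would only let you invoke Theorem~\ref{thm:williamson4-deg}. Second, the step you package as ``describing its symplectic commutant'' is in the paper the one-line observation that if $A_1$ has distinct eigenvalues and $A_1A_2=A_2A_1$, then $A_2$ preserves each eigenspace and hence shares the eigenvectors of $A_1$; since the parameters of the normal form in Theorem~\ref{thm:williamson4} are determined by the eigenvectors, the \emph{same} symplectic $S$ normalizes both Hessians simultaneously. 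Making this explicit is what turns your sketch into a proof.
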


\begin{table}
	\setlength{\parskip}{16pt}
	\setlength{\abovecaptionskip}{4pt}
	\footnotesize
	\begin{center}
		\begin{tabular}{|c|c|c|c|c|}\hline
			\multicolumn{5}{|c|}{$p\equiv 1\mod 4$} \\ \hline
			$c$ & $t_1$ & $t_2$ & $a_1$ & $b_1$ \\ \hline
			$c_0$ & $p$ & $0$ & $p$ & $1/p$ \\ \cline{2-5}
			& $0$ & $1$ & $p$ & $0$ \\ \cline{2-5}
			& $0$ & $p$ & $p$ & $0$ \\ \hline
			$p$ & $c_0$ & $0$ & $1$ & $1$ \\ \cline{2-5}
			& $0$ & $1$ & $c_0$ & $0$ \\ \cline{2-5}
			& $0$ & $c_0$ & $c_0$ & $0$ \\ \hline
			$c_0p$ & $c_0$ & $0$ & $1$ & $1$ \\ \cline{2-5}
			& $0$ & $1$ & $c_0$ & $0$ \\ \cline{2-5}
			& $0$ & $c_0$ & $c_0$ & $0$ \\ \hline
			\multicolumn{5}{|c|}{$p\equiv 3\mod 4$} \\ \hline
			$c$ & $t_1$ & $t_2$ & $a_1$ & $b_1$ \\ \hline
			$-1$ & $p$ & $0$ & $b_0$ & $a_0/b_0$ \\ \cline{2-5}
			& $a_0$ & $b_0$ & $p$ & $0$ \\ \cline{2-5}
			& $pa_0$ & $pb_0$ & $p$ & $0$ \\ \hline
			$p$ & $-1$ & $0$ & $1$ & $1$ \\ \cline{2-5}
			& $0$ & $1$ & $-1$ & $0$ \\ \hline
			$-p$ & $-1$ & $0$ & $1$ & $1$ \\ \cline{2-5}
			& $0$ & $1$ & $-1$ & $0$ \\ \hline
		\end{tabular}
		
		\begin{tabular}{|c|c|c|c|}\hline
			\multicolumn{4}{|c|}{$p=2\wedge c=-1$} \\ \hline
			$t_1$ & $t_2$ & $a_1$ & $b_1$ \\ \hline
			$2$ & $0$ & $1$ & $2$ \\ \hline
			$3$ & $0$ & $1$ & $1$ \\ \hline
			$6$ & $0$ & $1$ & $1$ \\ \hline
			$1$ & $1$ & $3$ & $0$ \\ \hline
			$3$ & $3$ & $3$ & $0$ \\ \hline
			$1$ & $2$ & $2$ & $0$ \\ \hline
			$2$ & $4$ & $2$ & $0$ \\ \hline
			$-1$ & $3$ & $2$ & $0$ \\ \hline
			$-2$ & $6$ & $2$ & $0$ \\ \hline
		\end{tabular}\qquad
		\begin{tabular}{|c|c|c|c|}\hline
			\multicolumn{4}{|c|}{$p=2\wedge c=2$} \\ \hline
			$t_1$ & $t_2$ & $a_1$ & $b_1$ \\ \hline
			$-1$ & $0$ & $1$ & $2$ \\ \hline
			$3$ & $0$ & $1$ & $1$ \\ \hline
			$-3$ & $0$ & $1$ & $1$ \\ \hline
			$0$ & $1$ & $-1$ & $0$ \\ \hline
			$0$ & $3$ & $-1$ & $0$ \\ \hline
			$1$ & $1$ & $-1$ & $0$ \\ \hline
			$3$ & $3$ & $-1$ & $0$ \\ \hline
			$2$ & $1$ & $3$ & $0$ \\ \hline
			$-2$ & $-1$ & $3$ & $0$ \\ \hline
			$6$ & $3$ & $3$ & $0$ \\ \hline
			$-6$ & $-3$ & $3$ & $0$ \\ \hline
		\end{tabular}\qquad
		\begin{tabular}{|c|c|c|c|}\hline
			\multicolumn{4}{|c|}{$p=2\wedge c=-2$} \\ \hline
			$t_1$ & $t_2$ & $a_1$ & $b_1$ \\ \hline
			$-1$ & $0$ & $1$ & $1$ \\ \hline
			$3$ & $0$ & $1$ & $-2$ \\ \hline
			$-3$ & $0$ & $1$ & $1$ \\ \hline
			$0$ & $1$ & $3$ & $0$ \\ \hline
			$0$ & $3$ & $3$ & $0$ \\ \hline
			$1$ & $1$ & $-1$ & $0$ \\ \hline
			$-1$ & $-1$ & $-1$ & $0$ \\ \hline
			$-2$ & $1$ & $-1$ & $0$ \\ \hline
			$2$ & $-1$ & $-1$ & $0$ \\ \hline
		\end{tabular}\qquad
		\begin{tabular}{|c|c|c|c|}\hline
			\multicolumn{4}{|c|}{$p=2\wedge c=3$} \\ \hline
			$t_1$ & $t_2$ & $a_1$ & $b_1$ \\ \hline
			$-1$ & $0$ & $1$ & $1$ \\ \hline
			$2$ & $0$ & $1$ & $1$ \\ \hline
			$-2$ & $0$ & $1$ & $3$ \\ \hline
			$0$ & $1$ & $2$ & $0$ \\ \hline
			$0$ & $2$ & $2$ & $0$ \\ \hline
			$1$ & $1$ & $-1$ & $0$ \\ \hline
			$-1$ & $-1$ & $-1$ & $0$ \\ \hline
			$3$ & $1$ & $-1$ & $0$ \\ \hline
			$-3$ & $-1$ & $-1$ & $0$ \\ \hline
		\end{tabular}
		
		\begin{tabular}{|c|c|c|c|}\hline
			\multicolumn{4}{|c|}{$p=2\wedge c=-3$} \\ \hline
			$t_1$ & $t_2$ & $a_1$ & $b_1$ \\ \hline
			$-1$ & $0$ & $2$ & $1/2$ \\ \hline
			$2$ & $0$ & $2$ & $1/2$ \\ \hline
			$-2$ & $0$ & $1$ & $-6$ \\ \hline
			$0$ & $1$ & $-1$ & $0$ \\ \hline
			$0$ & $2$ & $-1$ & $0$ \\ \hline
			$1$ & $2$ & $2$ & $0$ \\ \hline
			$-1$ & $-2$ & $2$ & $0$ \\ \hline
			$2$ & $4$ & $2$ & $0$ \\ \hline
			$-2$ & $-4$ & $2$ & $0$ \\ \hline
			$-6$ & $1$ & $-1$ & $0$ \\ \hline
			$-12$ & $2$ & $-1$ & $0$ \\ \hline
		\end{tabular}\qquad
		\begin{tabular}{|c|c|c|c|}\hline
			\multicolumn{4}{|c|}{$p=2\wedge c=6$} \\ \hline
			$t_1$ & $t_2$ & $a_1$ & $b_1$ \\ \hline
			$-1$ & $0$ & $1$ & $1$ \\ \hline
			$3$ & $0$ & $1$ & $6$ \\ \hline
			$-3$ & $0$ & $1$ & $1$ \\ \hline
			$0$ & $1$ & $3$ & $0$ \\ \hline
			$0$ & $3$ & $3$ & $0$ \\ \hline
			$1$ & $1$ & $-1$ & $0$ \\ \hline
			$-1$ & $-1$ & $-1$ & $0$ \\ \hline
			$6$ & $1$ & $-1$ & $0$ \\ \hline
			$-6$ & $-1$ & $-1$ & $0$ \\ \hline
		\end{tabular}\qquad
		\begin{tabular}{|c|c|c|c|}\hline
			\multicolumn{4}{|c|}{$p=2\wedge c=-6$} \\ \hline
			$t_1$ & $t_2$ & $a_1$ & $b_1$ \\ \hline
			$-1$ & $0$ & $1$ & $-6$ \\ \hline
			$3$ & $0$ & $1$ & $1$ \\ \hline
			$-3$ & $0$ & $1$ & $1$ \\ \hline
			$0$ & $1$ & $-1$ & $0$ \\ \hline
			$0$ & $3$ & $-1$ & $0$ \\ \hline
			$1$ & $1$ & $-1$ & $0$ \\ \hline
			$3$ & $3$ & $-1$ & $0$ \\ \hline
			$-6$ & $1$ & $3$ & $0$ \\ \hline
			$6$ & $-1$ & $3$ & $0$ \\ \hline
			$-18$ & $3$ & $3$ & $0$ \\ \hline
			$18$ & $-3$ & $3$ & $0$ \\ \hline
		\end{tabular}
	\end{center}
	\caption{Parameters for the normal form (3) of Theorem \ref{thm:integrable}.
		In the table, for $p\equiv 1\mod 4$, $c_0$ is the smallest quadratic non-residue modulo $p$. For $p\equiv 3\mod 4$, $a_0$ and $b_0$ are such that $a_0^2+b_0^2\equiv -1\mod p$. For $p=2$ there are many more possible parameters, and they are separated by the value of $c$.}
	\label{table:canonical}
\end{table}

\begin{remark}
	We have a result analogous to Theorem \ref{thm:integrable} about degenerate systems (Theorems \ref{thm:real-degenerate} and \ref{thm:padic-degenerate}), though we have not been able to prove uniqueness in that case (that is, to determine which models are equivalent to each other). We refer to Figures \ref{fig:fibers1} and \ref{fig:fibers2} for a depiction of some of the cases covered by Theorem \ref{thm:integrable} and to Figure \ref{fig:normal-forms} for an illustration of the real case.
\end{remark}

The following statement gives a precise count of the normal forms appearing in 
Theorem \ref{thm:integrable}.

\begin{maintheorem}[Number of $p$-adic integrable local models, in dimension $4$]\label{thm:num-integrable}
	\letpprime. Let $X_p, Y_p, \mathcal{C}_i^k, \mathcal{D}_i^k$ be the non-residue sets and coefficient functions in Definition \ref{def:sets}. Let $(M,\omega)$ be a $p$-adic analytic symplectic $4$-manifold. Then the following statements hold:
	\begin{enumerate}
		\item If $p\equiv 1\mod 4$, there are exactly $49$ normal forms for a rank $0$ non-degenerate critical point, and exactly $7$ normal forms of a rank $1$ non-degenerate critical point, of a $p$-adic analytic integrable system $F:(M,\omega)\to(\Qp)^2$ up to local symplectomorphisms centered at the critical point;
		\item If $p\equiv 3\mod 4$, there are exactly $32$ normal forms for a rank $0$ non-degenerate critical point, and exactly $5$ normal forms of a rank $1$ non-degenerate critical point, of a $p$-adic analytic integrable system $F:(M,\omega)\to(\Qp)^2$ up to local symplectomorphisms centered at the critical point;
		\item If $p=2$, there are exactly $211$ normal forms for a rank $0$ non-degenerate critical point, and exactly $11$ normal forms of a rank $1$ non-degenerate critical point, of a $p$-adic analytic integrable system $F:(M,\omega)\to(\Qp)^2$ up to local symplectomorphisms centered at the critical point.
	\end{enumerate}
	
	In the three cases above, the normal forms for a rank $0$ point are given by
	\[\Big\{(x^2+c_1\xi^2,y^2+c_2\eta^2):c_1,c_2\in X_p\Big\}
	\cup\Big\{(x\eta+cy\xi,x\xi+y\eta):c\in Y_p\Big\}\]
	\[\cup\Big\{\Big(\sum_{i=0}^{2}\mathcal{C}_i^1(c,t_1,t_2,a,b)x^iy^{2-i}+\sum_{i=0}^{2}\mathcal{D}_i^1(c,t_1,t_2,a,b)\xi^i\eta^{2-i},\]\[\sum_{i=0}^{2}\mathcal{C}_i^2(c,t_1,t_2,a,b)x^iy^{2-i}+\sum_{i=0}^{2}\mathcal{D}_i^2(c,t_1,t_2,a,b)\xi^i\eta^{2-i}\Big):\]\[(a,b)\in\Big\{(1,0),(a_1,b_1)\Big\},c,t_1,t_2,a_1,b_1\text{ in one row of Table \ref{table:canonical}}\Big\}\]
	and those for a rank $1$ point are given by
	\[\Big\{(x^2+c\xi^2,\eta):c\in X_p\Big\}.\]
\end{maintheorem}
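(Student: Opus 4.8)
The plan is to deduce Theorem~\ref{thm:num-integrable} entirely from Theorem~\ref{thm:integrable}, so that it becomes a finite counting problem. Two inputs from Theorem~\ref{thm:integrable} are needed: first, that every non-degenerate critical point of rank $0$ (resp.\ rank $1$) has a normal form in the explicit list displayed in the statement; and second, the ``furthermore'' clause, which says that two listed forms arise from the same system precisely when they lie in the same case and, in cases~(2) and~(3) and for a rank $1$ point, are literally equal, while in case~(1) they agree up to swapping the two coordinate pairs. I also need the converse: each listed pair $(g_1,g_2)$ genuinely occurs, i.e.\ viewed as an integrable system on $((\Qp)^4,\omega_0)$ its origin is a non-degenerate critical point of the prescribed rank --- this is checked directly from the formulas and is essentially the computation that produced the list. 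Granting these, the set of local linear models at a rank $0$ (resp.\ rank $1$) point is exactly the explicit list modulo linear symplectomorphism, and the ``furthermore'' clause makes that equivalence completely transparent, so only the bookkeeping remains.

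For rank $0$ the three families of Theorem~\ref{thm:integrable} are pairwise inequivalent (being in the same case is necessary for equivalence), so the number of models is a sum of three contributions. Case~(1) contributes the number of unordered pairs $\{c_1,c_2\}$ with $c_i\in X_p$ --- distinct elements of $X_p$ give distinct forms since $x^2+c\xi^2$ determines $c$, and two case~(1) forms are equivalent exactly when these unordered pairs coincide --- so it contributes $\binom{|X_p|+1}{2}$, namely $28$, $15$, $66$ according as $p\equiv1\bmod4$, $p\equiv3\bmod4$, $p=2$. Case~(2) contributes $|Y_p|=3,\,3,\,7$, because the coefficient of $y\xi$ recovers $c$. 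Case~(3) contributes twice the number of rows of Table~\ref{table:canonical}, namely $2\cdot9=18$, $2\cdot7=14$, $2\cdot69=138$. Adding up gives $28+3+18=49$, $15+3+14=32$, $66+7+138=211$. For a rank $1$ point the models are the forms $(x^2+c\xi^2,\eta)$, $c\in X_p$, pairwise inequivalent, so the count is $|X_p|=7,\,5,\,11$.

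The step with genuine content is the case~(3) count, where I must check that the assignment $(c,t_1,t_2,a,b)\mapsto(g_1,g_2)$ is injective on the parameter set of the theorem; equivalently, since distinct listed case~(3) functions give distinct models, that no two of these functions coincide. I would read off invariants from the coefficient functions of Definition~\ref{def:sets}. For $(a,b)=(1,0)$ one gets $\mathcal{C}_0^1=\tfrac12$, $\mathcal{C}_1^1=0$, $\mathcal{C}_2^1=\tfrac1{2c}$, $\mathcal{D}_0^1=-\tfrac{t_1}{2}$, $\mathcal{D}_1^1=-ct_2$, so $g_1$ determines $c$, $t_1$ and $t_2$; since distinct rows of Table~\ref{table:canonical} carry distinct triples $(c,t_1,t_2)$, all the $(a,b)=(1,0)$ forms are distinct. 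For $(a,b)=(a_1,b_1)$ one has $\mathcal{C}_1^1=b_1/(b_1^2-c)$, nonzero iff $b_1\neq0$, and $\mathcal{C}_0^1=a_1/2$ when $b_1=0$; as every row has $(a_1,b_1)\neq(1,0)$, each such form is distinguished from all $(a,b)=(1,0)$ forms, and reading off $(c,t_1,t_2,a_1,b_1)$ separates them among themselves. The remaining points --- disjointness of the three cases and non-degeneracy of each listed model --- are immediate, so the main obstacle is really the verification underlying Table~\ref{table:canonical} together with this injectivity check; the counting itself then follows mechanically from Theorem~\ref{thm:integrable}.
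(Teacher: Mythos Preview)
Your argument is correct and is essentially the paper's own approach: the paper derives Theorem~\ref{thm:num-integrable} from the matrix count Theorem~\ref{thm:num-forms1}, whose proof is precisely the computation $\binom{|X_p|+1}{2}+|Y_p|+2\cdot(\text{rows of Table~\ref{table:canonical}})$ that you carry out, and the rank~$1$ count is $|X_p|$. The only mild difference is that you verify the case~(3) injectivity directly from the coefficient functions in Definition~\ref{def:sets}, whereas the paper obtains it from the uniqueness clause of Theorem~\ref{thm:williamson4} (different parameters $(c,t_1,t_2)$ force different field extensions, and different $(a,b)$ are separated via $\DSq(\Qp[\alpha],-\gamma^2)$); both routes give the same conclusion.
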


\begin{figure}
	\begin{tikzpicture}
		\node at (0,0) {\includegraphics[trim=5cm 1cm 5cm 1cm,scale=0.7,clip]{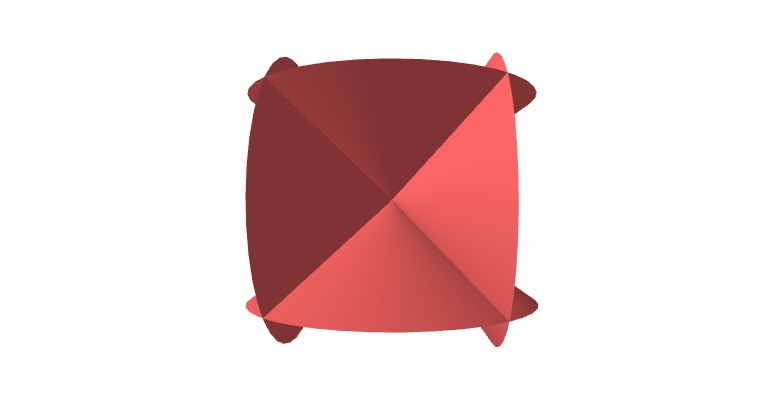}};
		\node at (0,4) {$x=y=0$};
		\node at (0,-4) {$\xi=\eta=0$};
		\node[right] at (3.4,0) {$y=\ii x,\eta=\ii \xi$};
		\node[left] at (-3.6,0) {$y=-\ii x,\eta=-\ii\xi$};
	\end{tikzpicture}
	\caption{Symbolic representation of $2$-dimensional fiber of focus-focus model if $p\equiv 1\mod 4$, as a case of point (1) of Theorem \ref{thm:integrable}, which coincides with the elliptic-elliptic model. The four ``cones'' are $2$-dimensional planes in $4$-dimensional space.}
	\label{fig:fibers1}
\end{figure}

\begin{figure}
	\begin{tikzpicture}
		\node at (0,0) {\includegraphics[trim=5cm 0cm 5cm 1cm,scale=0.7,clip]{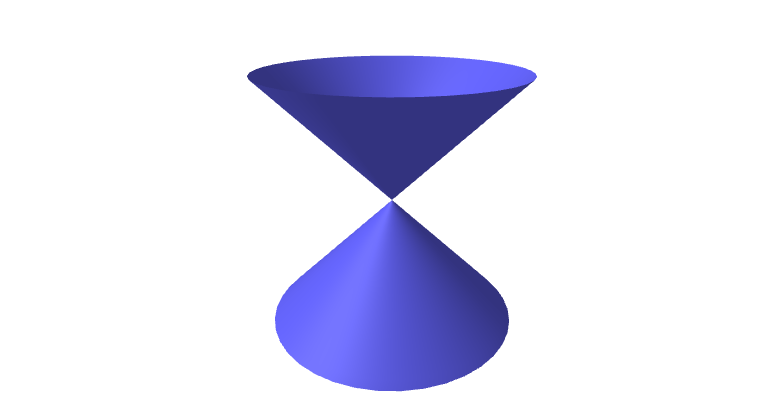}};
		\node at (0,4.5) {$x=y=0$};
		\node at (0,-5) {$\xi=\eta=0$};
	\end{tikzpicture}
	\caption{Symbolic representation of $2$-dimensional fiber of focus-focus model if $p\not\equiv 1\mod 4$, as a case of point (2) of Theorem \ref{thm:integrable}, which coincides with the fiber in the real case. The two ``cones'' are actually $2$-dimensional planes in $4$-dimensional space that meet at a point.}
	\label{fig:fibers2}
\end{figure}

\begin{remark}
	In the real case, there are exactly $4$ normal forms for a rank $0$ non-degenerate critical point:
	\[\Big\{(x^2+\xi^2,y^2+\eta^2),(x^2+\xi^2,y\eta),(x\xi,y\eta),(x\eta-y\xi,x\xi+y\eta)\Big\}\]
	and exactly $2$ normal forms for a rank $1$ non-degenerate critical point:
	\[\Big\{(x^2+\xi^2,\eta),(x\xi,\eta)\Big\}.\]
\end{remark}

\begin{remark}\label{rem:elliptic}
	In the $p$-adic category, the elliptic-elliptic and elliptic-regular points are normal forms of Theorem \ref{thm:integrable}: the former is a rank $0$ point in case (1) with $c_1=c_2=1$, and the latter is a rank $1$ point with $c=1$. The focus-focus point may also appear as a normal form, in case (2) with $c=-1$, but only if $-1\in Y_p$, which happens if $p\not\equiv 1\mod 4$. Actually, if $p\equiv 1\mod 4$, the normal form of a focus-focus point is elliptic-elliptic; hence, the focus-focus and elliptic-elliptic points are locally symplectomorphic if and only if $p\equiv 1\mod 4$.
\end{remark}

\begin{definition}\label{def:f}
	\letpprime\ and let $n$ be a positive integer. To each partition $P=(a_1,\ldots,a_k)$ of $n$ we associate the function
	\[f_{P,p}(x_1,\xi_1,\ldots,x_n,\xi_n)=\sum_{i=1}^k\left(\frac{x_{b_{i-1}+1}^2}{2}+\sum_{j=b_{i-1}+1}^{b_i-1}\xi_j x_{j+1}+\frac{p\xi_{b_i}^2}{2}\right),\]
	where $b_0=0$ and $b_i=\sum_{j=0}^{i}a_j$, for $i\in\{1,\ldots,k\}$.
\end{definition}

Note that, for every partition $P$, $f_{P,p}$ has a critical point at the origin.

\begin{maintheorem}[Number of $p$-adic integrable local models, arbitrary dimension]\label{thm:num-integrable2}
	Let $n$ be a positive integer. \letpprime. The number of local normal forms of $p$-adic analytic integrable systems on $2n$-dimensional $p$-adic analytic symplectic manifolds at a rank $0$ non-degenerate critical point, up to local symplectomorphisms centered at the critical point, grows asymptotically at least with
	\[\frac{\e^{\pi\sqrt{2n/3}}}{4n\sqrt{3}}.\]
	Explicitly, for any two partitions $P$ and $Q$ of $n$, any two $p$-adic analytic integrable systems on a $2n$-dimensional $p$-adic analytic manifold containing $f_{P,p}$ and $f_{Q,p}$, respectively, as a component, are not equivalent by local symplectomorphisms centered at the origin, where $f_{P,p}$ is as given in Definition \ref{def:f}.
\end{maintheorem}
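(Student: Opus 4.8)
The plan is to reduce the statement to a counting problem for conjugacy classes of Cartan subalgebras of $\mathfrak{sp}(2n,\Qp)$, to attach to each partition $P$ of $n$ such a class in an $\mathrm{Sp}(2n,\Qp)$-invariant fashion, and then to invoke the Hardy--Ramanujan asymptotics for the partition function.

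First I would use the theory behind Definition~\ref{def:nondeg}: at a non-degenerate rank $0$ critical point $m$ of an integrable system $F=(f_1,\dots,f_n)$ on a $2n$-dimensional symplectic manifold, once each Hessian $\dd^2 f_i$ is identified with an element $X_i$ of $\mathfrak{sp}(2n,\Qp)$ via the symplectic form, the subspace $\mathfrak{h}_F=\Span_{\Qp}(X_1,\dots,X_n)$ is a Cartan subalgebra of $\mathfrak{sp}(2n,\Qp)$, and two such germs are equivalent by a local linear symplectomorphism centered at the critical point exactly when $\mathfrak{h}_F$ and $\mathfrak{h}_{F'}$ are conjugate under $\mathrm{Sp}(2n,\Qp)$ (post-composing with $B\in\mathrm{GL}_n(\Qp)$ is merely a change of unordered basis inside $\mathfrak{h}_F$). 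Since conjugation is a $\Qp$-algebra automorphism of $\mathrm{End}(\Qp^{2n})$, the commutative \'etale subalgebra $A_F:=\Qp[\mathfrak{h}_F]\subseteq\mathrm{End}(\Qp^{2n})$, of dimension $2n$, together with its involution $\sigma$ given by symplectic adjunction (so that $\dim_{\Qp}A_F^{\sigma=-1}=n$), is an invariant of the equivalence class --- this is the $\Qp$-analogue of the Williamson normal-form data, cf.\ Theorems~\ref{thm:williamson}--\ref{thm:num-forms}. It therefore suffices to exhibit $p(n)$ equivalence classes whose algebras $A_F$ are pairwise non-isomorphic as $\Qp$-algebras, one for each partition of $n$, whose Cartan subalgebra contains the operator attached to the corresponding $f_{P,p}$.

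Next I would analyze a single block of size $a$, that is $\tfrac{x_1^2}{2}+\sum_{j=1}^{a-1}\xi_j x_{j+1}+\tfrac{p\xi_a^2}{2}$: a direct computation with Hamilton's equations gives $x_1^{(2a)}=(-1)^a p\,x_1$, so the characteristic polynomial of the associated operator $L$ is $\lambda^{2a}+(-1)^{a+1}p$, which (the block being cyclic) is also its minimal polynomial. This polynomial is Eisenstein at $p$, hence irreducible, so $K_a:=\Qp[\lambda]/(\lambda^{2a}+(-1)^{a+1}p)$ is a totally ramified extension of $\Qp$ of degree $2a$ and $L$ is semisimple, isomorphic to multiplication by the uniformizer $\zeta_a:=\lambda\bmod(\lambda^{2a}+(-1)^{a+1}p)$ on $K_a$. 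Since $\zeta_a^{2a}=(-1)^a p$, the map $\zeta_a\mapsto-\zeta_a$ is an involution of $K_a$ over the degree-$a$ subfield $\Qp(\zeta_a^2)$, under which $\zeta_a$ is anti-invariant. Hence, for $P=(a_1,\dots,a_k)$, the operator $L_P$ attached to $f_{P,p}$ is semisimple and $\Qp^{2n}\cong\bigoplus_{i=1}^k K_{a_i}$ as a $\Qp[L_P]$-module. Then, for each partition $P$ of $n$, I would set $A_P:=\prod_{i=1}^k K_{a_i}$ (a $\Qp$-algebra of dimension $\sum_i 2a_i=2n$), with involution $\sigma_P$ acting as $\zeta_{a_i}\mapsto-\zeta_{a_i}$ on each factor, and let $\mathfrak{h}_P:=A_P^{\sigma_P=-1}$ act on $\Qp^{2n}=A_P$ by multiplication: for the symplectic form $(x,y)\mapsto\mathrm{Tr}_{A_P/\Qp}(\mu\,x\,\sigma_P(y))$ with $\mu\in A_P$, $\sigma_P(\mu)=-\mu$ (any nondegenerate alternating form on $\Qp^{2n}$ being equivalent to the standard one), $\mathfrak{h}_P$ is a Cartan subalgebra of $\mathfrak{sp}(2n,\Qp)$, it contains $L_P=(\zeta_{a_1},\dots,\zeta_{a_k})$, and a short computation with the idempotents of $A_P$ gives $\Qp[\mathfrak{h}_P]=A_P$.

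Thus the integrable system whose rank $0$ linear model has $f_{P,p}$ as one component and a $\Qp$-basis of $\mathfrak{h}_P$ containing $L_P$ as its Hamiltonians realizes $f_{P,p}$ as a component of a local linear normal form, with associated algebra $A_P$. If for partitions $P$ and $Q$ two such systems were equivalent, then $A_P\cong A_Q$ as $\Qp$-algebras; since an \'etale $\Qp$-algebra decomposes uniquely into a product of fields and $[K_a:\Qp]=2a$ determines $a$, the multisets of parts of $P$ and $Q$ would coincide, i.e.\ $P=Q$. This produces $p(n)$ pairwise inequivalent rank $0$ local linear normal forms, and by the Hardy--Ramanujan formula \cite{HarRam,Uspensky} one has $p(n)\sim \e^{\pi\sqrt{2n/3}}/(4n\sqrt{3})$, which gives the asymptotic lower bound. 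For the sharper assertion that \emph{every} integrable system having $f_{P,p}$ as a component of some local linear normal form is inequivalent to \emph{every} one having $f_{Q,p}$ whenever $P\neq Q$, I would prove that having $f_{P,p}$ as such a component forces the associated \'etale algebra to be exactly $A_P=\prod_i K_{a_i}$; equivalently, that whenever a Cartan subalgebra of $\mathfrak{sp}(2n,\Qp)$ contains an $\mathrm{Sp}(2n,\Qp)$-conjugate of $L_P$, no field factor $E\supsetneq K_a$ can occur.

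I expect this last point to be the main obstacle. When $P$ has repeated parts the operator $L_P$ is not regular, so the Cartan subalgebra containing it is not determined by $L_P$, and one must rule out that the $\mathrm{Sp}(2n,\Qp)$-conjugacy class of $L_P$ --- which, among semisimple operators with characteristic polynomial $\prod_i(\lambda^{2a_i}+(-1)^{a_i+1}p)$, is pinned down by the Hermitian forms it induces over the fields $K_{a_i}$ --- can be realized inside a strictly larger \'etale algebra (for instance, for $P=(1,1)$, inside a quartic field containing $\Qp(\sqrt{-p})$). This is exactly where the specific shape of $f_{P,p}$ must be used, through the ramification of the extensions $\Qp[\lambda]/(\lambda^{2a}+(-1)^{a+1}p)$ and the behaviour of norms from them; for the asymptotic growth statement alone, however, only the construction of the preceding paragraphs is required.
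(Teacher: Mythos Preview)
Your argument for the asymptotic lower bound is correct, and your computation of the block characteristic polynomial $\lambda^{2a}+(-1)^{a+1}p$ (Eisenstein at $p$, hence irreducible) is right. The paper's proof is considerably shorter and does not pass through Cartan subalgebras or \'etale algebras: it reduces Theorem~\ref{thm:num-integrable2} to the matrix statement Theorem~\ref{thm:num-forms} by observing that the Hessian of $f_{P,p}$ is the block matrix $M(P,p)$ of Definition~\ref{def:M}, and then proves Theorem~\ref{thm:num-forms} by combining Lemma~\ref{lemma:irred} (Eisenstein irreducibility gives at least one block type of each even size) with the fact, implicit in Proposition~\ref{prop:symplectomorphic}, that symplectically equivalent symmetric matrices yield the same Jordan form for $\Omega_0^{-1}M$. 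Since $\Omega_0^{-1}M(P,p)$ is semisimple with characteristic polynomial $\prod_i(\lambda^{2a_i}+(-1)^{a_i+1}p)$, the multiset of degrees of its irreducible factors recovers $P$, so distinct partitions give inequivalent matrices; Hardy--Ramanujan then gives the bound. Your \'etale algebra $A_P=\prod_i K_{a_i}$ encodes exactly this factorization data, so the two arguments coincide in substance --- yours is more conceptual (the invariant is attached to the Cartan subalgebra rather than to a chosen generator), the paper's more direct.

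You are right that the explicit clause --- inequivalence of \emph{any} system containing $f_{P,p}$ with \emph{any} system containing $f_{Q,p}$ --- is more delicate when $P$ has repeated parts, since then $L_P$ is not regular and could in principle lie in non-conjugate Cartan subalgebras with different \'etale algebras. The paper's proof of Theorem~\ref{thm:num-integrable2} is a single sentence (``follows from applying Theorem~\ref{thm:num-forms} to the Hessians'') and does not engage with this point; it is best read as asserting inequivalence of the specific systems built from the block-diagonal Cartans $\mathfrak h_P$, for which the issue does not arise. Under the stronger reading you propose, neither your outline nor the paper's argument closes the gap.
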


\begin{table}[h]
	\begin{tabular}{|*{6}{c|}}\hline
		$2n$ & $\R$ & $\Q_2$ & $\Q_3$ & $\Q_5$ & $\Q_7$ \\ \hline
		2 & 2 & 11 & 5 & 7 & 5 \\ \hline
		4 & 4 & 211 & 32 & 49 & 32 \\ \hline
		6 & 6 & 1883 & 123 & 234 & 129 \\ \hline
		8 & 9 & 21179 & 495 & 1054 & 525 \\ \hline
		10 & 12 & 161343 & 1595 & 4021 & 1787 \\ \hline
	\end{tabular}
	\caption{The number of families of normal forms of integrable systems on $\R^{2n}$ and $(\Qp)^{2n}$ at a rank $0$ critical point. Data is extracted from Theorem \ref{thm:num-forms-lower-bound}. For the $p$-adic case in dimension greater than $4$, the numbers are only lower bounds. The actual number of forms might be even larger.}
	\label{table:num-integrable}
\end{table}

\begin{remark}
	These results indicate that a global theory of $p$-adic integrable systems, which will probably be based on gluing local models, will include a large number of phenomena which do not occur in the real case. We have computed explicit lower bounds of the number of normal forms in Table \ref{table:num-integrable}.
\end{remark}

\begin{remark}
	The Weierstrass-Williamson theory of matrices has crucial applications in many areas including the theory of quantum states in quantum physics \cite{Gosson,SSM,WHTH}, hence this paper provides a new tool to further explore $p$-adic analogues of these applications in symplectic geometry and beyond.
\end{remark}

The field of $p$-adic geometry is extensive, see \cite{Schneider,SchWei} and the references therein. We recommend that readers consult Zelenov's article \cite{Zelenov} for a construction involving the $p$-adic numbers in the context of symplectic vector spaces, the $p$-adic Heisenberg group and the Maslov index; see also the article by Hu-Hu \cite{HuHu}. $p$-adic geometry is also fundamental in mathematical physics and the theory of integrable systems, see for example \cite{CrePel-JC,Dragovich-quantum,Dragovich-harmonic,DKKV,GKPSW,PVW,VlaVol}. For an introduction to different aspects of symplectic geometry, including its relations to mechanics and Poisson geometry, we refer to the survey articles \cite{Eliashberg,Pelayo-hamiltonian,Weinstein-symplectic,Weinstein-Poisson} and the books \cite{BatWei,Cannas,HofZeh,MarRat,McDSal,OrtRat}. We believe that the ideas concerning local normal forms in this paper, in conjunction with \cite{GLSS}, could help establish a classification of line bundles over $p$-adic analytic symplectic manifolds.

\subsection*{Structure of the paper}

In Section \ref{sec:matrices} we state the other main results of the paper, which concern matrices and are proved in \cite{CrePel-matrix}. Section \ref{sec:algclosed} studies the problem of the symplectic classification of matrices in an algebraically closed field, where it is much simpler to give a classification. Section \ref{sec:singularities} formulates these theorems concerning matrices in terms of non-degeneracy of critical points of functions. Section \ref{sec:integrable} applies the work of the previous section to integrable systems. Section \ref{sec:JC} applies the results about integrable systems to the concrete case of the $p$-adic Jaynes-Cummings system. Section \ref{sec:examples} contains some examples and remarks. Section \ref{sec:circle} discusses the circle actions generated by the integrable systems of Theorem \ref{thm:integrable}. In Section \ref{sec:final} we make some final remarks. We conclude the paper with an appendix which recalls the basic definitions and results concerning the $p$-adic numbers.

\medskip
\textbf{Acknowledgments.}
We thank Pilar Bayer and Enrique Arrondo for discussions and suggestions which have improved the paper. We are really grateful to an anonymous referee for helpful comments which have improved the paper.

The second author thanks the Dean of the School of Mathematical Sciences Antonio Br\'u and the Chair of the Department of Algebra, Geometry and Topology at the Complutense University of Madrid, Rutwig Campoamor, for their support and excellent resources he is being provided with to carry out the FBBVA project.

Part of this paper was written during a visit of the second author to the Universidad de Cantabria and the Universidad Internacional Men\'endez Pelayo in the summer of 2024 and he thanks both institutions for their hospitality. Another part of this paper was written while the first author was visiting the Complutense University of Madrid in September-October 2024 and he thanks this institution.

\medskip
\textbf{Funding.}
The first author is funded by grant PID2022-137283NB-C21 of MCIN/AEI/ 10.13039/501100011033 / FEDER, UE. The second author is funded by a FBBVA (Bank Bilbao Vizcaya Argentaria Foundation) Grant for Scientific Research Projects with title \textit{From Integrability to Randomness in Symplectic and Quantum Geometry}.

\section{Main results concerning matrices: Theorems \ref{thm:williamson}--\ref{thm:num-forms-lower-bound}}\label{sec:matrices}

In this section we state our main classification results concerning normal forms of $p$-adic matrices in dimensions $2$ and $4$: Theorems \ref{thm:williamson}--\ref{thm:num-forms-lower-bound}. We will use these results as stepping stones to prove the results concerning integrable systems (Theorems \ref{thm:integrable}--\ref{thm:num-integrable2} stated in Section \ref{sec:intro}), but they are also of independent interest and they can be read independently of all the material concerning $p$-adic analytic integrable systems and $p$-adic analytic functions. The proofs of these results are long and tedious and are better organized as a separate paper: we prove them in the sequel \cite{CrePel-matrix} to the present paper.

\subsection{The real Weierstrass-Williamson classification}\label{sec:matrices-real}

Let $\Omega_0$ be the matrix of the standard symplectic form $\omega_0$ on $\R^{2n}$, that is, a block-diagonal matrix of size $2n$ with all blocks equal to
\[\begin{pmatrix}
	0 & 1 \\
	-1 & 0
\end{pmatrix},\]
i.e.
\[\Omega_0=\begin{pmatrix}
	0 & 1 &   &   &   &   &   \\
	-1& 0 &   &   &   &   &   \\
	  &   & 0 & 1 &   &   &   \\
	  &   &-1 & 0 &   &   &   \\
	  &   &   &   & \ddots &   &   \\
	  &   &   &   &   & 0 & 1 \\
	  &   &   &   &   &-1 & 0
\end{pmatrix}\]

\begin{definition}[Symplectic matrix]
	A matrix $S\in\M_{2n}(\R)$ is \emph{symplectic} if $S^T\Omega_0S=\Omega_0$, that is, it leaves invariant the standard symplectic form (the same definition applies if we replace $\R$ by an arbitrary field $F$).
\end{definition}
For a symmetric matrix $M\in\M_{2n}(\R)$ such that the eigenvalues of $\Omega_0^{-1}M$ are pairwise distinct, the Weierstrass-Williamson classification says that there is a symplectic matrix $S$ such that $S^T MS=N$, where
$N$ is a block-diagonal matrix with each block equal to
\[\begin{pmatrix}
	r_i & 0 \\
	0 & r_i
\end{pmatrix},
\begin{pmatrix}
	0 & r_i \\
	r_i & 0
\end{pmatrix}
\text{ or }
\begin{pmatrix}
	0 & r_{i+1} & 0 & r_i \\
	r_{i+1} & 0 & -r_i & 0 \\
	0 & -r_i & 0 & r_{i+1} \\
	r_i & 0 & r_{i+1} & 0
\end{pmatrix},\]
for some $r_i\in\R,1\le i\le n$, which are called \textit{elliptic block}, \textit{hyperbolic block} and \textit{focus-focus block}.

Quite often the Weierstrass-Williamson classification is stated only for positive-definite matrices: in fact the condition that all eigenvalues of $\Omega_0^{-1}M$ are pairwise distinct is implied by $M$ being positive definite, and in this case only the elliptic block appears. (For applications to integrable systems the condition on $\Omega_0^{-1}M$ ``translates'' to the notion of a critical point being \emph{non-degenerate}; we'll see this later.) In fact, this is the particular case of what is often called Williamson's theorem (that is, what we call the Weierstrass-Williamson classification) which is due to Weierstrass \cite{Weierstrass} (we learned this fact from the book by Hofer and Zehnder \cite[Theorem 8]{HofZeh}).

Of course, the case which Williamson treats is much more complicated and interesting: he is able to deal with the completely general situation in which the eigenvalues are not necessarily pairwise distinct. The problem with this case is that it is not feasible to write all the possibilities, for arbitrary dimension, in a compact form because the size of the blocks which are needed can increase without bound. However Williamson does provide a complete list of $4$-by-$4$ matrix normal forms at the end of his paper \cite{Williamson}. These are the possibilities, expressed in a different way to align with the conventions of our paper:
\[\begin{pmatrix}
	0 & r & 0 & 0 \\
	r & 0 & 0 & 0 \\
	0 & 0 & 0 & s \\
	0 & 0 & s & 0
\end{pmatrix},
\begin{pmatrix}
	0 & 0 & 0 & 0 \\
	0 & a & 0 & 0 \\
	0 & 0 & 0 & s \\
	0 & 0 & s & 0
\end{pmatrix},
\begin{pmatrix}
	0 & 0 & 0 & 0 \\
	0 & a & 0 & 0 \\
	0 & 0 & 0 & 0 \\
	0 & 0 & 0 & b
\end{pmatrix},\]
\[\begin{pmatrix}
	0 & r & 0 & 0 \\
	r & 0 & 1 & 0 \\
	0 & 1 & 0 & r \\
	0 & 0 & r & 0
\end{pmatrix},
\begin{pmatrix}
	0 & 0 & 0 & 0 \\
	0 & 0 & 1 & 0 \\
	0 & 1 & 0 & 0 \\
	0 & 0 & 0 & a
\end{pmatrix},
\begin{pmatrix}
	0 & r & 0 & 0 \\
	r & 0 & 0 & 0 \\
	0 & 0 & s & 0 \\
	0 & 0 & 0 & s
\end{pmatrix},\]
\[\begin{pmatrix}
	0 & 0 & 0 & 0 \\
	0 & a & 0 & 0 \\
	0 & 0 & s & 0 \\
	0 & 0 & 0 & s
\end{pmatrix},
\begin{pmatrix}
	r & 0 & 0 & 0 \\
	0 & r & 0 & 0 \\
	0 & 0 & s & 0 \\
	0 & 0 & 0 & s
\end{pmatrix},
\begin{pmatrix}
	a & 0 & 0 & r \\
	0 & 0 & -r & 0 \\
	0 & -r & a & 0 \\
	r & 0 & 0 & 0
\end{pmatrix},
\begin{pmatrix}
	0 & s & 0 & r \\
	s & 0 & -r & 0 \\
	0 & -r & 0 & s \\
	r & 0 & s & 0
\end{pmatrix},\]
where $r,s\in\R$ and $a,b\in\{1,-1\}$. Since in the present paper we only provide a classification of $p$-adic $4$-by-$4$ matrices (also of $p$-adic $2$-by-$2$ matrices, but this case is simpler), it is precisely the list above which is most relevant to us. Nonetheless we state the classification in full generality in Appendix \ref{sec:real}. We will provide a new proof of the classification in \cite{CrePel-matrix}.

\subsection{The $p$-adic Weierstrass-Williamson classification}\label{sec:padic-williamson}

In this article, we will give such a classification in the case when the real field $\R$ is replaced by the $p$-adic field $\Qp$, where $p$ is an arbitrary prime number, and $V$ has dimension $2$ or $4$. Recall that the \emph{field of $p$-adic numbers} is defined as follows: it is the completion of $\Q$ with respect to a non-archimedean absolute value defined as \[|x|_p=p^{-\ord_p(x)},\] where $\ord_p(x)$ is the greatest power of $p$ that divides $x$. The basic concepts about $p$-adic numbers can be found in \cite{Gouvea,Schneider}.

The classification is completely different for $p=2$ than $p\ne 2$; for the latter case, in turn, it depends on the class of $p$ modulo $4$.

Our main results are the following classifications of $p$-adic symmetric matrices.

\begin{maintheorem}[$p$-adic classification, $2$-by-$2$ case]\label{thm:williamson}
	\letpprime. Let $M\in\M_2(\Qp)$ be a symmetric matrix. Let $X_p,Y_p$ be the non-residue sets in Definition \ref{def:sets}. Then, there exists a symplectic matrix $S\in\M_2(\Qp)$ and either $c\in X_p$ and $r\in\Qp$, or $c=0$ and $r\in Y_p\cup\{1\}$, such that
	\[S^T MS=\begin{pmatrix}
		r & 0 \\
		0 & cr
	\end{pmatrix}.\]
	Furthermore, if two symplectic matrices $S$ and $S'$ reduce $M$ to the normal form of the right hand side of the equality above, then the two normal forms are equal.
\end{maintheorem}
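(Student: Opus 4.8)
The plan is to reduce the problem to the classification of binary quadratic forms over $\Qp$, via the operator $A=\Omega_0^{-1}M$. First I would observe that finding a symplectic $S$ with $S^TMS$ in normal form is equivalent to finding a symplectic change of basis conjugating $A=\Omega_0^{-1}M$ to a normal form, since $S^T\Omega_0 S=\Omega_0$ gives $\Omega_0^{-1}(S^TMS)=S^{-1}AS$. In dimension $2$ every invertible matrix is symplectic up to scaling: $\mathrm{Sp}_2(\Qp)=\mathrm{SL}_2(\Qp)$, so the symplectic condition is just $\det S=1$. Hence the content is: given the symmetric $M$, classify it up to congruence $M\mapsto S^TMS$ with $\det S=1$. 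Splitting off $\det S=\lambda$ and absorbing $\lambda$ freely shows that a symmetric $2\times2$ matrix over $\Qp$ is equivalent (in the sense of the theorem) to $\mathrm{diag}(r,cr)$, where the pair $(r,c)$ is constrained as in the statement; the key invariants are the determinant class $\det M = c r^2$ modulo squares, and, when $\det M=0$, the rank and the nonzero diagonal entry modulo squares.

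The key steps, in order, would be: (1) Diagonalize $M$ by a congruence over $\Qp$ — always possible for symmetric matrices in characteristic $\ne 2$ by the usual Gram–Schmidt/completing-the-square argument — to get $M\sim\mathrm{diag}(d_1,d_2)$; here we may already use arbitrary $\mathrm{GL}_2$ congruences and only later track the $\mathrm{SL}_2$ refinement. (2) In the nondegenerate case ($d_1d_2\ne0$): rescale the basis vectors to move $d_1$ to any convenient representative $r$ of its square class (with the determinant of the scaling accounted for), so that $d_2$ becomes $cr$ with $c\equiv d_1 d_2\pmod{(\Qp^\times)^2}$; then choose $c$ to be the distinguished representative of $\det M$ in $X_p$, invoking the structure of $\Qp^\times/(\Qp^\times)^2$ (which has order $4$ for $p$ odd and order $8$ for $p=2$, see Appendix \ref{sec:Qp}), and argue that once $c\in X_p$ is fixed, $r$ ranges over all of $\Qp^\times$ but different $r$ in the same square class give congruent — indeed $\mathrm{SL}_2$-congruent — forms, while $r$'s in different classes do not. (3) In the degenerate case ($\det M=0$, $M\ne0$): the rank is $1$, $M\sim\mathrm{diag}(r,0)$, and here the outstanding square-class of the nonzero entry is a genuine invariant that cannot be scaled away within $\mathrm{SL}_2$ once we also normalize $r$; this is why the second alternative allows $c=0$ and $r\in Y_p\cup\{1\}$, exactly the representatives of $\Qp^\times/(\Qp^\times)^2$ listed (one checks $Y_p\cup\{1\}$ is a transversal). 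The case $M=0$ gives $r=0$. (4) For the uniqueness clause, I would show $\det M=cr^2$ and (in the degenerate case) the rank and the class $r(\Qp^\times)^2$ are complete invariants of the congruence class, and that $X_p$ (resp. $Y_p\cup\{1\}$) contains exactly one representative of each relevant class, so the normal form on the right-hand side is literally determined by $M$.

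The main obstacle I anticipate is bookkeeping the passage from $\mathrm{GL}_2$-congruence to $\mathrm{SL}_2$-congruence (the symplectic constraint): one must verify that restricting $\det S=1$ does not create extra invariants beyond those visible for $\mathrm{GL}_2$. The resolution is that scaling a single basis vector $e_1\mapsto te_1$ has determinant $t$, which is an arbitrary element of $\Qp^\times$, so any discrepancy in determinant can be pushed entirely into rescaling $r$; since we are allowing $r$ to be arbitrary in $\Qp$ (only $c$ is pinned down to a set of representatives), the symplectic restriction costs us nothing in the nondegenerate case. In the degenerate rank-$1$ case one has to be slightly more careful because scaling the kernel vector is "free" but scaling the non-kernel vector changes $r$ by a square (a determinant-$t$ scaling of $e_1$ alone multiplies $d_1$ by $t^2$), so the square class of $r$ survives — which is precisely why $r$ must be allowed to range over $Y_p\cup\{1\}$ rather than being fixed. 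A secondary technical point is the explicit verification, case by case on $p\bmod 4$ and $p=2$, that the listed sets $X_p$ and $Y_p$ are the correct transversals; this is a finite check using the description of squares in $\Qp$ recalled in the appendix.
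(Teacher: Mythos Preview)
Your plan has a genuine gap in the nondegenerate case. You assume $X_p$ is a transversal for $\Qp^\times/(\Qp^\times)^2$ and that $c$ is recovered from the square class of $\det M$; but $|X_p|$ is $7$, $5$, or $11$ (for $p\equiv 1,3\bmod 4$ and $p=2$), strictly more than the $4$ or $8$ square classes, and $X_p$ deliberately contains several elements in the same square class (for instance both $c_0$ and $c_0p^2$ when $p\equiv 1\bmod 4$). The point is that the determinant---even its exact value, which is genuinely an $\mathrm{SL}_2$-invariant---does \emph{not} determine the $\mathrm{SL}_2$-congruence class of a nondegenerate binary form over $\Qp$: one also needs the Hasse invariant, which your outline never mentions. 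Concretely, for $p=5$ the matrices $\mathrm{diag}(1,2)$ and $\mathrm{diag}(1/5,10)$ have the same determinant $2$ but Hasse invariants $(1,2)_5=1$ and $(1/5,10)_5=-1$, so they are not even $\mathrm{GL}_2$-equivalent; these are precisely the normal forms with $c=c_0$ and $c=c_0p^2$. (Relatedly, your claim that ``$r$'s in the same square class give $\mathrm{SL}_2$-congruent forms'' is also off: since $\det$ is an exact invariant, $\mathrm{diag}(r,cr)\sim_{\mathrm{SL}_2}\mathrm{diag}(r',cr')$ forces $r'=\pm r$, not merely $r'\in r(\Qp^\times)^2$.)

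The paper does not go through the classical quadratic-form invariants. It works with $A=\Omega_0^{-1}M$: if the eigenvalues $\pm\lambda$ lie in $\Qp$ one lands in a single hyperbolic class (Proposition~\ref{prop:hyperbolic}); if $\lambda\notin\Qp$, Proposition~\ref{prop:elliptic} shows that the target $\mathrm{diag}(a,b)$ is reachable iff $\lambda^2=-ab$ \emph{and} $2\lambda a/(u^T\Omega_0\bar u)\in\DSq(\Qp,-\lambda^2)$. This second, Hilbert-symbol-type condition is exactly what separates normal forms with the same determinant, and the bulk of the proof is the explicit computation of the sets $\DSq(\Qp,c)$ (Propositions~\ref{prop:images-extra} and~\ref{prop:images-extra2}) followed by a case-by-case check that for each $M$ exactly one $c\in X_p$ satisfies both conditions. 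Your treatment of the rank-$1$ case and the observation $\mathrm{Sp}_2(\Qp)=\mathrm{SL}_2(\Qp)$ are fine; a binary-quadratic-forms approach could be completed, but only after bringing in the Hasse invariant and verifying that $X_p$ parametrises the (discriminant class, Hasse) pairs---a computation of comparable difficulty.
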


In Theorem \ref{thm:williamson} we are not saying that the value of $(c,r)$ is unique (which essentially is, but not quite), but that the canonical matrix obtained at the right-hand side is unique.

In the statement below, and also Theorem \ref{thm:num-forms1}, we use the following terminology.

\begin{definition}
	Let $n$ be a positive integer. \letpprime. We say that two $(2n)$-by-$(2n)$ matrices $M$ and $M'$ with coefficients in $\Qp$ are \emph{congruent via a symplectic matrix} if there exists a $(2n)$-by-$(2n)$ symplectic matrix $S$ with coefficients in $\Qp$ such that $S^TMS=M'$ (same definition works for arbitrary fields).
\end{definition}

The number of normal forms appearing in Theorem \ref{thm:williamson} is as follows:

\begin{maintheorem}[Number of inequivalent $p$-adic $2$-by-$2$ matrix normal forms]\label{thm:num-forms2}
	\letpprime. Let $X_p,Y_p$ be the non-residue sets in definition \ref{def:sets}. Then the following statements hold.
	\begin{enumerate}
		\item If $p\equiv 1\mod 4$, there are exactly $7$ infinite families of normal forms of $2$-by-$2$ $p$-adic matrices with one degree of freedom up to congruence via a symplectic matrix:
		\[\Big\{\Big\{r\begin{pmatrix}
			1 & 0 \\
			0 & c
		\end{pmatrix}:r\in\Qp\Big\}:c\in X_p\Big\},\]
		and exactly $4$ isolated normal forms, which correspond to $c=0$:
		\[\Big\{\begin{pmatrix}
			r & 0 \\
			0 & 0
		\end{pmatrix}:r\in Y_p\cup\{1\}\Big\}.\]
		\item If $p\equiv 3\mod 4$, there are exactly $5$ infinite families of normal forms of $2$-by-$2$ $p$-adic matrices with one degree of freedom up to congruence via a symplectic matrix, with the same formula as above, and exactly $4$ isolated normal forms.
		\item If $p=2$, there are exactly $11$ infinite families of normal forms of $2$-by-$2$ $p$-adic matrices with one degree of freedom up to congruence via a symplectic matrix, also with the same formula, and exactly $8$ isolated normal forms.
	\end{enumerate}
	This is in contrast with the real case, where there are exactly $2$ families, elliptic and hyperbolic, and $2$ isolated normal forms. Here by ``infinite family'' we mean a family of normal forms of the form $r_1M_1+r_2M_2+\ldots+r_kM_k$, where $r_i\in\Qp$ are parameters and $k$ is the number of degrees of freedom.
\end{maintheorem}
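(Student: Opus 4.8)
The plan is to read off the count of equivalence classes directly from the $2$-by-$2$ classification already established in Theorem \ref{thm:williamson}, and then to check that the representatives listed there are pairwise inequivalent. By Theorem \ref{thm:williamson}, every symmetric $M\in\M_2(\Qp)$ is equivalent (up to multiplication by a symplectic matrix) to exactly one matrix of the form $\mathrm{diag}(r,cr)$ with either $c\in X_p$, $r\in\Qp$, or $c=0$, $r\in Y_p\cup\{1\}$; moreover the uniqueness clause of Theorem \ref{thm:williamson} says that the canonical matrix on the right-hand side is uniquely determined by $M$. So the first step is purely bookkeeping: the ``infinite family'' part of the list is indexed by $c\in X_p$, each family being $\{r\,\mathrm{diag}(1,c):r\in\Qp\}$, so the number of such families is $|X_p|$; and the ``isolated'' part is indexed by $r\in Y_p\cup\{1\}$, giving $|Y_p\cup\{1\}|=|Y_p|+1$ isolated forms. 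Then I would simply evaluate these cardinalities from Definition \ref{def:sets}: for $p\equiv 1\bmod 4$, $|X_p|=7$ and $|Y_p|+1=3+1=4$; for $p\equiv 3\bmod 4$, $|X_p|=5$ and $|Y_p|+1=3+1=4$; for $p=2$, $|X_p|=11$ and $|Y_p|+1=7+1=8$. This gives all the numbers in the three items.

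The one thing that genuinely needs an argument, beyond quoting Theorem \ref{thm:williamson}, is that the listed representatives really are distinct equivalence classes — i.e.\ that there is no collapsing between two different families, between two different isolated forms, or between a family and an isolated form. For the families versus the isolated forms this is immediate: in an isolated form $\mathrm{diag}(r,0)$ one eigenvalue of $\Omega_0^{-1}M$ is $0$, which is a symplectic invariant (indeed the characteristic polynomial of $\Omega_0^{-1}M$ is unchanged by $M\mapsto S^TMS$), whereas for a family member $r\,\mathrm{diag}(1,c)$ with $c\in X_p$ and $r\ne 0$ it is not; and the family with a given $c$ contains the zero matrix only in the limiting sense $r=0$, which also lies in every family, so the zero matrix must be identified across families, but that is consistent with it also being $\mathrm{diag}(0,0)$, the $r=0$ isolated form — one checks the statement is about families of forms, counted as families, not about individual matrices, so this causes no overcounting. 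For two distinct $c,c'\in X_p$, if $r\,\mathrm{diag}(1,c)$ were equivalent to $r'\,\mathrm{diag}(1,c')$ with $r,r'\ne 0$, then applying the uniqueness part of Theorem \ref{thm:williamson} to $M=r\,\mathrm{diag}(1,c)$ would force $\mathrm{diag}(r,cr)=\mathrm{diag}(r',c'r')$ (or with the entries swapped), which after inspecting which of $X_p$ the ratio of diagonal entries lands in — recalling $X_p$ is exactly a set of representatives of a quotient related to $\Qp^\times/(\Qp^\times)^2$ together with the valuation — yields $c=c'$. For distinct $r,r'\in Y_p\cup\{1\}$ the isolated forms $\mathrm{diag}(r,0)$ and $\mathrm{diag}(r',0)$ are inequivalent by the same uniqueness clause. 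Thus the listed normal forms are pairwise inequivalent and the counts are exact.

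Finally I would record the real comparison and the meaning of ``infinite family''. In the real case, $\Qp^\times/(\Qp^\times)^2$ is replaced by $\R^\times/(\R^\times)^2=\{\pm 1\}$, so $X_p$ degenerates to $\{1,-1\}$ — giving the elliptic family $r\,\mathrm{diag}(1,1)$ and the hyperbolic family $r\,\mathrm{diag}(1,-1)$ — and $Y_p\cup\{1\}$ degenerates to $\{1,-1\}$, giving two isolated forms $\mathrm{diag}(1,0)$ and $\mathrm{diag}(-1,0)$; this is the classical Weierstrass-Williamson picture recalled in Section \ref{sec:padic-williamson} and matches the last sentence of the statement. The phrase ``infinite family'' refers, as stated, to a set of the form $\{r_1M_1+\dots+r_kM_k:r_i\in\Qp\}$ with the $r_i$ free parameters; here $k=1$ and $M_1=\mathrm{diag}(1,c)$, i.e.\ ``one degree of freedom'', which is why the families are one-parameter. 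The main obstacle in the whole argument is not any single step but making the inequivalence bookkeeping airtight — in particular being careful about the degenerate matrices that sit in the closure of several families — but this is handled entirely by invoking the uniqueness clause of Theorem \ref{thm:williamson} together with the invariance of the characteristic polynomial of $\Omega_0^{-1}M$.
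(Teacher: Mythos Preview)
Your proposal is correct and takes essentially the same approach as the paper: the paper's proof of Theorem~\ref{thm:num-forms2} is a single sentence stating that it follows directly from Theorem~\ref{thm:williamson}, with the isolated forms corresponding to the values of $r$ for $c=0$ and each infinite family corresponding to a value of $c\in X_p$. Your additional bookkeeping about pairwise inequivalence is more explicit than what the paper writes, but it is exactly the content of the uniqueness clause of Theorem~\ref{thm:williamson}, so nothing substantively different is happening.
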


Hence, already in dimension $2$, the $p$-adic situation is much richer than its real counterpart. The situation is even more surprising in dimension $4$. This is the classification in the case where $\Omega_0^{-1}M$ has all eigenvalues distinct, where $\Omega_0$ is the same matrix as before for dimension $4$:
\[\Omega_0=\begin{pmatrix}
	0 & 1 & 0 & 0 \\
	-1 & 0 & 0 & 0 \\
	0 & 0 & 0 & 1 \\
	0 & 0 & -1 & 0
\end{pmatrix}\]

\begin{maintheorem}[$p$-adic classification, $4$-by-$4$, non-degenerate case]\label{thm:williamson4}
	\letpprime. Let $\Omega_0$ be the matrix of the standard symplectic form on $(\Qp)^4$. Let $X_p,Y_p$ be the non-residue sets in Definition \ref{def:sets}. Let $M\in\M_4(\Qp)$ be a symmetric matrix such that all the eigenvalues of $\Omega_0^{-1}M$ are distinct. Then there exists a symplectic matrix $S\in\M_4(\Qp)$ and $r,s\in\Qp$ such that one of the following three possibilities holds:
	\begin{enumerate}
		\item There exist $c_1,c_2\in X_p$ such that
		\[S^TMS=\begin{pmatrix}
			r & 0 & 0 & 0 \\
			0 & c_1r & 0 & 0 \\
			0 & 0 & s & 0 \\
			0 & 0 & 0 & c_2s
		\end{pmatrix}.\]
		\item There exists $c\in Y_p$ such that
		\[S^TMS=\begin{pmatrix}
			0 & s & 0 & r \\
			s & 0 & cr & 0 \\
			0 & cr & 0 & s \\
			r & 0 & s & 0
		\end{pmatrix}.\]
		\item There exist $c,t_1$ and $t_2$ corresponding to one row of the Table \ref{table:canonical} such that $S^TMS$ is equal to the matrix
		\[
		\renewcommand{\arraystretch}{2}
		\begin{pmatrix}
			\dfrac{ac(r-bs)}{c-b^2} & 0 & \dfrac{sc-rb}{c-b^2} & 0 \\
			0 & \dfrac{-r(t_1+bt_2)-s(bt_1+ct_2)}{a} & 0 & -r(bt_1+ct_2)-sc(t_1+bt_2) \\
			\dfrac{sc-rb}{c-b^2} & 0 & \dfrac{r-bs}{a(c-b^2)} & 0 \\
			0 & -r(bt_1+ct_2)-sc(t_1+bt_2) & 0 & ac(-r(t_1+bt_2)-s(bt_1+ct_2))
		\end{pmatrix}\]
		where $(a,b)$ are either $(1,0)$ or $(a_1,b_1)$ of the corresponding row.
	\end{enumerate}
	Furthermore, if two matrices $S$ and $S'$ reduce $M$ to one of the normal forms in the right-hand side of the three equalities above, then the two normal forms are in the same case; if it is case (2) or (3), they coincide, and in case (1) they coincide up to exchanging the $2$ by $2$ diagonal blocks. Moreover, the family of normal forms (that is, the normal form except for $r$ and $s$) and the matrix $S$ are uniquely determined by the eigenvectors of $\Omega_0^{-1}M$.
\end{maintheorem}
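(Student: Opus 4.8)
The plan is to replace $M$ by the operator $A:=\Omega_0^{-1}M$. From $S^T\Omega_0 S=\Omega_0$ one gets $\Omega_0^{-1}(S^TMS)=S^{-1}(\Omega_0^{-1}M)S$, so the $\mathrm{Sp}$-conjugacy class of $A$ — and in particular the factorisation over $\Qp$ of its characteristic polynomial $\chi_A$ — is a symplectic invariant of $M$. Symmetry of $M$ means $A^T\Omega_0+\Omega_0A=0$, hence $\chi_A$ is even, $\chi_A(x)=x^4+\beta x^2+\gamma$ with $\beta,\gamma\in\Qp$; its roots are $\pm\lambda,\pm\mu$, where $\lambda^2,\mu^2$ are the roots of $q(y):=y^2+\beta y+\gamma$, and the hypothesis that the four eigenvalues be distinct is exactly $\gamma\neq0$ and $\beta^2\neq4\gamma$. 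There are then precisely three possibilities: (1) $q$ splits over $\Qp$, so $\lambda^2,\mu^2\in\Qp^\times$; (2) $q$ is irreducible over $\Qp$ but $\chi_A$ is reducible, equivalently $\chi_A(x)=g(x)g(-x)$ for an irreducible quadratic $g\in\Qp[x]$ and $\Qp[A]\cong K\times K$ with $K=\Qp[x]/(g)$; (3) $\chi_A$ is irreducible, so $\Qp[A]$ is a quartic field. I would treat them in this order. The common mechanism is to lift the problem to the splitting field $E$ of $\chi_A$ (a Galois extension of $\Qp$ of degree dividing $8$), where by the algebraically closed classification of Section~\ref{sec:algclosed} a normal form is immediate, and then descend to $\Qp$ by a $\mathrm{Gal}(E/\Qp)$-equivariance argument, keeping track of the square-class data the extension forgets.

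In case~(1), since $\lambda^2\neq\mu^2$ both lie in $\Qp^\times$, there is an $A$-invariant decomposition $(\Qp)^4=\ker(A^2-\lambda^2 I)\oplus\ker(A^2-\mu^2 I)$ into two planes; a standard eigenvector computation over $E$ — the $\pm\lambda$ eigenspaces pair non-degenerately under $\omega_0$, likewise the $\pm\mu$ ones, and all mixed pairs are $\omega_0$-orthogonal — shows that these two planes are mutually $\omega_0$-orthogonal and that $\omega_0$ restricts non-degenerately to each. Applying Theorem~\ref{thm:williamson} on each plane, and observing that $A$ is invertible there so the $c=0$ alternative of that theorem cannot occur, produces a symplectic $S$ putting $M$ into the block-diagonal form of case~(1) with $c_1,c_2\in X_p$.

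In cases~(2) and~(3) there is no proper $A$-invariant \emph{symplectic} subspace over $\Qp$ (in case~(2) the only $A$-invariant planes are the two $A$-invariant Lagrangians $\ker g(A)$ and $\ker g(-A)$, which are complementary and $\omega_0$-dual; in case~(3) there is no proper invariant subspace at all). So one instead chooses a basis of $(\Qp)^4$ adapted to its module structure over $\Qp[A]$ — a cyclic vector and its images under powers of $A$, or a Jordan basis over $E$ symmetrised by the Galois action — computes $\Omega_0$ and $M$ in that basis, and normalises the residual freedom. In case~(2) the two $A$-invariant Lagrangians, being complementary and $\omega_0$-dual, force the block-anti-diagonal shape of case~(2); the parameter $c\in Y_p$ is the class, in $\Qp^\times/(\Qp^\times)^2$ refined as in Definition~\ref{def:sets}, of the invariant carried by the $K$-structure on those Lagrangians, and its well-definedness comes from the $2$-by-$2$ uniqueness transported across $\mathrm{Gal}(K/\Qp)$. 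In case~(3) one works inside $L=\Qp(\lambda)$; noting that $\lambda\mapsto-\lambda$ is a $\Qp$-automorphism of $L$ of order $2$ with fixed field $K$, the pair $(\omega_0,M)$ becomes a rank-one skew-hermitian datum for $L/K$, and one enumerates the Galois configurations (the Galois closure of $L/\Qp$ has group $D_4$, $\Z/4$, or the transitive $\Z/2\times\Z/2$ in $S_4$), unwinding the descent in each to reach the one-parameter family with the displayed coefficients, where the row of Table~\ref{table:canonical} and the choice $(a,b)\in\{(1,0),(a_1,b_1)\}$ record the relevant square classes and norms. For the uniqueness clause, one checks that the factorisation type of $\chi_A$ separates the three cases, that within case~(1) the uniqueness part of Theorem~\ref{thm:williamson} forces the unordered pair of $2$-by-$2$ blocks, and that within cases~(2) and~(3) the square-class and norm invariants (with the transported $2$-by-$2$ uniqueness, compatibly with the Galois action) determine the normal form.

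The main obstacle is case~(3): one must (i) exhibit an \emph{explicit} $\Qp$-rational symplectic basis realising the normal form with the precise Table~\ref{table:canonical} parameters, for each admissible Galois action on the quartic; (ii) prove that the rows of Table~\ref{table:canonical} together with the two choices of $(a,b)$ are exhaustive and pairwise inequivalent, which needs careful bookkeeping of $\Qp^\times/(\Qp^\times)^2$, of the norm subgroups $N_{K/\Qp}K^\times$, and of fourth powers in $\Qp^\times$; and (iii) redo the analysis for $p=2$, where $2$ ramifies, $|\Qp^\times/(\Qp^\times)^2|=8$ rather than $4$, and consequently Table~\ref{table:canonical} acquires its many additional rows. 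By contrast, the existence statements in cases~(1) and~(2), together with all the uniqueness claims, reduce to linear algebra over $\Qp$ and the already-established $2$-by-$2$ theory.
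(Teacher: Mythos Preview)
Your outline matches the paper's strategy at the top level: the same trichotomy on the factorisation of $\chi_A(x)=x^4+\beta x^2+\gamma$ (equivalently, on whether $\lambda^2\in\Qp$ and whether $\lambda\in\Qp[\lambda^2]$), the same reduction of case~(1) to two applications of Theorem~\ref{thm:williamson} on the $\omega_0$-orthogonal eigenplanes, and the same recognition that case~(3) is where all the work lies. Your uniqueness sketch is also the paper's: the factorisation type separates the three cases, and within each case the relevant square-class data pin down the form.

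The differences are in how cases~(2) and~(3) are executed. For case~(2) you use the pair of $A$-invariant Lagrangians $\ker g(\pm A)$ and reduce to $\mathrm{GL}_2(\Qp)$-conjugacy of a single $2\times2$ block; this is correct and, after reordering the basis to the paper's $\Omega_0$-convention, reproduces exactly the displayed matrix with $B=\bigl(\begin{smallmatrix}s&r\\cr&s\end{smallmatrix}\bigr)$. The paper instead proves Proposition~\ref{prop:focus}: it writes down the target matrix $N$, computes its eigenvectors explicitly over $\Qp[\alpha]$, and shows the transition $S=\Psi_1D\Psi_2^{-1}$ can be made $\Qp$-rational by imposing $d_3=\bar d_1$, $d_4=\bar d_2$, whereupon the single remaining constraint~\eqref{eq:relation1} is always solvable. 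Your Lagrangian route is arguably cleaner here since it never leaves $\Qp$; the paper's route has the advantage of being the same template reused in case~(3).

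For case~(3) the paper does \emph{not} proceed by the Galois-group case split $D_4/\Z_4/V_4$ you propose. Instead it proves Proposition~\ref{prop:exotic}, an explicit if-and-only-if criterion: the target form with parameters $(c,t_1,t_2,a,b)$ is reachable exactly when $a\alpha\gamma(b+\alpha)/(u^T\Omega_0\hat u)\in\DSq(\Qp[\alpha],-\gamma^2)$. Everything then reduces to computing, for each of the three (or seven, when $p=2$) quadratic extensions $\Qp[\alpha]$, the group $\Qp[\alpha]^\times/\Sq(\Qp[\alpha]^\times)$ and the subgroups $\bDSq(\Qp[\alpha],-\gamma^2)$ for each admissible $\gamma^2$; this is the content of Propositions~\ref{prop:squares1}--\ref{prop:squares2}, Corollaries~\ref{cor:squares1}--\ref{cor:squares2}, and the Hilbert-symbol Tables~\ref{table:normalforms1}--\ref{table:-6}. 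The rows of Table~\ref{table:canonical} and the pair $\{(1,0),(a_1,b_1)\}$ are then read off as coset representatives. Your skew-hermitian framing over $L/K$ is a legitimate conceptual reformulation of the same obstruction, but to extract the explicit parameters and verify exhaustiveness and pairwise inequivalence you would in the end have to carry out computations equivalent to these tables; the paper's approach simply does that computation directly, one extension at a time.
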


See Figure \ref{fig:formas} for a diagram of the classes in the statement of Theorem \ref{thm:williamson4}.

\begin{figure}
	\setlength{\parskip}{0.5cm}
	\includegraphics[scale=0.8]{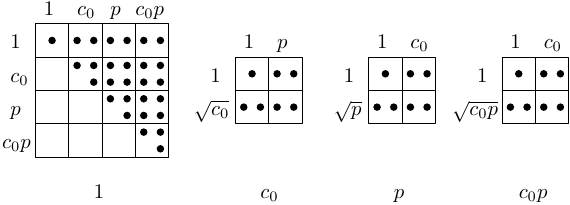}
	
	\includegraphics[scale=0.8]{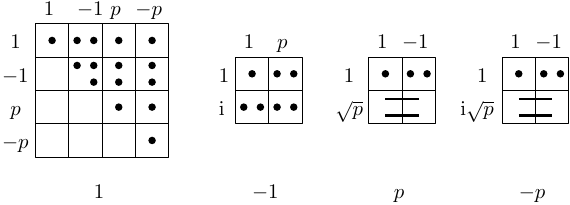}
	
	\includegraphics[scale=0.8]{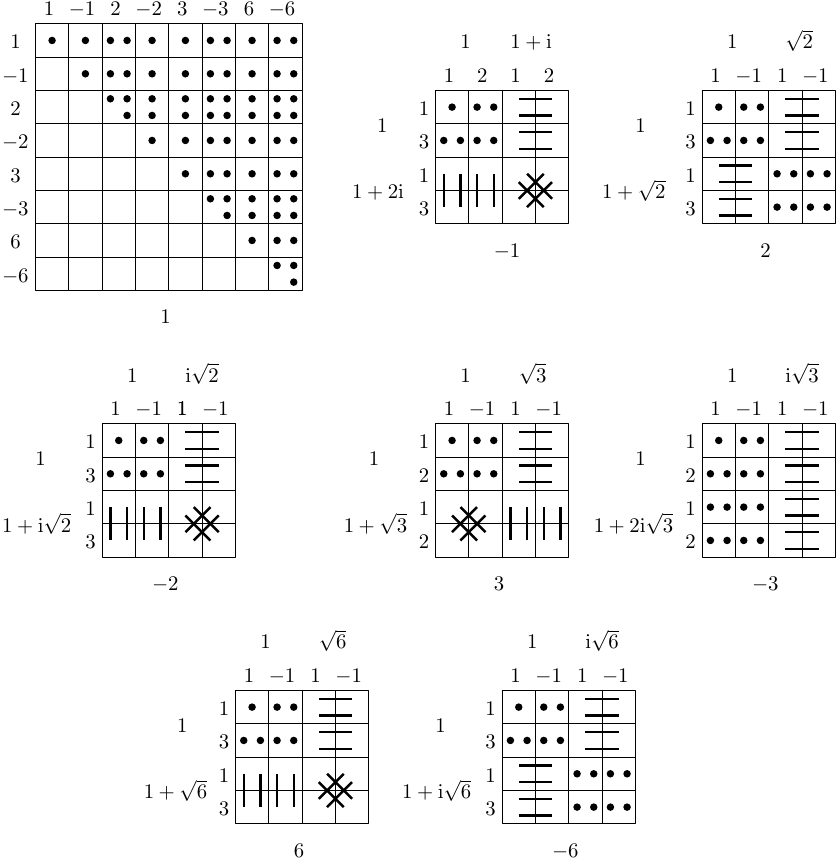}
	\caption{A diagram of the normal forms of Theorem \ref{thm:williamson4} for $p\equiv 1\mod 4$ (first row), $p\equiv 3\mod 4$ (second row) and $p=2$ (third to fifth row). Each point (if it is in a single cell) or line (if it is in two cells) represents a normal form. The numbers below the tables represent the first extension of $\Qp$ (the one containing the squares of the eigenvalues of $\Omega_0^{-1}M$) and those in the rows and columns represent the second extension (for the eigenvalues themselves). In the first table in each block, there are two such extensions corresponding to the row and the column; in the other ones, there is one extension obtained multiplying the numbers in the row and column.}
	\label{fig:formas}
\end{figure}

\begin{definition}[Non-residue function]\label{def:h}
	\letpprime. If $p\equiv 1\mod 4$, let $c_0$ be the smallest quadratic non-residue modulo $p$. We define the \emph{non-residue function}:
	\[h_p:Y_p\to\Qp\text{ given by }\begin{cases}
		h_p(c_0)=p,h_p(p)=h_p(c_0p)=c_0 & \text{if }p\equiv 1\mod 4; \\
		h_p(-1)=p,h_p(p)=h_p(-p)=-1 & \text{if }p\equiv 3\mod 4; \\
		h_p(-1)=h_p(-2)=h_p(3)=h_p(6)=-1, & \\
		h_p(-3)=h_p(-6)=2,h_p(2)=3 & \text{if }p=2.
	\end{cases}\]
\end{definition}

For the cases where the eigenvalues of $\Omega_0^{-1}M$ are not pairwise distinct, we have:

\begin{maintheorem}[$p$-adic classification, $4$-by-$4$, degenerate case]\label{thm:williamson4-deg}
	\letpprime. Let $\Omega_0$ be the matrix of the standard symplectic form on $(\Qp)^4$. Let $X_p,Y_p$ be the non-residue sets in Definition \ref{def:sets}. Let $h_p:Y_p\to\Qp$ be the non-residue function in Definition \ref{def:h}. Let $M\in\M_4(\Qp)$ a symmetric matrix such that $\Omega_0^{-1}M$ has at least one multiple eigenvalue. Then there exists a symplectic matrix $S\in\M_4(\Qp)$ such that one of the following three possibilities holds:
	\begin{enumerate}
		\item There exist $r,s\in\Qp$ and $c_1,c_2\in X_p\cup\{0\}$ such that $S^TMS$ has the form in the case (1) of Theorem \ref{thm:williamson4}. Moreover, if $c_1=0$ then $r\in Y_p\cup\{1\}$, and if $c_2=0$ then $s\in Y_p\cup\{1\}$.
		\item There exists $r\in\Qp$ such that
		\[S^TMS=\begin{pmatrix}
			0 & r & 0 & 0 \\
			r & 0 & 1 & 0 \\
			0 & 1 & 0 & r \\
			0 & 0 & r & 0
		\end{pmatrix}\]
		\item There exist $r\in\Qp$, $c\in Y_p$ and $a\in \{1,h_p(c)\}$ such that
		\[S^TMS=\begin{pmatrix}
			a & 0 & 0 & r \\
			0 & 0 & cr & 0 \\
			0 & cr & a & 0 \\
			r & 0 & 0 & 0
		\end{pmatrix}.\]
		\item There exists $c\in Y_p\cup\{1\}$ such that
		\[S^TMS=\begin{pmatrix}
			c & 0 & 0 & 0 \\
			0 & 0 & 0 & -c \\
			0 & 0 & 0 & 0 \\
			0 & -c & 0 & 0
		\end{pmatrix}.\]
	\end{enumerate}
	Furthermore, if two matrices $S$ and $S'$ reduce $M$ to one of these normal forms on the right-hand side of the three equalities above, then the two normal forms are in the same case; if it is case (2), (3) or (4), they coincide completely, and in case (1) they coincide up to exchanging the $2$ by $2$ diagonal blocks.
\end{maintheorem}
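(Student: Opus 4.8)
The plan is to follow the same route as the proof of Theorem~\ref{thm:williamson4}: everything is governed by the Hamiltonian matrix $A:=\Omega_0^{-1}M$, and I would classify $M$ by understanding $A$ as a $\Qp[A]$-module equipped with the $\omega$-compatible structure, decomposing $(\Qp)^4$ into indecomposable $A$-invariant pieces, normalizing each piece (lifting to the field generated by the corresponding eigenvalue when that helps), and finally reading off the discrete invariants as square classes. The one genuinely new phenomenon compared with Theorem~\ref{thm:williamson4} is the presence of nontrivial Jordan blocks, and this is exactly what produces the entries equal to $1$ in cases (2) and (4) and the constraint $a\in\{1,h_p(c)\}$ in case~(3).

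First I would record the combinatorial skeleton. Since $M^T=M$ and $\Omega_0^T=\Omega_0^{-1}=-\Omega_0$, one has $A^T=-\Omega_0 A\Omega_0^{-1}$, so $A$ is conjugate to $-A$ and $\chi_A(t)=g(t^2)$ for a monic quadratic $g$ over $\Qp$, with eigenvalues $\pm\lambda,\pm\mu$ where $\lambda^2,\mu^2$ are the roots of $g$. Having a repeated eigenvalue then means one of: (I) $g$ has a double root $u_0\ne0$ (so $\pm\lambda$ occur with multiplicity $2$); (II) $g$ has roots $0$ and $u_1\ne0$ (so $0$ is a double eigenvalue and $\pm\mu\ne0$ simple); (III) $g(u)=u^2$, i.e.\ $A$ is nilpotent. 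Inside each regime I would split further according to whether the relevant root is a square in $\Qp$ and whether $A$ is semisimple; the symplectic constraint is very restrictive here --- in regime~(I) the Jordan part of $A$ at each of $\pm\lambda$ is either $0$ or a single block of size $2$; in regime~(II) the $0$-eigenspace carries either the zero nilpotent or a single size-$2$ block; and in regime~(III) the nilpotent Jordan type is one of $[1^4],[2,1,1],[2,2],[4]$, the type $[3,1]$ being excluded because in $\mathfrak{sp}(4,\Qp)$ odd parts must have even multiplicity.

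Next comes the reduction, regime by regime. When $A$ is semisimple the generalized eigenspaces are $\omega$-orthogonal and the analysis is exactly that of Theorem~\ref{thm:williamson4}: on a $2$-dimensional summand whose eigenvalues lie in $\Qp$ one applies Theorem~\ref{thm:williamson} to obtain a $2$-by-$2$ block $\operatorname{diag}(r,cr)$ with $c\in X_p\cup\{0\}$ (and $r\in Y_p\cup\{1\}$ when $c=0$, also by Theorem~\ref{thm:williamson}), which assembles into case~(1); on a summand whose eigenvalues generate a quadratic extension $L=\Qp(\lambda)$ one views it as $2$-dimensional over $L$, reduces $(M,\omega)$ there and descends, landing in case~(1) again when $A$ is semisimple and in case~(3) when $A$ has a size-$2$ Jordan block over $L$, with the residual square class over $L$ descending to $a\in\{1,h_p(c)\}$ via Definition~\ref{def:h}. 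When $A$ is not semisimple the relevant summand is a cyclic $\Qp[A]$-module, and I would fix a cyclic generator $v$, pass to the basis $v,Av,A^2v,\dots$, compute the Gram matrices of $\Omega_0$ and of $M$ in it, and normalize: the scaling and shearing freedom along the nilpotent direction brings the Jordan coupling to $1$, producing case~(2) if $\lambda\in\Qp^\times$ and case~(4) (a single size-$4$ block) if $\lambda=0$, and leaving only the square class $c\in Y_p\cup\{1\}$ in the latter. The small-rank nilpotent types $[2,1,1]$ and $[1^4]$, and the vanishing-eigenvalue subcases of regime~(II), all fall into case~(1) with one or both $c_i=0$; the type $[2,2]$ falls into case~(1) with $c_1=c_2=0$ or into case~(2) with $r=0$.

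Finally, for uniqueness I would list a full set of symplectic invariants: the characteristic polynomial of $\Omega_0^{-1}M$, which fixes $r$ and $s$ up to the block exchange permitted in case~(1); the isomorphism type of $(\Qp)^4$ as a $\Qp[\Omega_0^{-1}M]$-module together with the $\omega$-compatible pairing, which separates the four cases; and, within each case, the pertinent square classes over $\Qp$ or over $L$, which pin down $c_1,c_2\in X_p\cup\{0\}$, $c\in Y_p$ and $a\in\{1,h_p(c)\}$. I expect the main obstacle to be twofold. First, for $p=2$ the group $\Qp^\times/(\Qp^\times)^2$ has order $8$ and the binary and quaternary quadratic forms over $\Q_2$ entering the picture have subtle isotropy behaviour, so verifying that $X_2$, $Y_2$ and $h_2$ are exactly the right representatives --- and that the various normalizations can actually be carried out over $\Q_2$ --- is a careful, lengthy case check. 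Second, in the degenerate range the similarity class of $A$ by itself does not always distinguish the normal forms (a $[2,2]$ nilpotent, for instance, sits both at the boundary of case~(1) and in case~(2) with $r=0$), so one must use the isometry class of the quadratic form induced on the Lagrangian cokernel of $M$; making sure these overlaps are completely under control, and that passing to the degenerate limit creates no equivalences invisible in Theorem~\ref{thm:williamson4}, is the delicate core of the argument.
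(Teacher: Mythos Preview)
Your plan is sound and tracks the paper's proof closely: organize by the Jordan structure of $A=\Omega_0^{-1}M$ under the symplectic constraint, then normalize each indecomposable piece and read off square-class invariants. The paper carries this out more concretely than you do. Rather than speaking of $\Qp[A]$-modules and descent from $L=\Qp(\lambda)$, it invokes two tools already built: Lemma~\ref{lemma:eig2}, which over the algebraic closure produces a partial symplectic Jordan basis for the nonzero eigenvalues (this is where the paper separates regimes~(I) and~(II) from the nilpotent kernel), and Theorem~\ref{thm:zeros}, whose ``good tuple'' condition is exactly your exclusion of the partition $[3,1]$ and which handles all of regime~(III) at once. For case~(3) the paper does not work $L$-linearly; it writes down an explicit $\Psi_2$ with entries in $L$, solves for a commuting $D$ via Proposition~\ref{prop:symplectomorphic}, and reduces everything to the single equation $d_1\bar d_1=-4\alpha^2 k/\big(a(1-\alpha^2)\big)\in\DSq(\Qp,-c)$, from which $a\in\{1,h_p(c)\}$ drops out immediately. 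This is quicker than carrying an $L$-structure around, and it also makes uniqueness in case~(3) a one-line contradiction (two values of $a$ would force $h_p(c)\in\DSq(\Qp,-c)$).

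One point to tighten: the $[2,2]$-nilpotent overlap you flag is not a genuine ambiguity in the paper's scheme. In the existence argument, case~(2) is entered only from the branch ``$\lambda\in\Qp^\times$, $A$ not diagonalizable'', so $r=\lambda\ne 0$ there; the $[2,2]$ nilpotent is always routed to case~(1) with $c_1=c_2=0$ via Theorem~\ref{thm:zeros}. With that convention the Jordan form of $A$ (plus, in case~(1), the $2\times2$ block data from Theorem~\ref{thm:williamson}) does separate the four cases, and the uniqueness proof is short. You should build this into your statement rather than leaving it as an unresolved boundary issue.
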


The following statement quantifies precisely the amount of normal forms in Theorems \ref{thm:williamson4} and \ref{thm:williamson4-deg}.

\begin{maintheorem}[Number of $4$-by-$4$ $p$-adic matrix normal forms]\label{thm:num-forms1}
	\letpprime. Let $X_p,Y_p$ be the non-residue sets in Definition \ref{def:sets}. Let $h_p:Y_p\to\Qp$ be the non-residue function in Definition \ref{def:h}.
	\begin{enumerate}
		\item If $p\equiv 1\mod 4$, there are exactly $49$ infinite families of normal forms of $p$-adic $4$-by-$4$ matrices with two degrees of freedom, exactly $35$ infinite families with one degree of freedom, and exactly $20$ isolated normal forms, up to congruence via a symplectic matrix.
		\item If $p\equiv 3\mod 4$, there are exactly $32$ infinite families of normal forms of $p$-adic $4$-by-$4$ matrices with two degrees of freedom, exactly $27$ infinite families with one degree of freedom, and exactly $20$ isolated normal forms, up to congruence via a symplectic matrix.
		\item If $p=2$, there are exactly $211$ infinite families of normal forms of $p$-adic $4$-by-$4$ matrices with two degrees of freedom, exactly $103$ infinite families with one degree of freedom, and exactly $72$ isolated normal forms, up to congruence via a symplectic matrix.
	\end{enumerate}
	
	In the three cases above, the infinite families with two degrees of freedom are given by
	\[\Big\{\Big\{\begin{pmatrix}
		r & 0 & 0 & 0 \\
		0 & c_1r & 0 & 0 \\
		0 & 0 & s & 0 \\
		0 & 0 & 0 & c_2s
	\end{pmatrix}:r,s\in\Qp\Big\}:c_1,c_2\in X_p\Big\}
	\cup\Big\{\Big\{\begin{pmatrix}
		0 & s & 0 & r \\
		s & 0 & cr & 0 \\
		0 & cr & 0 & s \\
		r & 0 & s & 0
	\end{pmatrix}:r,s\in\Qp\Big\}:c\in Y_p\Big\}\]
	\[\cup\Big\{\Big\{\renewcommand{\arraystretch}{2}
	\begin{pmatrix}
		\dfrac{ac(r-bs)}{c-b^2} & 0 & \dfrac{sc-rb}{c-b^2} & 0 \\
		0 & \dfrac{-r(t_1+bt_2)-s(bt_1+ct_2)}{a} & 0 & -r(bt_1+ct_2)-sc(t_1+bt_2) \\
		\dfrac{sc-rb}{c-b^2} & 0 & \dfrac{r-bs}{a(c-b^2)} & 0 \\
		0 & -r(bt_1+ct_2)-sc(t_1+bt_2) & 0 & ac(-r(t_1+bt_2)-s(bt_1+ct_2))
	\end{pmatrix}:\]
	\[r,s\in\Qp\Big\}:(a,b)\in\Big\{(1,0),(a_1,b_1)\Big\},c,t_1,t_2,a_1,b_1\text{ in one row of Table \ref{table:canonical}}\Big\},\]
	those with one degree of freedom are
	\[\Big\{\Big\{\begin{pmatrix}
		r & 0 & 0 & 0 \\
		0 & c_1r & 0 & 0 \\
		0 & 0 & s & 0 \\
		0 & 0 & 0 & 0
	\end{pmatrix}:r\in\Qp\Big\}:c_1\in X_p,s\in Y_p\cup\{1\}\Big\}
	\cup\Big\{\Big\{\begin{pmatrix}
		0 & r & 0 & 0 \\
		r & 0 & 1 & 0 \\
		0 & 1 & 0 & r \\
		0 & 0 & r & 0
	\end{pmatrix}:r\in\Qp\Big\}\Big\}\]
	\[\cup\Big\{\Big\{\begin{pmatrix}
		a & 0 & 0 & r \\
		0 & 0 & cr & 0 \\
		0 & cr & a & 0 \\
		r & 0 & 0 & 0
	\end{pmatrix}:r\in\Qp\Big\}:c\in Y_p,a\in\{1,h_p(c)\}\Big\},\]
	and the isolated forms are
	\[\Big\{\begin{pmatrix}
		r & 0 & 0 & 0 \\
		0 & 0 & 0 & 0 \\
		0 & 0 & s & 0 \\
		0 & 0 & 0 & 0
	\end{pmatrix}:r,s\in Y_p\cup\{1\}\Big\}
	\cup\Big\{\begin{pmatrix}
		c & 0 & 0 & 0 \\
		0 & 0 & 0 & -c \\
		0 & 0 & 0 & 0 \\
		0 & -c & 0 & 0
	\end{pmatrix}:c\in Y_p\cup\{1\}\Big\}.\]
	
	This is in contrast with the real case, where there are exactly $4$ infinite families with two degrees of freedom, exactly $7$ infinite families with one degree of freedom and exactly $6$ isolated normal forms. Here by ``infinite family'' we mean a family of normal forms of the form $r_1M_1+r_2M_2+\ldots+r_kM_k$, where $r_i$ are parameters in $\Qp$ and $k$ is the number of degrees of freedom, and by ``isolated'' we mean a form that is not part of any family.
\end{maintheorem}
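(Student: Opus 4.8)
The plan is to read off the three displayed lists and the three triples of numbers directly from Theorems~\ref{thm:williamson4} and~\ref{thm:williamson4-deg}, which already carry all the geometric content; what remains is an organizational step plus a finite enumeration. First I would argue completeness. Given a symmetric $M\in\M_4(\Qp)$, either all eigenvalues of $\Omega_0^{-1}M$ are distinct, in which case Theorem~\ref{thm:williamson4} reduces $S^TMS$ to one of its three forms, or $\Omega_0^{-1}M$ has a repeated eigenvalue, in which case Theorem~\ref{thm:williamson4-deg} applies. I would then sort the union of the two lists by the number of free scalars $r,s$ occurring in a form: the two-parameter families are the three forms of Theorem~\ref{thm:williamson4} (in its case (1) one automatically has $c_1,c_2\in X_p$); the one-parameter families are case (1) of Theorem~\ref{thm:williamson4-deg} with exactly one of $c_1,c_2$ equal to $0$, together with its cases (2) and (3); and the isolated forms are case (1) of Theorem~\ref{thm:williamson4-deg} with $c_1=c_2=0$, together with its case (4). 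This is precisely the partition into the three displayed lists in the statement.

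Next I would check that no two entries of the lists are equal up to multiplication by a symplectic matrix. If $S^TMS=M'$ with $S$ symplectic then $\Omega_0^{-1}M'=S^{-1}(\Omega_0^{-1}M)S$, so the characteristic polynomial of $\Omega_0^{-1}M$, and with it the multiplicity pattern of its roots, is a symplectic invariant; together with $\rank M$ this already separates the two-parameter, one-parameter and isolated strata, so an equivalence can only occur inside one stratum. Inside a stratum the ``Furthermore'' clauses of Theorems~\ref{thm:williamson4} and~\ref{thm:williamson4-deg} say exactly that two normal forms reached from the same $M$ must coincide, up to exchanging the two $2$-by-$2$ diagonal blocks in the block-diagonal cases. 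The one genuinely new point is to see that the forms of case (3) attached to two different rows of Table~\ref{table:canonical} are inequivalent; for this I would use the finer invariants recorded in Figure~\ref{fig:formas}, namely the two successive extensions of $\Qp$ generated first by the squares of the eigenvalues of $\Omega_0^{-1}M$ and then by the eigenvalues themselves, which are constant along each family and take distinct values on distinct rows.

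It then remains to count. From Definition~\ref{def:sets} one has $|X_p|=7,5,11$ and $|Y_p|=3,3,7$ in the cases $p\equiv1\bmod 4$, $p\equiv3\bmod 4$, $p=2$, and Table~\ref{table:canonical} has $9$, $7$ and $69$ rows respectively; by Definition~\ref{def:h}, $h_p(c)\neq1$ for every $c\in Y_p$, so each set $\{1,h_p(c)\}$ has two elements. Enumerating the three displayed lists with these data --- unordered pairs $\{c_1,c_2\}$ from $X_p$ in case (1), a factor $2$ from $(a,b)\in\{(1,0),(a_1,b_1)\}$ for each table row, a factor $2$ from $a\in\{1,h_p(c)\}$, and the identifications dictated by the uniqueness clauses --- yields $(49,35,20)$, $(32,27,20)$ and $(211,103,72)$ for the numbers of two-parameter families, one-parameter families and isolated forms, which also fills in the relevant rows of Table~\ref{table:numforms}; the real counts $(4,7,6)$ come out the same way from the classical Weierstrass--Williamson list recalled in Section~\ref{sec:padic-williamson}.

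The essential obstacle is not in the existence part but in being certain there is no hidden redundancy: one must exhibit a complete set of invariants separating the $p$-adic normal forms (the pair of field extensions, the rank, and the remaining discrete data of each form), since without it one cannot rule out, for example, two case-(3) rows of Table~\ref{table:canonical}, or a degenerate specialization of a two-parameter family, collapsing onto the same symplectic orbit. Beyond that, the difficulty is purely one of scale: for $p=2$ Table~\ref{table:canonical} alone contributes $69$ rows, so the count in case (3) is a large but routine finite verification, best carried out --- and independently cross-checked --- by computer.
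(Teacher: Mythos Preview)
Your proposal is correct and is essentially the paper's own argument: the proof there simply reads off the lists from Theorems~\ref{thm:williamson4} and~\ref{thm:williamson4-deg}, uses their uniqueness clauses for inequivalence, and counts (e.g.\ $\binom{8}{2}+3+9\cdot2=49$ for $p\equiv1\bmod4$, and $\binom{12}{2}+7+(9{+}11{+}9{+}9{+}11{+}9{+}11)\cdot2=211$ for $p=2$). Your additional discussion of the separating invariants (characteristic polynomial, rank, the tower of extensions from Figure~\ref{fig:formas}) makes explicit what the paper leaves packaged inside the ``Furthermore'' clauses of those two theorems, but the route is the same.
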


\begin{remark}
	It is well known \cite{Serre-localfields} that all Galois extensions over $\Qp$ have a solvable Galois group, that is, the roots of all polynomials with $p$-adic coefficients can be expressed algebraically by extraction of successive radicals. However, since the general equation of degree $5$ is not solvable by radicals, there are no formulas, from degree $5$ onwards, to express the roots in terms of successive radicals. In other words, all our results are generalizable to any dimension with the same method we use here, but without explicit formulas for the local normal forms on symplectic manifolds of dimension $10$ or higher. These forms will be explicit for dimensions $6$ and $8$, though we do not carry this out because it includes hundreds and even thousands of possibilities for the local models even in dimension $6$.
\end{remark}

\begin{remark}
	Note that Theorems \ref{thm:num-forms2} and \ref{thm:num-forms1} refer to infinite families of matrices, but Theorem \ref{thm:num-integrable} does not mention families. This is because the equivalence between integrable systems we consider allows for a matrix $B$ to appear, as in equation \eqref{eq:integrable}.
\end{remark}

We can give a lower bound for the asymptotic behavior of the number of blocks of size $2n$ that appear in the normal forms of the matrices. In the real case, taking into account only the ``non-degenerate case'' where the eigenvalues of $\Omega_0^{-1}M$ are pairwise distinct, there are two infinite families of blocks with size two (each one with one degree of freedom), one family with size four (with two degrees of freedom) and no blocks with size greater than four: this is due to the fact that all irreducible polynomials in $\R$ have degree at most two, and has as a consequence that the number of families of normal forms of size $2n$ is quadratic in $n$. For the $p$-adic case, however, one has:

\begin{definition}\label{def:M}
	\letpprime\ and let $n$ be a positive integer. For each partition $P=(a_1,\ldots,a_k)$ of $n$, we define $M(P,p)\in\M_{2n}(\Qp)$ as the block-diagonal matrix whose blocks have sizes $(2a_1,\ldots,2a_k)$ and each block has a form which is itself block-diagonal, with blocks of the form
	\[\begin{pmatrix}
		1 & & & & & & & &\\
		& 0 & 1 & & & & & & \\
		& 1 & 0 & & & & & & \\
		& & & 0 & 1 & & & & \\
		& & & 1 & 0 & & & & \\
		& & & & & \ddots & & \\
		& & & & & & 0 & 1 & \\
		& & & & & & 1 & 0 & \\
		& & & & & & & & p
	\end{pmatrix}.\]
\end{definition}

Using the previous results we will prove the following asymptotic formulas:

\begin{maintheorem}[Asymptotic behavior of number of $(2n)$-by-$(2n)$ $p$-adic matrix normal forms]\label{thm:num-forms}
	\letpprime. Let $n$ be a positive integer. The number of $p$-adic families of non-degenerate normal forms of $(2n)$-by-$(2n)$ matrices up to congruence via a symplectic matrix, each family being of the form $r_1M_1+\ldots+r_nM_n$, where $r_i$ are parameters in $\Qp$, grows asymptotically at least with \[\frac{\e^{\pi\sqrt{2n/3}}}{4n\sqrt{3}}.\]
	Explicitly, if $P$ and $P'$ are distinct partitions of $n$ then the matrices $M(P,p)$ and $M(P',p)$ in Definition \ref{def:M} are not equivalent by multiplication by a symplectic matrix.
\end{maintheorem}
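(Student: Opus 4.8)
The asymptotic lower bound follows immediately from the Hardy–Ramanujan formula \cite{HarRam}: the number of partitions $p(n)$ of $n$ satisfies $p(n)\sim \e^{\pi\sqrt{2n/3}}/(4n\sqrt{3})$, so it suffices to exhibit an injection from partitions of $n$ into equivalence classes of $(2n)$-by-$(2n)$ $p$-adic normal forms. The plan is therefore to prove the ``explicit'' assertion: if $P=(a_1,\dots,a_k)$ and $P'=(a_1',\dots,a_{k'}')$ are distinct partitions of $n$, then $M(P,p)$ and $M(P',p)$ are not equivalent by multiplication by a symplectic matrix. The matrix $M(P,p)$ is block diagonal with one block of size $2a_i$ per part $a_i$, each block being exactly the ``single-part'' matrix $M((a_i),p)$ appearing in Definition \ref{def:M}; note also that this is precisely the matrix version of the component $f_{P,p}$ of Definition \ref{def:f}, so the argument here and the one for Theorem \ref{thm:num-integrable2} are the same.

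The strategy is to attach to each symplectic-equivalence class of $M(P,p)$ a combinatorial invariant from which $P$ can be recovered. First I would compute $\Omega_0^{-1}M(P,p)$: since $\Omega_0$ and $M(P,p)$ are both block diagonal with blocks matched to the parts of $P$, so is the product, and on the block corresponding to a part $a_i$ the matrix $\Omega_0^{-1}M((a_i),p)$ is a single Jordan-type block. A direct computation (a companion-like matrix) shows its characteristic polynomial on that block is, up to sign, $\lambda^{2a_i}-p$ — or more precisely a polynomial whose only root-data is a single ``$2a_i$-th root of $p$'' orbit — which is irreducible over $\Qp$ by the Eisenstein criterion at $p$, and moreover the block is non-derogatory (its minimal polynomial equals its characteristic polynomial, i.e. the block is a single Jordan block for each eigenvalue in $\overline{\Qp}$). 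Hence $\Omega_0^{-1}M(P,p)$ has, as a $\overline{\Qp}$-linear map, elementary divisors exactly $\{\lambda^{2a_i}-p : 1\le i\le k\}$ (each with multiplicity one), and crucially the multiset of \emph{degrees} of these elementary divisors is $\{2a_1,\dots,2a_k\}$, which determines $P$.

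Second, I would observe that if $S^TM(P,p)S=M(P',p)$ for a symplectic $S$, then $S^{-1}\Omega_0^{-1}M(P,p)\,S=\Omega_0^{-1}M(P',p)$: indeed $S^T\Omega_0 S=\Omega_0$ gives $\Omega_0^{-1}=S\Omega_0^{-1}S^T$, so $\Omega_0^{-1}M(P',p)=S\Omega_0^{-1}S^T\cdot S^{-T}M(P,p)S=S^{-1}(\Omega_0^{-1}M(P,p))S$. Thus $\Omega_0^{-1}M(P,p)$ and $\Omega_0^{-1}M(P',p)$ are conjugate over $\Qp$ (in fact by $S\in\mathrm{GL}_{2n}(\Qp)$), hence have the same rational canonical form, hence the same multiset of elementary divisor degrees, hence $P=P'$. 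This contradicts $P\ne P'$, proving the explicit claim; combined with Hardy–Ramanujan it gives the asymptotic statement. (Here I am using only the classical linear algebra fact that conjugate matrices over a field share their invariant factors — no symplectic classification is needed for the \emph{lower} bound, only the reduction via $S^T\Omega_0 S=\Omega_0$.)

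\emph{Main obstacle.} The only real work is the explicit computation that each single-part block $\Omega_0^{-1}M((a_i),p)$ is non-derogatory with Eisenstein-irreducible characteristic polynomial $\lambda^{2a_i}-p$ (up to normalization and sign conventions coming from the alternating $\pm1$ pattern of $\Omega_0$); one must track carefully how the $1$'s in the off-diagonal staircase of $M((a_i),p)$ interact with the $\pm1$'s of $\Omega_0^{-1}$ to confirm the companion-matrix shape, and check that the lone $p$ in the bottom-right entry is what produces the Eisenstein prime. Everything else — the passage from symplectic equivalence to $\mathrm{GL}_{2n}$-conjugacy, the invariance of elementary divisors under conjugacy, and the appeal to Hardy–Ramanujan — is standard. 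I expect this computation to be essentially the same one already used to analyze $f_{P,p}$ elsewhere in the paper, so it can be cited rather than redone.
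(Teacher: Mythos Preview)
Your proposal is correct and follows essentially the same route as the paper: both arguments rest on the Eisenstein irreducibility of the characteristic polynomial of each block of $\Omega_0^{-1}M((m),p)$ (the paper's Lemma~\ref{lemma:irred}, invoked via Lemma~\ref{lemma:numblocks}), together with the observation that symplectic equivalence of $M$ forces $\mathrm{GL}$-conjugacy of $\Omega_0^{-1}M$ (the forward direction of Proposition~\ref{prop:symplectomorphic}), and then Hardy--Ramanujan. Your write-up is somewhat more explicit---you spell out the rational-canonical-form invariant directly rather than appealing to the block-counting lemma---but the underlying mechanism (irreducible factors of degrees $2a_1,\dots,2a_k$ recover $P$) is identical; note only that the exact sign in the characteristic polynomial is $\lambda^{2m}\pm p$ depending on the parity of $m$ (compare Section~\ref{sec:real-example}), which does not affect Eisenstein irreducibility or your degree argument.
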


We will prove this result in \cite{CrePel-matrix} using the Hardy-Ramanujan formula \cite{HarRam}. We have computed explicit lower bounds for the number of families.

\begin{maintheorem}[Lower bounds of number of $(2n)$-by-$(2n)$ $p$-adic matrix normal forms]\label{thm:num-forms-lower-bound}
	Let $p\in\{2,3,5,7\}$. Let $n$ be a positive integer with $n\le 10$. The number of $p$-adic families of non-degenerate normal forms of $(2n)$-by-$(2n)$ matrices up to congruence via a symplectic matrix, each family being of the form $r_1M_1+\ldots+r_nM_n$, where $r_i$ are parameters in $\Qp$, and the number of $(2n)$-by-$(2n)$ blocks which may appear in those normal forms, is at least as follows:
	
	\upshape\footnotesize
	\begin{tabular}{|*{11}{c|}}\hline
		$2n$ & \multicolumn{2}{c|}{$\R$} & \multicolumn{2}{c|}{$\Q_2$} & \multicolumn{2}{c|}{$\Q_3$} & \multicolumn{2}{c|}{$\Q_5$} & \multicolumn{2}{c|}{$\Q_7$} \\ \cline{2-11}
		& blocks & forms & blocks & forms & blocks & forms & blocks & forms & blocks & forms \\ \hline
		2 & 2 & 2 & 11 & 11 & 5 & 5 & 7 & 7 & 5 & 5 \\ \hline
		4 & 1 & 4 & 145 & 211 & 17 & 32 & 21 & 49 & 17 & 32 \\ \hline
		6 & 0 & 6 & 2 & 1883 & 3 & 123 & 3 & 234 & 9 & 129 \\ \hline
		8 & 0 & 9 & 1 & 21179 & 2 & 495 & 4 & 1054 & 2 & 525 \\ \hline
		10 & 0 & 12 & 2 & 161343 & 3 & 1595 & 3 & 4021 & 3 & 1787 \\ \hline
		12 & 0 & 16 & 1 & 1374427 & 2 & 5111 & 4 & 14493 & 6 & 5874 \\ \hline
		14 & 0 & 20 & 2 & 9232171 & 3 & 14491 & 3 & 47462 & 3 & 17586 \\ \hline
		16 & 0 & 25 & 1 & 65570626 & 2 & 40244 & 4 & 148087 & 2 & 50614 \\ \hline
		18 & 0 & 30 & 2 & 397086458 & 3 & 103484 & 3 & 433330 & 9 & 137311 \\ \hline
		20 & 0 & 36 & 1 & 2469098766 & 2 & 259712 & 4 & 1217761 & 2 & 359463 \\ \hline
	\end{tabular}

	\normalsize\itshape
	For comparison, the table includes the exact number of forms in the real case in each dimension.
\end{maintheorem}

\section{Symplectic linear algebra over algebraically closed fields}\label{sec:algclosed}
In this section we prove several essential results about normal forms of matrices in an algebraically closed field. They are a crucial ingredient of our strategy to obtain general classifications of local normal forms of $p$-adic integrable systems, so we cannot postpone their proofs to \cite{CrePel-matrix} as we are doing with the majority of the results about $p$-adic matrices. As we will see, the problem in this case reduces to an equality of eigenvalues, and to an equality of normal forms if there are multiple eigenvalues.

\begin{definition}\label{def:symplectic}
	Let $n$ be a positive integer.
	\begin{itemize}
		\item Let $F$ be a field. A \emph{symplectic vector space} over $F$ is a pair $(V,\omega)$ where $V$ is a $2n$-dimensional vector space over $F$ and $\omega:V\times V\to F$ is a non-degenerate bilinear map such that $\omega(v,v)=0$ for all $v\in V$. We say that $\omega$ is a \emph{linear symplectic form.} If the characteristic of $F$ is not $2$, the last condition is equivalent to $\omega$ being antisymmetric: $\omega(v,w)=-\omega(w,v)$ for all $v,w\in V$.
		\item Let $F$ be a field. A \emph{linear symplectomorphism} between two symplectic vector spaces $(V_1,\omega_1)$ and $(V_2,\omega_2)$ over $F$ is a linear isomorphism $\phi:V_1\to V_2$ that preserves the symplectic form, that is, $\phi^*\omega_2=\omega_1$. In this case we say that $\omega_1$ and $\omega_2$ are \emph{linearly symplectomorphic}.
	\end{itemize}
\end{definition}

\begin{remark}
	By definition, an automorphism of a symplectic space is a linear symplectomorphism if and only if its matrix is a symplectic matrix.
\end{remark}

\begin{definition}
	Let $n$ be a positive integer. Let $F$ be a field with multiplicative identity element $1$. We define $\Omega_0$ as the $(2n)$-by-$(2n)$ matrix whose blocks are all
	\[\begin{pmatrix}
		0 & 1 \\
		-1 & 0
	\end{pmatrix},\]
	that is,
	\[\Omega_0=\begin{pmatrix}
		0 & 1 & & & & & \\
		-1 & 0 & & & & & \\
		& & 0 & 1 & & & \\
		& & -1 & 0 & & & \\
		& & & & \ddots & & \\
		& & & & & 0 & 1 \\
		& & & & & -1 & 0
	\end{pmatrix}.\]
	(It is also common to take $\Omega_0$ to have the blocks in the ``other'' diagonal.) $\Omega_0$ is called the \emph{standard symplectic form on $F^{2n}$} (which has the same expression as when $F=\R$ in Section \ref{sec:matrices-real}, keeping in mind that $1$ here represents an abstract multiplicative identity).
\end{definition}

We start with a result which finds a basis with good properties with respect to the symplectic form.

\begin{lemma}\label{lemma:eig}
	Let $n$ be a positive integer. Let $F$ be an algebraically closed field with characteristic different from $2$. Let $\Omega_0$ be the matrix of the standard symplectic form on $F^{2n}$ and let $M\in\M_{2n}(F)$ be a symmetric matrix such that the eigenvalues of $\Omega_0^{-1}M$ are pairwise distinct. Then there exists a basis $\{u_1,v_1,\ldots,u_n,v_n\}$ of $F^{2n}$ such that
	\begin{itemize}
		\item $u_i$ and $v_i$ are eigenvectors of $\Omega_0^{-1}M$ with eigenvalues of opposite sign, and
		\item the standard symplectic form is represented in this basis by a block-diagonal matrix with blocks of size two.
	\end{itemize}
\end{lemma}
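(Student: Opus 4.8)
The plan is to work directly with the endomorphism $A = \Omega_0^{-1}M$ and exploit the interaction between its eigenspace decomposition and the symplectic form. First I would observe that $A$ is $\Omega_0$-antisymmetric in the sense that $\Omega_0 A = M = M^T = A^T \Omega_0 = -(\Omega_0 A)^T$ is symmetric, equivalently $(\Omega_0 A)^T = \Omega_0 A$; from $\Omega_0^T = -\Omega_0$ this gives $A^T\Omega_0 = -\Omega_0 A$, i.e. $A$ is skew-adjoint with respect to the bilinear form $\langle u,v\rangle := u^T\Omega_0 v$. The standard consequence is that if $u$ is an eigenvector with eigenvalue $\lambda$ and $v$ an eigenvector with eigenvalue $\mu$, then $\lambda\langle u,v\rangle = \langle Au,v\rangle = -\langle u,Av\rangle = -\mu\langle u,v\rangle$, so $\langle u,v\rangle = 0$ unless $\mu = -\lambda$. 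In particular the nonzero eigenvalues come in pairs $\{\lambda,-\lambda\}$ (using that $\langle\cdot,\cdot\rangle$ is nondegenerate, an eigenvector cannot be isotropic against the whole space), and since the $2n$ eigenvalues are pairwise distinct, they are exactly $\pm\lambda_1,\ldots,\pm\lambda_n$ with all $\lambda_i \neq 0$ and all distinct in absolute value.

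Next I would pick, for each $i$, an eigenvector $u_i$ for $\lambda_i$ and an eigenvector $w_i$ for $-\lambda_i$. By the computation above, $\langle u_i, u_j\rangle = \langle u_i, w_j\rangle = \langle w_i,w_j\rangle = 0$ whenever $i \neq j$ (the relevant eigenvalue sums are nonzero, using distinctness), and also $\langle u_i,u_i\rangle = \langle w_i,w_i\rangle = 0$. The only pairings that can be nonzero are $\langle u_i, w_i\rangle$. Each of these must in fact be nonzero: the eigenvectors span $F^{2n}$ (distinct eigenvalues), so if $\langle u_i, w_i\rangle = 0$ then $u_i$ would be orthogonal to the entire basis, contradicting nondegeneracy of $\Omega_0$. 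Now rescale: replace $w_i$ by $v_i := w_i / \langle u_i, w_i\rangle$, so that $\langle u_i, v_i\rangle = 1$ (here characteristic $\neq 2$ is not even needed for the rescaling, but it is needed to guarantee $\lambda_i \neq -\lambda_i$, i.e. that the pairs are genuinely two-element). Then $\{u_1,v_1,\ldots,u_n,v_n\}$ is a basis in which the Gram matrix of $\langle\cdot,\cdot\rangle$ is block-diagonal with each $2\times 2$ block equal to $\begin{pmatrix} 0 & 1 \\ -1 & 0\end{pmatrix}$ — that is, $\Omega_0$ expressed in this basis is block-diagonal with blocks of size two, which is exactly the claim.

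The only genuinely delicate point is the non-isotropy argument: one must be sure that the span of the eigenvectors really is all of $F^{2n}$ so that the nondegeneracy of $\Omega_0$ forces $\langle u_i,v_i\rangle \neq 0$. Over an algebraically closed field with $2n$ distinct eigenvalues this is immediate since $A$ is diagonalizable, so I do not expect a real obstacle here; the main thing to be careful about is bookkeeping the pairing relations and confirming that $0$ is not among the eigenvalues (if $0$ were an eigenvalue, a single eigenvector would have to pair nontrivially with itself, impossible since $\langle u,u\rangle = 0$ always — so distinctness plus nondegeneracy rules this out, or alternatively $M$ would be singular). I would also remark that this lemma is essentially the symplectic eigenbasis statement that feeds into Proposition \ref{prop:symplectomorphic}, and that the resulting basis is unique up to the scalings $u_i \mapsto t_i u_i$, $v_i \mapsto t_i^{-1} v_i$ and swapping the roles within a pair — a fact that will be used when tracking how much freedom remains in the choice of $S$.
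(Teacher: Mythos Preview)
Your proof is correct and follows essentially the same route as the paper: both use the skew-adjointness relation $A^T\Omega_0=-\Omega_0A$ (the paper derives the eigenvalue pairing via $\det(\lambda I-A)=\det(\lambda I+A)$ instead, but then performs the identical orthogonality computation $\lambda\langle u,v\rangle=-\mu\langle u,v\rangle$). The one minor difference is that you rescale $v_i$ to make the basis symplectic, whereas the paper deliberately stops short of this and only records that $\Omega_0$ is block-diagonal; as remarked just after the lemma, the rescaling is avoided because in later applications one wants $v_i$ to be the conjugate of $u_i$, and rescaling would destroy that relation.
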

\begin{proof}
	For ease of notation, we write the proof for $n=2$, but the proof is the same for any $n$.
	
	Let $A=\Omega_0^{-1}M$. We have that
	\begin{align*}
		\det(\lambda I-A) & =\det(\lambda I-\Omega_0^{-1}M) \\
		& =\det(\lambda I-(\Omega_0^{-1}M)^T) \\
		& =\det(\lambda I+M\Omega_0^{-1}) \\
		& =\det(\lambda \Omega_0+M)\det(\Omega_0^{-1}) \\
		& =\det(\lambda I+\Omega_0^{-1}M) \\
		& =\det(\lambda I+A).
	\end{align*}
	This implies that the eigenvalues of $A$ must come in pairs, that is, if $\lambda$ is an eigenvalue, $-\lambda$ also is. In particular, $0$ is not an eigenvalue, because it would be at least double, contradicting the hypothesis. So the eigenvalues are $\lambda,-\lambda,\mu,-\mu$, for $\lambda,\mu\in F^*$.
	
	Now let $w_1$ and $w_2$ be two eigenvectors of $A$ with eigenvalues $\alpha_1$ and $\alpha_2$, such that $\alpha_1\ne-\alpha_2$. Then
	\begin{align*}
		\alpha_1 w_1^T\Omega_0 w_2 & =(\Omega_0^{-1}Mw_1)^T\Omega_0 w_2 \\
		& =w_1^TM(-\Omega_0^{-1})\Omega_0 w_2 \\
		& =-w_1^TMw_2 \\
		& =-w_1^T\Omega_0\Omega_0^{-1}Mw_2 \\
		& =-\alpha_2w_1^T\Omega_0 w_2.
	\end{align*}
	As $\alpha_1\ne-\alpha_2$, this implies $w_1^T\Omega_0 w_2=0$.
	
	Let $u_1,v_1,u_2,v_2$ be the eigenvectors of $\lambda,-\lambda,\mu,-\mu$, respectively. By the previous result, $w_1^T\Omega_0 w_2=0$ for any two vectors $w_1,w_2\in\{u_1,v_1,u_2,v_2\}$ which are not a pair $u_i,v_i$. We call $\Psi$ the matrix with $(u_1,v_1,u_2,v_2)$ as columns. Then,
	\begin{align*}
		\Psi^T\Omega_0\Psi & =\begin{pmatrix}
			0 & u_1^T\Omega_0 v_1 & 0 & 0 \\
			v_1^T\Omega_0 u_1 & 0 & 0 & 0 \\
			0 & 0 & 0 & u_2^T\Omega_0 v_2 \\
			0 & 0 & v_2^T\Omega_0 u_2 & 0
		\end{pmatrix} \\
		& =\begin{pmatrix}
			0 & u_1^T\Omega_0 v_1 & 0 & 0 \\
			-u_1^T\Omega_0 v_1 & 0 & 0 & 0 \\
			0 & 0 & 0 & u_2^T\Omega_0 v_2 \\
			0 & 0 & -u_2^T\Omega_0 v_2 & 0
		\end{pmatrix},
	\end{align*}
	as we wanted.
\end{proof}

The basis in Lemma \ref{lemma:eig} is almost but not quite a symplectic basis:
\begin{definition}
	Let $n$ be a positive integer. Let $F$ be a field. We say that a basis $\{u_1,v_1,\ldots,u_n,v_n\}$ of $F^{2n}$ is \emph{symplectic} if, for any two vectors $w_1,w_2$ in the basis, $w_1^T\Omega_0w_2=1$ if $w_1=u_i$ and $w_2=v_i$ for some $i$ with $1\le i\le n$, and otherwise $w_1^T\Omega_0w_2=0$. This condition is equivalent to saying that the matrix in $\M_{2n}(F)$ with $u_1,v_1,\ldots,u_n,v_n$ as columns is symplectic.
\end{definition}

We can rescale the vectors $v_i$ in Lemma \ref{lemma:eig} such that the basis becomes symplectic. But this may break the structure of the eigenvectors: for example, if $F=\C$ and $-\lambda=\bar{\lambda}$, we can take $v_1=\bar{u}_1$, which will no more hold after rescaling $v_1$. We leave the lemma as such because we do not need that rescaling.

In the case where the eigenvalues are not all different, we can do something similar to Lemma \ref{lemma:eig}.

\begin{lemma}\label{lemma:eig2}
	Let $n$ be a positive integer. Let $F$ be an algebraically closed field with characteristic different from $2$ and let $M\in\M_{2n}(F)$ be a symmetric matrix. Let $\Omega_0$ be the matrix of the standard symplectic form on $F^{2n}$. Let $A=\Omega_0^{-1}M$. Then, the number of nonzero eigenvalues of $A$ is even, that is, $2m$ for some integer $m$ with $0\le m\le n$, and there exists a set \[\Big\{u_1,v_1,\ldots,u_m,v_m\Big\}\subset F^{2n}\] which satisfies the following properties:
	\begin{itemize}
		\item $Au_i=\lambda_iu_i+\mu_iu_{i-1}$ and $Av_i=-\lambda_iv_i+\mu_{i+1}v_{i+1}$ for $1\le i\le m$, where $\lambda_i\in F$, $\mu_i=0$ or $1$, $\mu_1=\mu_{m+1}=0$, and $\mu_i=1$ only if $\lambda_i=\lambda_{i-1}$. (That is to say, the vectors are a ``Jordan basis''.)
		\item The vectors can be completed to a symplectic basis: given $w_1,w_2$ in the set, $w_1^T\Omega_0w_2=1$ if $w_1=u_i,w_2=v_i$ for some $i$ with $1\le i\le m$, and otherwise $w_1^T\Omega_0w_2=0$.
	\end{itemize}
\end{lemma}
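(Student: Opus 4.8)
The plan is to recast everything in terms of the operator $A=\Omega_0^{-1}M$. Because $M$ is symmetric, the identity $M^T=M$ is equivalent to $A^T\Omega_0=-\Omega_0 A$, i.e.\ $\langle Ax,y\rangle=-\langle x,Ay\rangle$ for the symplectic pairing $\langle x,y\rangle:=x^T\Omega_0 y$ (this is precisely the computation opening the proof of Lemma \ref{lemma:eig}). So the statement is a structure result for a skew-adjoint operator on a symplectic $F$-vector space, and I would build the required vectors out of the generalized eigenspace decomposition of $A$.

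The first step is the orthogonality of generalized eigenspaces. Write $F^{2n}=\bigoplus_\lambda V_\lambda$ with $V_\lambda=\ker(A-\lambda)^{2n}$. Iterating $\langle(A-\lambda)x,y\rangle=-\langle x,(A+\lambda)y\rangle$ gives $\langle(A-\lambda)^N x,y\rangle=(-1)^N\langle x,(A+\lambda)^N y\rangle$. If $\mu\neq-\lambda$ then $A+\lambda$ acts invertibly on $V_\mu$ (it is $(\mu+\lambda)I$ plus a nilpotent), so every $y\in V_\mu$ equals $(A+\lambda)^N z$ with $z\in V_\mu$; choosing $N$ with $(A-\lambda)^N x=0$ shows $\langle V_\lambda,V_\mu\rangle=0$. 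Since $\langle\cdot,\cdot\rangle$ is nondegenerate, the induced pairing $V_\lambda\times V_{-\lambda}\to F$ is then perfect, so $\dim V_\lambda=\dim V_{-\lambda}$; as $\operatorname{char}F\neq2$, $\lambda=-\lambda$ forces $\lambda=0$, hence the nonzero eigenvalues split into pairs $\{\lambda,-\lambda\}$ of equal multiplicity, their total multiplicity being $2m$ with $0\le m\le n$. Moreover $V_0$ and $W:=\bigoplus_{\lambda\neq0}V_\lambda$ are $\Omega_0$-orthogonal, the form is nondegenerate on each, and $\dim W=2m$.

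The second step is the construction. Pick a set $\mathcal{S}$ of nonzero eigenvalues meeting each pair $\{\lambda,-\lambda\}$ exactly once, so that $W=\bigoplus_{\lambda\in\mathcal{S}}W_\lambda$, $W_\lambda:=V_\lambda\oplus V_{-\lambda}$, is an $A$-invariant, $\Omega_0$-orthogonal decomposition into nondegenerate pieces; it suffices to produce the vectors inside each $W_\lambda$ and then adjoin an arbitrary symplectic basis of the symplectic space $V_0$. Inside $W_\lambda$: choose a Jordan basis $u_1,\dots,u_p$ of $V_\lambda$ for $A|_{V_\lambda}=\lambda I+N$ ($N$ nilpotent), with the chains indexed by consecutive integers and each chain's eigenvector first, so that $Au_i=\lambda u_i+\mu_iu_{i-1}$, $\mu_i\in\{0,1\}$, $\mu_i=0$ exactly at chain starts; then let $v_1,\dots,v_p\in V_{-\lambda}$ be the dual basis for the perfect pairing, $\langle u_i,v_j\rangle=\delta_{ij}$. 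Since $2\lambda\neq0$, both $V_\lambda$ and $V_{-\lambda}$ are isotropic by Step 1, so $\{u_i,v_i\}$ is a symplectic basis of $W_\lambda$, and skew-adjointness gives $Av_j=\sum_k\langle u_k,Av_j\rangle v_k=-\sum_k\langle Au_k,v_j\rangle v_k=-\lambda v_j-\mu_{j+1}v_{j+1}$ (with $\mu_{p+1}:=0$), which is the claimed relation for the $v$-chains, the coefficient $\mu_{j+1}$ being nonzero only when the $u$-chain — hence the $v$-chain — continues, i.e.\ only when $\lambda_{j+1}=\lambda_j$. Collecting over $\lambda\in\mathcal{S}$ and relabeling $1,\dots,m$ yields $\{u_1,v_1,\dots,u_m,v_m\}$ with all the asserted properties (the precise sign of the off-diagonal entries being a matter of normalization of the Jordan basis).

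The main obstacle is Step 2: one cannot pick Jordan bases of $V_\lambda$ and of $V_{-\lambda}$ independently and expect the cross Gram matrix $(\langle u_i,v_j\rangle)$ to be the identity. The move that reconciles the Jordan structure with the symplectic normalization is to fix a Jordan basis on one side and take its \emph{dual} basis on the other, and then observe, through $\langle u_k,Av_j\rangle=-\langle Au_k,v_j\rangle$, that the matrix of $A$ on the $v$-side is forced to be $-(\text{its matrix on the }u\text{-side})^{T}$, hence automatically in "dual Jordan" form. Everything else is bookkeeping: that $\mu_i=1$ only across indices sharing the same $\lambda_i$ is immediate since the $\mu$'s are nonzero only within a single Jordan block of a single $V_\lambda$; and that the $u_i,v_j$ extend to a symplectic basis of $F^{2n}$ holds because their Gram matrix is the standard symplectic one and they span the nondegenerate subspace $W$, whose orthogonal complement $V_0$ is again symplectic.
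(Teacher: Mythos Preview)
Your proof is correct and is, at its core, the same construction as the paper's: pick a Jordan basis $u_1,\dots,u_m$ on the ``positive'' side and let $v_1,\dots,v_m$ be the $\Omega_0$-dual vectors on the ``negative'' side. The paper realizes this concretely by taking the $u_i$ as the first $m$ columns of a matrix $\Phi$ putting $A$ in Jordan form and the $v_i$ as the first $m$ columns of $(\Phi^T\Omega_0)^{-1}$; the relation $Av_i=-\lambda_iv_i-\mu_{i+1}v_{i+1}$ then drops out of the identity $\Phi^T\Omega_0 A(\Phi^T\Omega_0)^{-1}=-(\Phi^{-1}A\Phi)^T$, and the isotropy $u_i^T\Omega_0 u_j=0$ is checked by an induction on $i+j$. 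You reach the same destination through a more coordinate-free route: you first prove that generalized eigenspaces $V_\lambda$ and $V_\mu$ are $\Omega_0$-orthogonal whenever $\mu\neq-\lambda$ (so in particular each $V_\lambda$ with $\lambda\neq0$ is isotropic, bypassing the paper's induction), deduce the pairing $V_\lambda\times V_{-\lambda}\to F$ is perfect, and then take the $v_j$ as the abstract dual basis. Your derivation of $Av_j$ via $\langle u_k,Av_j\rangle=-\langle Au_k,v_j\rangle$ is exactly the skew-adjointness that underlies the paper's matrix identity. The two arguments are equivalent; yours is a bit cleaner conceptually, while the paper's makes the global matrix $\Phi$ explicit, which is convenient for the later sections where concrete change-of-basis matrices are manipulated.
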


\begin{proof}
	Let $J$ be the Jordan form of $A$ and $\Psi$ such that $\Psi^{-1}A\Psi=J$. We have that
	\begin{align*}
		J & =\Psi^{-1}A\Psi \\
		& =\Psi^{-1}\Omega_0^{-1}M\Psi \\
		& =\Psi^{-1}\Omega_0^{-1}M\Omega_0^{-1}\Omega_0\Psi \\
		& =\Psi^{-1}\Omega_0^{-1}(-\Omega_0^{-1}M)^T\Omega_0\Psi \\
		& =\Psi^{-1}\Omega_0^{-1}(-\Psi J\Psi^{-1})^T\Omega_0\Psi \\
		& =(\Psi^T\Omega_0\Psi)^{-1}(-J^T)\Psi^T\Omega_0\Psi.
	\end{align*}
	But $J$ can only be similar to $-J^T$ if for each block having $\lambda$ in the diagonal there is another having $-\lambda$ in the diagonal, and with the same size. We can split the blocks in three parts, that is, there is a $\Phi$ such that
	\begin{equation}\label{eq:matrix}
		\Phi^{-1}A\Phi=\begin{pmatrix}
			J_+ & 0 & 0 \\
			0 & J_- & 0 \\
			0 & 0 & J_0
		\end{pmatrix},
	\end{equation}
	for some matrices $J_+,J_-$ and $J_0$ in Jordan form, such that $J_+$ and $J_-$ have eigenvalues with opposite sign and $J_0$ has only $0$ as eigenvalue. Let $m$ be the size of $J_+$ and $J_-$. Now $2m$ is the number of nonzero eigenvalues.
	
	Let the first $m$ columns of $\Phi$ be $u_1,\ldots,u_m$. Because of \eqref{eq:matrix}, we have
	\[A u_i=\lambda_i u_i+\mu_i u_{i-1},\]
	for adequate $\lambda_i$ and $\mu_i$, with $\mu_i=0$ or $1$.
	
	Now we change sign and transpose, getting
	\begin{equation}\label{eq:matrix2}
		-(\Phi^{-1}A\Phi)^T=\begin{pmatrix}
			-J_+^T & 0 & 0 \\
			0 & -J_-^T & 0 \\
			0 & 0 & -J_0^T
		\end{pmatrix}
	\end{equation}
	The left-hand side equals
	\begin{align*}
		-\Phi^TA^T(\Phi^{-1})^T & =\Phi^TM\Omega_0^{-1}(\Phi^{-1})^T \\
		& =\Phi^T\Omega_0A\Omega_0^{-1}(\Phi^{-1})^T \\
		& =\Phi^T\Omega_0A(\Phi^T\Omega_0)^{-1}.
	\end{align*}
	
	Again, let the first $m$ columns of $(\Phi^T\Omega_0)^{-1}$ be $v_1,\ldots,v_m$. By (\ref{eq:matrix2}), we have
	\[A v_i=-\lambda_i v_i-\mu_{i+1} v_{i+1}.\]
	
	It is left to prove that the $2m$ vectors form a partial symplectic basis. To do this, first we see that $u_i^T\Omega_0u_j=0$ for any $i,j$, by induction on $i+j$. The base case is when $i=j=1$, which is trivial. Supposing it true for $(i-1,j)$ and $(i,j-1)$, we prove it for $(i,j)$:
	\begin{align*}
		\lambda_i u_i^T\Omega_0 u_j & =(\Omega_0^{-1}Mu_i-\mu_i u_{i-1})^T\Omega_0 u_j \\
		& =u_i^TM(-\Omega_0^{-1})\Omega_0 u_j-\mu_i\cdot 0 \\
		& =-u_i^TMu_j \\
		& =-u_i^T\Omega_0\Omega_0^{-1}Mu_j \\
		& =-u_i^T\Omega_0(\lambda_j u_j+\mu_j u_{j-1}) \\
		& =-\lambda_j u_i^T\Omega_0 u_j
	\end{align*}
	which implies that $u_i^T\Omega_0 u_j=0$ because $\lambda_i\ne-\lambda_j$ (the opposites of the eigenvalues in the part $J_+$ are all in $J_-$). Analogously we prove that $v_i^T\Omega_0 v_j=0$, making the induction backwards.
	
	Finally, $u_i^T\Omega_0 v_j$ is the element in the position $(i,j)$ of $\Phi^T\Omega_0(\Phi^T\Omega_0)^{-1}$, which is $1$ if $i=j$ and $0$ otherwise, so the vectors can be completed to a symplectic basis.
\end{proof}

\section{Normal forms of $p$-adic singularities}\label{sec:singularities}

The Weierstrass-Williamson's classification of $p$-adic matrices given in Section \ref{sec:padic-williamson} can be used to classify critical points of $p$-adic analytic functions.

We refer to appendix \ref{sec:critical} for the definition of analytic function and critical point of a function on a $p$-adic manifold. It does not make sense to talk about the rank of such a critical point, because there is only one function and consequently only one differential form.

\begin{definition}[Non-degenerate critical point of $p$-adic analytic function, symplectic sense]\label{def:nondeg}
	Let $n$ be a positive integer. \letpprime. Let $(M,\omega)$ be a $2n$-dimensional $p$-adic analytic symplectic manifold. Let $f:M\to\Qp$ be a $p$-adic analytic function and let $m\in M$ be a critical point of $f$ (i.e. $\dd f(m)=0$). Let $\Omega$ be the matrix of $\omega$. We say that $m$ is \emph{non-degenerate} if the eigenvalues of $\Omega^{-1}\dd^2 f(m)$ are all distinct.
\end{definition}

This is not the usual notion of non-degenerate critical point (which states that the Hessian of $f$ is invertible), but the two are related as the following proposition shows.

\begin{proposition}\label{prop:nondeg}
	Let $n$ be a positive integer. \letpprime. Let $M$ be a $p$-adic analytic $2n$-dimensional manifold, $f:M\to \Qp$ a $p$-adic analytic function, and $m$ a critical point of $f$. Then the following are equivalent:
	\begin{enumerate}
		\item $m$ is a non-degenerate critical point of $f$ in the usual sense;
		\item There exists a linear symplectic form $\omega$ such that $m$ is a non-degenerate critical point of $f:(M,\omega)\to \Qp$ in the symplectic sense (Definition \ref{def:nondeg}).
		\item There exist infinitely many linear symplectic forms $\omega$ such that $m$ is a non-degenerate critical point of $f:(M,\omega)\to \Qp$ in the symplectic sense.
	\end{enumerate}
\end{proposition}

\begin{proof}
	Suppose (2) holds. Let $\Omega$ be the matrix of $\omega$. Then $\Omega^{-1}\dd^2 f(m)$ has all eigenvalues distinct. Applying Lemma \ref{lemma:eig} to the Hessian, if zero was an eigenvalue, it would be at least double, contradicting (2). So $\Omega^{-1}\dd^2 f(m)$ is invertible, which implies $\dd^2 f(m)$ is invertible and (1) holds.
	
	Now suppose (1) holds. Let $H=\dd^2 f(m)$. We first solve the problem for $H$ diagonal and then the general case.
	
	If $H$ is diagonal, (1) means that all diagonal elements are nonzero. Let $h_i$ be the $i$-th diagonal element. We will take $\Omega^{-1}$ with $a_i$ in the row $2i-1$ and column $2i$, $-a_i$ in the row $2i$ and column $2i-1$, and $0$ the rest. After multiplying $\Omega^{-1}$ by $H$, $a_i$ becomes $a_ih_{2i-1}$ and $-a_i$ becomes $-a_ih_{2i}$. The eigenvalues are \[\pm a_i\sqrt{-h_{2i-1}h_{2i}}\text{ for }1\le i\le n.\] Since $\Qp$ is infinite, it is always possible to choose $a_i$ so that these values are all different, independently of whether they are in $\Qp$ or not (just choose each $a_i$ in turn, and there will always be a possible value).
	
	In the general case, we can apply Gram-Schmidt orthogonalization to the canonical basis to obtain a basis $\{v_1,\ldots,v_n\}$ such that $v_i^THv_j\ne 0$ if and only if $i=j$. Taking these vectors as columns, we have a matrix $M$ such that $M^THM$ is diagonal. Applying the diagonal case to this matrix, we obtain an antisymmetric $\Omega$ such that $\Omega^{-1} M^THM$ has all eigenvalues different. This matrix is similar to $M\Omega^{-1}M^TH$, so this has also all eigenvalues different, and $M\Omega^{-1}M^T$ is the matrix we want.
	
	That (3) implies (2) is trivial. If (2) holds, $\Omega^{-1}M$ has all eigenvalues different and the same happens for any perturbation of $\Omega$, hence (3) holds.
\end{proof}

In the following when we speak of non-degenerate critical points of a function we always do it in the symplectic sense of Definition \ref{def:nondeg}.

In the real case, as a consequence of the Weierstrass-Williamson classification, it is always possible to choose local symplectic coordinates $(x_1,\xi_1,\ldots,x_n,\xi_n)$ with the origin at any $m\in M$ such that
\[f=\sum_{i=1}^nr_ig_i+\ocal(3)\]
for some $r_i\in\R$, where $g_i:V\to\R$ has one of the following forms: $(x_i^2+\xi_i^2)/2$ (\textit{elliptic component}), $x_i\xi_i$ (\textit{hyperbolic component}), or $x_i\xi_{i+1}-x_{i+1}\xi_i$ with the next function equal to $x_i\xi_i+x_{i+1}\xi_{i+1}$ (\textit{focus-focus component}). In the $p$-adic case we are also able to make this conclusion.

\begin{lemma}\label{lemma:hessian}
	Let $n$ be a positive integer. \letpprime. Let $M$ be a $p$-adic analytic manifold of dimension $2n$ and let $m$ be a non-degenerate critical point of a $p$-adic analytic function $f:M\to \Qp$. Then there exists an open neighborhood $U$ of $m$ and local coordinates $(x_1,\ldots,x_n)$ on $U$ with the origin at $m$ such that the restriction of $f$ to $U$, that is, $f|_U:U\to\Qp$ is given by a power series
	\[\sum_{I\in\N^n,i_1+\ldots+i_n\ge 2}a_Ix_1^{i_1}\ldots x_n^{i_n}.\]
	Moreover, the matrix in $\M_{2n}(\Qp)$ with the coefficient of $x_ix_j$ in the row $i$ and column $j$, for $i\ne j$, and the coefficient of $x_i^2$ multiplied by $2$ in the row and column $i$, is exactly the Hessian of $f$ at $m$ in the coordinates $x_1,\ldots,x_n$.
\end{lemma}

\begin{proof}
	By definition, $f$ is given by a power series converging in some open set $U$ which contains $m$. The center of the power series can be arbitrarily chosen in $U$, which means that we can choose $m$ as center. The degree $1$ terms are $0$ because $m$ is a critical point, and the degree $2$ terms are of the form $x^THx/2$ for some matrix $H\in\M_{2n}(\Qp)$. Differentiating twice, we get that the Hessian of $f$ is precisely $H$.
\end{proof}

\begin{corollary}[Normal form of critical points in dimension $2$]\label{cor:williamson}
	\letpprime. Let $(M,\omega)$ be a $p$-adic symplectic manifold of dimension $2$ and let $f:M\to \Qp$ be a $p$-adic analytic function. Let $m\in M$ be a critical point of $f$. Then there are local symplectic coordinates $(x,\xi)$ with the origin at $m$ such that $f-f(m)$ coincides with $r(x^2+c\xi^2)$ up to order $2$, for some $r\in\Qp$ and $c\in X_p$, or $r\in Y_p\cup\{1\}$ and $c=0$. Furthermore, if $f-f(m)$ has this form for two different local symplectic coordinates with the origin at $m$, then the two forms coincide.
\end{corollary}

\begin{proof}
	By Theorem \ref{thm:darboux}, there exists symplectic coordinates $(x,\xi)$ in a neighborhood of $m$ in $M$. Hence, we may assume that $M$ is a neighborhood of the origin in $(\Qp)^2$ and $\omega=\omega_0$.
	
	Applying Theorem \ref{thm:williamson} to $\dd^2 f$, we get a symplectic matrix $S$, which is the matrix of a linear symplectomorphism $\phi$, such that
	\[\dd^2(\phi^*f)=S^T\dd^2fS=\begin{pmatrix}
		r & 0 \\
		0 & cr
	\end{pmatrix}=\dd^2(rx^2+cr\xi^2).\]
	By Lemma \ref{lemma:hessian}, $f$ has the desired form, and applying $\phi$ to the symplectic form at every point of $M$ preserves the symplectic form $\omega_0$. Uniqueness follows from Theorem \ref{thm:williamson}.
\end{proof}

\begin{corollary}\label{cor:num-forms}
	\letpprime. Let $(M,\omega)$ be a $p$-adic analytic symplectic $2$-manifold. Let $X_p,Y_p$ be the non-residue sets in Definition \ref{def:sets}. Then the following statements hold.
	\begin{enumerate}
		\item If $p\equiv 1\mod 4$, there are exactly $7$ families of local normal forms for a non-degenerate critical point of a $p$-adic analytic function $f:(M,\omega)\to\Qp$ up to local symplectomorphisms centered at the critical point, where the normal forms in each family differ by multiplication by a constant $r$, and exactly $4$ normal forms for a degenerate critical point which only differ in the constant $r$.
		\item If $p\equiv 3\mod 4$, there are exactly $5$ families of local normal forms for a non-degenerate critical point of a $p$-adic analytic function $f:(M,\omega)\to\Qp$ up to local symplectomorphisms centered at the critical point, where the normal forms in each family differ by multiplication by a constant $r$, and exactly $4$ normal forms for a degenerate critical point which only differ in the constant $r$.
		\item If $p=2$, there are exactly $11$ families of local normal forms for a non-degenerate critical point of a $p$-adic analytic function $f:(M,\omega)\to\Qp$ up to local symplectomorphisms centered at the critical point, where the normal forms in each family differ by multiplication by a constant $r$, and exactly $8$ normal forms for a degenerate critical point which only differ in the constant $r$.
	\end{enumerate}
	
	In the three cases the above normal forms for a non-degenerate point are given by
	\[\Big\{\Big\{r(x^2+c\xi^2):r\in\Qp\Big\}:c\in X_p\Big\}\]
	and those for a degenerate point are given by
	\[\Big\{rx^2:r\in Y_p\cup\{1\}\Big\}.\]
\end{corollary}

\begin{proof}
	This follows from Theorem \ref{thm:num-forms2} and Corollary \ref{cor:williamson}.
\end{proof}

\begin{corollary}[Normal form of non-degenerate critical points in dimension $4$]\label{cor:williamson4}
	\letpprime. Let $X_p,Y_p,\mathcal{C}_i^k,\mathcal{D}_i^k$ be the non-residue sets and coefficient functions in Definition \ref{def:sets}. Let $(M,\omega)$ be a $p$-adic symplectic manifold of dimension $4$ and let $f:M\to \Qp$ be a $p$-adic analytic function. Let $m\in M$ be a non-degenerate critical point of $f$. Then there are local symplectic coordinates $(x,\xi,y,\eta)$ with the origin at $m$ such that in these coordinates we have: \[f-f(m)=rg_1+sg_2+\ocal(3),\] where $g_1$ and $g_2$ have one of the following forms:
	\begin{enumerate}
		\item $g_1(x,\xi,y,\eta)=x^2+c_1\xi^2$ and $g_2(x,\xi,y,\eta)=y^2+c_2\eta^2$, for $c_1,c_2\in X_p$.
		\item $g_1(x,\xi,y,\eta)=x\eta+cy\xi$ and $g_2(x,\xi,y,\eta)=x\xi+y\eta$, for $c\in Y_p$.
		\item
		\[g_k(x,\xi,y,\eta)=\sum_{i=0}^{2}\mathcal{C}_i^k(c,t_1,t_2,a,b)x^iy^{2-i}+\sum_{i=0}^{2}\mathcal{D}_i^k(c,t_1,t_2,a,b)\xi^i\eta^{2-i},\]
		for $k\in\{1,2\}$, where $c,t_1$ and $t_2$ correspond to one row of Table \ref{table:canonical} and $(a,b)$ are either $(1,0)$ or $(a_1,b_1)$ of the corresponding row.
	\end{enumerate}
	If $f-f(m)$ has this form for two different local symplectic coordinates, then the two forms coincide, except perhaps for swapping $g_1$ and $g_2$ at point (1).
\end{corollary}

\begin{proof}
	Analogously to the proof of Corollary \ref{cor:williamson}, we first apply Theorem \ref{thm:darboux} to transform a neighborhood of $m$ in $M$ to a neighborhood of the origin in $(\Qp)^4$ with the symplectic form $\omega_0$. Then we apply Theorem \ref{thm:williamson4} to the Hessian of $f$, and by Lemma \ref{lemma:hessian}, $f$ has the desired form. The three cases (1), (2) and (3) for the resulting matrix correspond to the three cases of this corollary, because
	\[\dd^2(\phi^*f)=S^T\dd^2 fS=r\dd^2 g_1+s\dd^2 g_2=\dd^2(rg_1+sg_2).\qedhere\]
\end{proof}

\begin{corollary}[Normal form of degenerate critical points in dimension $4$]\label{cor:williamson4-deg}
	\letpprime. let $X_p,Y_p$ be the non-residue sets in Definition \ref{def:sets}. Let $h_p:Y_p\to\Qp$ be the non-residue function in Definition \ref{def:h}. Let $(M,\omega)$ be a $p$-adic symplectic manifold of dimension $4$ and let $f:M\to \Qp$ be a $p$-adic analytic function. Let $m\in M$ be a degenerate critical point of $f$. Then there are local symplectic coordinates $(x,\xi,y,\eta)$ with the origin at $m$ such that $f-f(m)$ coincides with one of the following forms up to order $2$:
	\begin{enumerate}
		\item $r(x^2+c_1\xi^2)/2+s(y^2+c_2\eta^2)/2$, for some $c_1,c_2\in X_p\cup\{0\}$ and $r,s\in\Qp$. If $c_1=0$, $r$ can be taken in $Y_p\cup\{1\}$, and if $c_2=0$, $s$ can be taken in $Y_p\cup\{1\}$.
		\item $r(x\xi+y\eta)+y\xi$, for some $r\in\Qp$.
		\item $r(x\eta+cy\xi)+a(x^2+y^2)/2$, for some $r\in\Qp,c\in Y_p,a\in \{1,h_p(c)\}$.
		\item $c(x^2/2+\xi\eta)$, for some $c\in Y_p\cup\{1\}$.
	\end{enumerate}
	Furthermore, if $f-f(m)$ has this form for two different local symplectic coordinates, then the two forms coincide, except perhaps for swapping $(r,c_1)$ and $(s,c_2)$ at point (1).
\end{corollary}

\begin{proof}
	It is analogous the previous two proofs, but with Theorem \ref{thm:williamson4-deg}.
\end{proof}

\begin{corollary}\label{cor:num-williamson4}
	\letpprime. Let $(M,\omega)$ be a $p$-adic analytic symplectic $4$-manifold. Let $X_p,Y_p,\mathcal{C}_i^k,\mathcal{D}_i^k$ be the non-residue sets and coefficient functions in Definition \ref{def:sets}. Let $h_p:Y_p\to\Qp$ be the non-residue function in Definition \ref{def:h}. Then the following statements hold.
	\begin{enumerate}
		\item If $p\equiv 1\mod 4$, there are exactly $49$ infinite families of local normal forms with two degrees of freedom for a critical point of a $p$-adic analytic function on a $4$-dimensional $p$-adic symplectic manifold $f:(M,\omega)\to\Qp$ up to local symplectomorphisms centered at the critical point, exactly $35$ infinite families with one degree of freedom, and exactly $20$ isolated normal forms.
		\item If $p\equiv 3\mod 4$, there are exactly $32$ infinite families of local normal forms with two degrees of freedom for a critical point of a $p$-adic analytic function on a $4$-dimensional $p$-adic symplectic manifold $f:(M,\omega)\to\Qp$ up to local symplectomorphisms centered at the critical point, exactly $27$ infinite families with one degree of freedom, and exactly $20$ isolated normal forms.
		\item If $p=2$, there are exactly $211$ infinite families of local normal forms with two degrees of freedom for a critical point of a $p$-adic analytic function on a $4$-dimensional $p$-adic symplectic manifold $f:(M,\omega)\to\Qp$ up to local symplectomorphisms centered at the critical point, exactly $103$ infinite families with one degree of freedom, and exactly $72$ isolated normal forms.
	\end{enumerate}
	
	In the three cases above, the infinite families with two degrees of freedom are given as
	\[\Big\{\Big\{r(x^2+c_1\xi^2)+s(y^2+c_2\eta^2):r,s\in\Qp\Big\}:c_1,c_2\in X_p\Big\}\]
	\[\cup\Big\{\Big\{r(x\eta+cy\xi)+s(x\xi+y\eta):r,s\in\Qp\Big\}:c\in Y_p\Big\}\]
	\[\cup\Big\{\Big\{r\left(\sum_{i=0}^{2}\mathcal{C}_i^1(c,t_1,t_2,a,b)x^iy^{2-i}+\sum_{i=0}^{2}\mathcal{D}_i^1(c,t_1,t_2,a,b)\xi^i\eta^{2-i}\right)\]\[+s\left(\sum_{i=0}^{2}\mathcal{C}_i^2(c,t_1,t_2,a,b)x^iy^{2-i}+\sum_{i=0}^{2}\mathcal{D}_i^2(c,t_1,t_2,a,b)\xi^i\eta^{2-i}\right):\]\[r,s\in\Qp\Big\}:(a,b)\in\Big\{(1,0),(a_1,b_1)\Big\},c,t_1,t_2,a_1,b_1\text{ in one row of Table \ref{table:canonical}}\Big\},\]
	those with one degree of freedom are
	\[\Big\{\Big\{r(x^2+c_1\xi^2)+sy^2/2:r\in\Qp\Big\}:c_1\in X_p,s\in Y_p\cup\{1\}\Big\}
	\cup\{\{r(x\xi+y\eta)+y\xi:r\in\Qp\}\}\]
	\[\cup\Big\{\Big\{r(x\eta+cy\xi)+a(x^2+y^2)/2:r\in\Qp\Big\}:c\in Y_p,a\in\{1,h_p(c)\}\Big\},\]
	and the isolated forms are
	\[\Big\{(rx^2+sy^2)/2:r,s\in Y_p\cup\{1\}\Big\}
	\cup\Big\{c(x^2/2+\xi\eta):c\in Y_p\cup\{1\}\Big\}.\]
	Here by ``infinite family'' we mean a family of normal forms of the form $r_1f_1+r_2f_2+\ldots+r_kf_k$, where $r_i$ are parameters and $k$ is the number of degrees of freedom, and by ``isolated'' we mean a form that is not part of any family.
\end{corollary}

\section{Normal forms of singularities of integrable systems}\label{sec:integrable}

The real Weierstrass-Williamson classification (Appendix \ref{sec:real}) is one of the foundational results used in the symplectic theory of integrable systems (in particular in Eliasson's linearization theorems \cite{Eliasson-thesis,Eliasson}). A consequence of the Weierstrass-Williamson classification states that, given an integrable system $F=(f_1,\ldots,f_n):(M,\omega)\to\R^n$ and a \textit{non-degenerate} critical point $m$ of $F$ (in a precise sense which we will define shortly), it is always possible to choose local symplectic coordinates $(x_1,\xi_1,\ldots,x_n,\xi_n)$ with the origin at $m$ such that in these coordinates
\[B\circ(F-F(m))=(g_1,\ldots,g_n)+\ocal(3),\]
where $B$ is a $n$-by-$n$ matrix of reals and each $g_i,i\in\{1,\ldots,n\}$ has one of the following forms: $\xi_i$ (\emph{regular component}), $(x_i^2+\xi_i^2)/2$ (\emph{elliptic component}), $x_i\xi_i$ (\emph{hyperbolic component}), or $x_i\xi_{i+1}-x_{i+1}\xi_i$ with the next function equal to $x_i\xi_i+x_{i+1}\xi_{i+1}$ (\emph{focus-focus component}). See Figure \ref{fig:normal-forms} for a representation.

\subsection{Non-degenerate critical points of integrable systems}

As we see next, the classification theorems for critical points of functions on symplectic manifolds can be applied to classify critical points of integrable systems. In order to do this, first we recall the notion of non-degeneracy for a critical point of an integrable system $F:(M,\omega)\to(\Qp)^n$ on a $p$-adic analytic symplectic manifold which we use in the paper, and which in the real case is equivalent to the usual definition in Vey's paper \cite{Vey}, see for example \cite[Section 4.2.1]{PelVuN-integrable} and \cite[Lemma 2.5]{DulPel}.

\begin{definition}\label{def:critical}
	Let $n$ be a positive integer. \letpprime. Let $(M,\omega)$ be a $p$-adic analytic symplectic manifold of dimension $2n$. Let $F=(f_1,\ldots,f_n):(M,\omega)\to(\Qp)^n$ be a $p$-adic analytic integrable system. A point $m\in M$ is a \emph{critical point} of $m$ if the $1$-forms $\dd f_1(m),\ldots,\dd f_n(m)$ are linearly dependent. The number of linearly independent forms among $\dd f_1(m),\ldots,\dd f_n(m)$ is called the \emph{rank} of the critical point.
\end{definition}

In the following definition, a subspace $U$ of a symplectic vector space $(V,\omega)$ is said to be \emph{isotropic} if $\omega(u,v)=0$ for any $u,v\in U$.

\begin{definition}[Non-degenerate critical point of $p$-adic analytic integrable system] \label{def:nondeg-integrable}
	Let $n$ be a positive integer. \letpprime. Let $(M,\omega)$ be a $p$-adic analytic symplectic manifold. Let $\Omega$ be the matrix of the linear symplectic form $\omega_m$ on the vector space $\mathrm{T}_mM$. A rank $0$ critical point $m$ of a $p$-adic analytic integrable system $F=(f_1,\ldots,f_n):(M,\omega)\to(\Qp)^n$ is \emph{non-degenerate} if the Hessians evaluated at $m$: \[\dd^2 f_1(m),\ldots,\dd^2 f_n(m)\] are linearly independent and if there exist $a_1,\ldots,a_n\in\Qp$ such that the matrix $$\Omega^{-1}\sum_{i=1}^{n}a_i\dd^2 f_i(m)$$ has $n$ different eigenvalues.
	
	If $m$ has rank $r$, then the vectors $X_{f_1}(m),\ldots,X_{f_n}(m)$ obtained by evaluating the Hamiltonian vector fields $X_{f_1},\ldots,X_{f_n}$ of $f_1,\ldots,f_n$ at $m$, form an isotropic linear subspace $L$ of $\mathrm{T}_mM$, whose dimension is $r$; suppose that $X_{f_1}(m),\ldots,X_{f_r}(m)$ are linearly independent. Then $\dd f_{r+1},\ldots,\dd f_n$ descend to $L^{\omega}/L$ in such a way that the origin is a rank $0$ critical point of the integrable system induced by $F$ on $L^{\omega}/L$. We say that the point is \emph{non-degenerate} if the origin is a non-degenerate critical point of this induced integrable system.
\end{definition}

\begin{remark}
	Definition \ref{def:nondeg-integrable} in the $p$-adic case is motivated by the fact that in the real case the notion of being non-degenerate for a critical point on an integrable system can also be defined in this way. Indeed, the usual definition is given in terms of Cartan subalgebras as follows: if $(M,\omega)$ is a real symplectic manifold of dimension $2n$ and $F=(f_1,\ldots,f_n):M \to \mathbb{R}^n$ is an integrable system, a critical point $m$ of $F$ of rank $0$ is \emph{non-degenerate} if the Hessians $\dd^2 f_1(m),\ldots,\dd^2 f_n(m)$ span a Cartan subalgebra of the symplectic Lie algebra of quadratic forms on the tangent space $(\mathrm{T}_mM,\omega_m)$.
\end{remark}

A $p$-adic analytic integrable system $(f_1,\ldots,f_n):(M,\omega)\to(\Qp)^n$ is \emph{non-degenerate} if all of its critical points are non-degenerate.

\subsection{Proof of Theorems \ref{thm:integrable}, \ref{thm:num-integrable} and \ref{thm:num-integrable2}}
By Theorem \ref{thm:darboux}, we may assume without loss of generality that $M$ is a neighborhood of the origin in $(\Qp)^4$ and $\omega=\omega_0$.

For a rank $1$ critical point, we can take $\dd\eta$ in the direction of the nonzero differential, and the problem reduces to classify the critical point in $L^{\omega_0}/L$. This is a system with one function (the linear combination of $f_1$ and $f_2$ with differential $0$) in dimension $2$, so we apply Corollary \ref{cor:williamson} and get the result.

For a rank $0$ critical point, first we prove existence. The fact that $f_1$ and $f_2$ form an integrable system implies that $\{f_1,f_2\}=0$, that is, \[(\dd f_1)^T\Omega_0^{-1}\dd f_2=0.\] Differentiating this twice, evaluating at $m$ and using that $\dd f_1(m)=\dd f_2(m)=0$,
\[\dd^2 f_1(m)\Omega_0^{-1}\dd^2 f_2(m)=\dd^2 f_2(m)\Omega_0^{-1}\dd^2 f_1(m)\]
We define $A_i=\Omega_0^{-1}\dd^2 f_i(m)$. The previous expression implies that $A_1$ and $A_2$ commute.

Let $u\in(\Cp)^4$ be an eigenvector of $A_1$, where $\Cp$ is the field of complex $p$-adic numbers. Then
\[A_1A_2u=A_2A_1u=\lambda A_2 u\]
for $\lambda\in\Cp$. This implies that $A_2u$ is also an eigenvector of $A_1$ with value $\lambda$. But the critical point is non-degenerate, which means that the only eigenvector with value $\lambda$ is $u$. Hence, $A_2u=\mu u$ for some $\mu\in\Cp$, and $u$ is also eigenvector of $A_2$. So $A_1$ and $A_2$ have the same eigenvectors.

We apply Corollary \ref{cor:williamson4} to $f_1$ and $f_2$ in order to find local symplectic coordinates which reduce them to a normal form. By Theorem \ref{thm:williamson4}, the matrix $S$ and the family of normal forms of each Hessian, hence the normal forms of $f_1$ and $f_2$, are determined by the eigenvectors of $A_1$ and $A_2$, which coincide. This means that the local symplectic coordinates and the normal forms are the same for $f_1$ than for $f_2$. Let these forms be $r_1g_1+s_1g_2$ and $r_2g_1+s_2g_2$, for some $r_1,r_2,s_1,s_2\in\Qp$ and $g_1$ and $g_2$ are among the possibilities of Corollary \ref{cor:williamson4}. Since the Hessians are linearly independent, the matrix
\[\begin{pmatrix}
	r_1 & s_1 \\
	r_2 & s_2
\end{pmatrix}\]
changing $(g_1,g_2)$ to $(f_1,f_2)$ can be inverted, giving the matrix $B$ that we need, and the proof of existence of Theorem \ref{thm:integrable} is complete. Uniqueness follows directly from Corollary \ref{cor:williamson4}.

Theorems \ref{thm:num-integrable} and \ref{thm:num-integrable2} follow from applying Theorems \ref{thm:num-forms1} and \ref{thm:num-forms}, respectively, to the Hessians of the components of the system: each normal form of the matrix gives a normal form of the integrable system, and the Hessian of $f_{P,p}$ is exactly the matrix $M(P,p)$.

\subsection{Degenerate critical points of integrable systems}\label{sec:deg-real}

Based on Theorem \ref{thm:williamson4-deg}, a version of Theorem \ref{thm:integrable} can also be deduced for \emph{degenerate} singularities (a topic of growing interest in real symplectic geometry; see for instance \cite{DulPel,EHS,HohPal} and the references therein).

In the real case, very little is known about degenerate singularities. In the literature only some results are available for some special kinds of degenerate points, see for instance \cite{Zung-2000,Zung-2002}. We give here the existence part of the classification for real integrable systems (which must be smooth but need not be analytic). We do not know how to obtain the uniqueness part, that is, we do not know which models are equivalent to each other.

\begin{theorem}\label{thm:real-degenerate}
	Let $(M,\omega)$ be a $4$-dimensional symplectic manifold. Let $F=(f_1,f_2):(M,\omega)\to \R^2$ be an integrable system and $m$ a degenerate critical point of $F$. Then there exist local symplectic coordinates $(x,\xi,y,\eta)$ with the origin at $m$ and an invertible matrix $B\in\M_2(\R)$ such that in these coordinates we have:
	\[
	B\circ(F-F(m))=(g_1,g_2)+\ocal(3),
	\]
	where the expression of $(g_1,g_2)$ depends on the rank of $m\in\{0,1\}$. If the rank is $1$, $g_1(x,\xi,y,\eta)\in\{0,x^2\}$ and $g_2(x,\xi,y,\eta)=\eta$. If the rank is $0$, one of the following expressions holds:
	\begin{enumerate}
		\item[(N0)] $g_1(x,\xi,y,\eta)=0,g_2(x,\xi,y,\eta)=0$.
		\item[(N1)] $g_1(x,\xi,y,\eta)=x^2/2,g_2(x,\xi,y,\eta)=0$.
		\item[(N2$(c)$)] $g_1(x,\xi,y,\eta)=(x^2+cy^2)/2,g_2(x,\xi,y,\eta)=0$, for $c\in\{-1,1\}$.
		\item[(N3)] $g_1(x,\xi,y,\eta)=y\xi,g_2(x,\xi,y,\eta)=0$.
		\item[(N4)] $g_1(x,\xi,y,\eta)=x^2/2+y\xi,g_2(x,\xi,y,\eta)=0$.
		\item[(N5)] $g_1(x,\xi,y,\eta)=x^2/2,g_2(x,\xi,y,\eta)=\eta^2/2$.
		\item[(N6$(a)$)] $g_1(x,\xi,y,\eta)=x^2/2,g_2(x,\xi,y,\eta)=xy+a\eta^2/2$, for $a\in\{-1,0,1\}$.
		\item[(N7$(a)$)] $g_1(x,\xi,y,\eta)=(x^2-y^2)/2,g_2(x,\xi,y,\eta)=xy+ay^2/2$, for $a\in\{0,2\}$.
		\item[(S1$(c)$)] $g_1(x,\xi,y,\eta)=(x^2+c\xi^2)/2,g_2(x,\xi,y,\eta)=0$, for $c\in\{-1,1\}$.
		\item[(S2$(c,a)$)] $g_1(x,\xi,y,\eta)=(x^2+c\xi^2)/2+ay^2,g_2(x,\xi,y,\eta)=0$, for $c\in\{-1,1\},a\in\{-1,1\}$.
		\item[(S3$(c)$)] $g_1(x,\xi,y,\eta)=(x^2+c\xi^2)/2,g_2(x,\xi,y,\eta)=y^2$, for $c\in\{-1,1\}$.
		\item[(R1)] $g_1(x,\xi,y,\eta)=x\xi+y\eta,g_2(x,\xi,y,\eta)=0$.
		\item[(R2)] $g_1(x,\xi,y,\eta)=x\xi+y\eta+y\xi,g_2(x,\xi,y,\eta)=0$.
		\item[(R3)] $g_1(x,\xi,y,\eta)=x\xi+y\eta,g_2(x,\xi,y,\eta)=y\xi$.
		\item[(I1)] $g_1(x,\xi,y,\eta)=(x^2+\xi^2+y^2+\eta^2)/2,g_2(x,\xi,y,\eta)=0$.
		\item[(I2)] $g_1(x,\xi,y,\eta)=x\eta-y\xi,g_2(x,\xi,y,\eta)=0$.
		\item[(I3)] $g_1(x,\xi,y,\eta)=x\eta-y\xi,g_2(x,\xi,y,\eta)=(x^2+y^2)/2$.
	\end{enumerate}
\end{theorem}

For the $p$-adic case, we need the following definition.

\begin{definition}\label{def:degenerate-set}
	\letpprime. Let $d_0\in\Z$ be such that $d_0^2+1$ is the smallest quadratic non-residue. Let $Y_p$ be the non-residue set in Definition \ref{def:sets}. We define the \emph{degenerate residue set} $Z_{p,c}$ as follows.
	\[\begin{cases}
		Z_{p,1}=\{2d_0,2\ii,2\ii+p,2\ii-p\},Z_{p,c_0}=Z_{p,p}=Z_{p,c_0p}=\{0\} & \text{if }p\equiv 1\mod 4; \\
		Z_{p,1}=\{2d_0\},Z_{p,-1}=\{0,2,2+p,2-p\},Z_{p,p}=Z_{p,-p}=\{0\} & \text{if }p\equiv 3\mod 4; \\
		Z_{p,1}=\{1,2,6\},Z_{p,-1}=\{0,1,2,4,6,10,22,26\},Z_{p,3}=\{0,1,4\}, & \\
		Z_{p,-3}=\{0,2,6\},Z_{p,2}=Z_{p,-2}=Z_{p,6}=Z_{p,-6}=\{0,2,4\} & \text{if }p=2.
	\end{cases}\]
\end{definition}

The choice of the smallest quadratic non-residue in Definition \ref{def:degenerate-set} is not essential, in analogy with Definition \ref{def:sets}.

\begin{theorem}\label{thm:padic-degenerate}
	\letpprime. Let $Y_p$ be the non-residue set in Definition \ref{def:sets} and for any $c\in Y_p\cup\{1\}$ let $Z_{p,c}$ be the degenerate residue set in Definition \ref{def:degenerate-set}. Let $(M,\omega)$ be a $4$-dimensional $p$-adic analytic symplectic manifold. Let $F=(f_1,f_2):(M,\omega)\to \Qp^2$ be a $p$-adic analytic integrable system and $m$ a degenerate critical point of $F$. Then there exist local symplectic coordinates $(x,\xi,y,\eta)$ with the origin at $m$ and an invertible matrix $B\in\M_2(\Qp)$ such that in these coordinates we have:
	\[
	B\circ(F-F(m))=(g_1,g_2)+\ocal(3),
	\]
	where the expression of $(g_1,g_2)$ depends on the rank of $m\in\{0,1\}$. If the rank is $1$, $g_1(x,\xi,y,\eta)\in\{0,x^2\}$ and $g_2(x,\xi,y,\eta)=\eta$. If the rank is $0$, one of the following expressions holds:
	\begin{enumerate}
		\item[(N0)] $g_1(x,\xi,y,\eta)=0,g_2(x,\xi,y,\eta)=0$.
		\item[(N1)] $g_1(x,\xi,y,\eta)=x^2/2,g_2(x,\xi,y,\eta)=0$.
		\item[(N2$(c)$)] $g_1(x,\xi,y,\eta)=(x^2+cy^2)/2,g_2(x,\xi,y,\eta)=0$, for $c\in Y_p\cup\{1\}$.
		\item[(N3)] $g_1(x,\xi,y,\eta)=y\xi,g_2(x,\xi,y,\eta)=0$.
		\item[(N4)] $g_1(x,\xi,y,\eta)=x^2/2+y\xi,g_2(x,\xi,y,\eta)=0$.
		\item[(N5)] $g_1(x,\xi,y,\eta)=x^2/2,g_2(x,\xi,y,\eta)=\eta^2/2$.
		\item[(N6$(a)$)] $g_1(x,\xi,y,\eta)=x^2/2,g_2(x,\xi,y,\eta)=xy+a\eta^2/2$, for $a\in Y_p\cup\{0,1\}$.
		\item[(N7$(c,a)$)] $g_1(x,\xi,y,\eta)=(x^2+cy^2)/2,g_2(x,\xi,y,\eta)=xy+ay^2/2$, for $c\in Y_p\cup\{1\}, a\in Z_{p,c}$.
		\item[(S1$(c)$)] $g_1(x,\xi,y,\eta)=(x^2+c\xi^2)/2,g_2(x,\xi,y,\eta)=0$, for $c\in X_p$.
		\item[(S2$(c,a)$)] $g_1(x,\xi,y,\eta)=(x^2+c\xi^2)/2+ay^2,g_2(x,\xi,y,\eta)=0$, for $c\in X_p,a\in\{-1,1\}$.
		\item[(S3$(c)$)] $g_1(x,\xi,y,\eta)=(x^2+c\xi^2)/2,g_2(x,\xi,y,\eta)=y^2$, for $c\in X_p$.
		\item[(R1)] $g_1(x,\xi,y,\eta)=x\xi+y\eta,g_2(x,\xi,y,\eta)=0$.
		\item[(R2)] $g_1(x,\xi,y,\eta)=x\xi+y\eta+y\xi,g_2(x,\xi,y,\eta)=0$.
		\item[(R3)] $g_1(x,\xi,y,\eta)=x\xi+y\eta,g_2(x,\xi,y,\eta)=y\xi$.
		\item[(I1$(c)$)] $g_1(x,\xi,y,\eta)=(x^2-c\xi^2+y^2-c\eta^2)/2,g_2(x,\xi,y,\eta)=0$, for $c\in Y_p$.
		\item[(I2$(c)$)] $g_1(x,\xi,y,\eta)=x\eta+cy\xi+(1+c)(x^2+cy^2)/4c,g_2(x,\xi,y,\eta)=0$, for $c\in Y_p$.
		\item[(I3$(c)$)] $g_1(x,\xi,y,\eta)=x\eta+cy\xi+(1+c)y^2/2,g_2(x,\xi,y,\eta)=(x^2-cy^2)/2$, for $c\in Y_p$.
	\end{enumerate}
\end{theorem}

\begin{proof}[Proof of Theorems \ref{thm:real-degenerate} and \ref{thm:padic-degenerate}]
	Let $K$ be either $\R$ or $\Qp$. By Theorem \ref{thm:darboux}, the same as we did in the proof of Corollary \ref{cor:williamson}, we may assume that the manifold is $(K^4,\omega_0)$. Let $F=(f_1,f_2):(K^4,\omega_0)\to K^2$ be an integrable system with a degenerate critical point at the origin.
	
	The case of rank $1$ points is easy to solve: one component of the normal form must be regular, so we take $f_2=\eta$. The other component $f_1$ can be put in terms of $(x,\xi)$ and in these variables it has a degenerate critical point at the origin, which we know how to classify by Corollary \ref{cor:williamson}: degenerate points have the form $rx^2$ for some $r\in\{-1,0,1\}$ in the real case and $r\in Y_p\cup\{0,1\}$ in the $p$-adic case. This gives two forms up to multiplication by constants: $(0,\eta)$ and $(x^2,\eta)$.
	
	For the rank $0$ points, we start by making a classification of the matrices in three types.
	
	\begin{definition}
		\letpprime. Let $K$ be $\R$ or $\Qp$.A matrix $M\in\M_4(K)$ is \emph{nilpotent} if the four eigenvalues of $M$ are zero, \emph{semi-nilpotent} if only two of them are zero, and \emph{non-nilpotent} if no eigenvalue is zero.
		
		A $2$-dimensional vector subspace $V\subset\M_4(K)$ is \emph{nilpotent} if all matrices in $V$ are nilpotent, \emph{semi-nilpotent} if they are at least semi-nilpotent, and \emph{non-nilpotent} otherwise.
	\end{definition}
	
	Let $M_i$ be the Hessian of $f_i$, for $i\in\{1,2\}$, and let $A_i=\Omega_0^{-1}M_i$. We will reduce the matrices to their normal forms in several steps. It is important that we are allowed to multiply the functions $(f_1,f_2)$ by a matrix $B$, that is, they can be replaced by any linear combination. In terms of the matrices $A_i$, this means that their exact forms are not as relevant for the classification as the linear space that they generate. Let \[\Span(A_1,A_2)\subset\M_4(K)\] be this space. We distinguish three cases, depending on the type of $\Span(A_1,A_2)$.
	
	\begin{enumerate}
		\item \emph{Nilpotent case.}
		
		In this case both $A_1$ and $A_2$ are nilpotent, as well as all their linear combinations. If $A_1=A_2=0$, it is case (N0); otherwise, we suppose that $A_1\ne 0$.
		
		By Theorem \ref{thm:williamson4-deg}, $A_1$ can be brought to one of the forms
		\[\begin{pmatrix}
			0 & 0 & 0 & 0 \\
			1 & 0 & 0 & 0 \\
			0 & 0 & 0 & 0 \\
			0 & 0 & 0 & 0
		\end{pmatrix},
		\begin{pmatrix}
			0 & 0 & 0 & 0 \\
			1 & 0 & 0 & 0 \\
			0 & 0 & 0 & 0 \\
			0 & 0 & r & 0
		\end{pmatrix},
		\begin{pmatrix}
			0 & 0 & 1 & 0 \\
			0 & 0 & 0 & 0 \\
			0 & 0 & 0 & 0 \\
			0 & -1 & 0 & 0
		\end{pmatrix},
		\begin{pmatrix}
			0 & 0 & 0 & 1 \\
			1 & 0 & 0 & 0 \\
			0 & 1 & 0 & 0 \\
			0 & 0 & 0 & 0
		\end{pmatrix}\]
		by a symplectic matrix in $\M_4(K)$, where $r\in Y_p\cup\{1\}$ in the $p$-adic case and $r\in\{1,-1\}$ in the real case. We tackle each form separately. In what follows, we use $a_{ij}$ for the element in the $i$-th row and $j$-th column of $A_2$.
		
		\begin{enumerate}
			\item \emph{First form.}
			
			Applying that $A_1A_2=A_2A_1$ and that $M_2=\Omega_0A_2$ is symmetric, we have that
			\[A_2=\begin{pmatrix}
				0 & 0 & 0 & 0 \\
				a_{21} & 0 & a_{23} & a_{24} \\
				-a_{24} & 0 & a_{33} & a_{34} \\
				a_{23} & 0 & a_{43} & -a_{33}
			\end{pmatrix}.\]
			Since $A_2$ is nilpotent, the lower right block must have the two eigenvalues zero, that is, \[a_{33}^2+a_{34}a_{43}=0.\] This means that, if $a_{33}\ne 0$, after multiplying by the symplectic matrix
			\[S_1=\begin{pmatrix}
				1 & 0 & 0 & 0 \\
				0 & 1 & 0 & 0 \\
				0 & 0 & 1 & 0 \\
				0 & 0 & \frac{a_{43}}{a_{33}} & 1
			\end{pmatrix},\]
			we have $S_1^{-1}A_1S_1=A_1$ and
			\[B_2=S_1^{-1}A_2S_1=\begin{pmatrix}
				0 & 0 & 0 & 0 \\
				b_{21} & 0 & b_{23} & b_{24} \\
				-b_{24} & 0 & 0 & b_{34} \\
				b_{23} & 0 & 0 & 0
			\end{pmatrix}.\]
			If $a_{33}=0$, we can also achieve this form by exchanging, if needed, the third and fourth coordinates and changing the sign of one of them to preserve $\Omega_0$. Note that $b_{21}$ does not affect the normal form, because it can be eliminated adding a multiple of $A_1$.
			
			If $b_{23}=b_{24}=b_{34}=0$, this is form (N1). If $b_{23}=b_{24}=0$ and $b_{34}\ne 0$, it is (N5). If $b_{23}=0$ and $b_{24}\ne 0$, after exchanging the coordinates we get
			\[C_2=\begin{pmatrix}
				0 & 0 & 0 & 0 \\
				c_{21} & 0 & c_{23} & 0 \\
				0 & 0 & 0 & 0 \\
				c_{23} & 0 & c_{43} & 0
			\end{pmatrix}.\]
			If $c_{43}=0$, we are in form (N6(0)). If $c_{43}\ne 0$, we multiply by a constant to make $c_{43}=1$ and then by a symplectic matrix
			\[\begin{pmatrix}
				1 & 0 & 0 & 0 \\
				0 & 1 & 0 & c_{23} \\
				-c_{23} & 0 & 1 & 0 \\
				0 & 0 & 0 & 1
			\end{pmatrix}\]
			to get to form (N5).
			
			If $b_{23}\ne 0$, we multiply $B_2$ by a constant to make $b_{23}=1$ and then use
			\[S_2=\begin{pmatrix}
				1 & 0 & 0 & 0 \\
				0 & 1 & 0 & 0 \\
				0 & 0 & 1 & -b_{24} \\
				0 & 0 & 0 & 1
			\end{pmatrix}\]
			to make $S_2^{-1}A_1S_2=A_1$ and
			\[D_2=S_2^{-1}B_2S_2=\begin{pmatrix}
				0 & 0 & 0 & 0 \\
				d_{21} & 0 & 1 & 0 \\
				0 & 0 & 0 & d_{34} \\
				1 & 0 & 0 & 0
			\end{pmatrix}.\]
			
			We can now make $d_{34}\in\{-1,0,1\}$ if $K=\R$ and $d_{34}\in Y_p\cup\{0,1\}$ if $K=\Qp$ by multiplying by a symplectic matrix of the form
			\[\begin{pmatrix}
				1 & 0 & 0 & 0 \\
				0 & 1 & 0 & 0 \\
				0 & 0 & k & 0 \\
				0 & 0 & 0 & \frac{1}{k}
			\end{pmatrix},\]
			and we are in form (N6$(d_{34})$).
			
			\item \emph{Second form.}
			
			Applying again that $A_1A_2=A_2A_1$ and that $M_2=\Omega_0A_2$ is symmetric,
			\[A_2=\begin{pmatrix}
				0 & 0 & ra_{24} & 0 \\
				a_{21} & 0 & a_{23} & a_{24} \\
				-a_{24} & 0 & 0 & 0 \\
				a_{23} & -ra_{24} & a_{43} & 0
			\end{pmatrix}.\]
			That $A_2$ is nilpotent is equivalent to saying that $a_{24}=0$, so the matrix has the form
			\[A_2=\begin{pmatrix}
				0 & 0 & 0 & 0 \\
				a & 0 & b & 0 \\
				0 & 0 & 0 & 0 \\
				b & 0 & c & 0
			\end{pmatrix}\]
			for $a,b,c\in K$.
			
			If $b=0$ and $c=ra$, after subtracting a multiple of $A_1$ we end up in case (N2$(r)$). Suppose now that $b\ne 0$ or $c\ne ra$.
			
			Let \[t=(c-ra)^2+4rb^2.\] If $t=0$, that can only happen if $r=-s^2$ for some $s\in K$ and $c-ra=2bs$. In the real case, this means that $r=-1$ and $c-ra=2b$; after subtracting a multiple of $A_1$ and dividing by a constant, we end up in case (N7(2)). In the $p$-adic case, we can see that $2s\in Z_{p,r}$: if $p\equiv 1\mod 4$, $r=1$ and $s=\ii$; otherwise, $r=-1$ and $s=1$. After subtracting a multiple of $A_1$ and dividing by a constant, we make $a=0$, $b=1$ and $c=2s$, arriving at case (N7$(r,2s)$).
			
			Now suppose that $t\ne 0$. If $t$ is a square in $K$, we can reduce to case (N5): to do that, we define
			\[d=\frac{ra-c+\sqrt{t}}{2rb}\]
			and
			\[S_1=\begin{pmatrix}
				1 & 0 & rd & 0 \\
				0 & 1 & 0 & d \\
				-d & 0 & 1 & 0 \\
				0 & -rd & 0 & 1
			\end{pmatrix}.\]
			We have that $S_1^T\Omega_0S_1=\Omega_0$, $S_1^{-1}A_1S_1=A_1$ and $S_1^{-1}A_2S_1$ has the same form as $A_2$ but with $b=0$. After recombining $A_1$ and $A_2$, we finally get the normal form.
			
			If $t$ is not a square, in the $p$-adic case, $t$ must be in some class of \[\bDSq(K,r),\] and there is a $k\in Z_{p,r}$ such that $k^2+4r$ is in the same class and $t(k^2+4r)$ is a square in $K$. In the real case, $t$ not being a square implies that $t<0$ and $r=-1$, so the same holds for $k=0$. Using now
			\[d=\frac{k(c-ra)-4rb+\sqrt{t(k^2+4r)}}{2r(ra-c-kb)}\]
			and the same matrix $S_1$ as before, we have again that $S_1^T\Omega_0S_1=\Omega_0$, $S_1^{-1}A_1S_1=A_1$ and
			\[S_1^{-1}A_2S_1=\begin{pmatrix}
				0 & 0 & 0 & 0 \\
				a' & 0 & b' & 0 \\
				0 & 0 & 0 & 0 \\
				b' & 0 & ra'+kb' & 0
			\end{pmatrix},\]
			that can be changed to the form (N7$(k)$) or (N7$(r,k)$).
			
			\item \emph{Third form.}
			
			In this case the conditions that $A_1A_2=A_2A_1$ and $\Omega_0A_2$ is symmetric imply that
			\[A_2=\begin{pmatrix}
				a_{11} & a_{12} & a_{13} & 0 \\
				0 & -a_{11} & 0 & 0 \\
				0 & 0 & a_{11} & 0 \\
				0 & -a_{13} & a_{43} & -a_{11}
			\end{pmatrix}.\]
			Adding that $A_2$ must be nilpotent, we must have $a_{11}=0$. Adding a multiple of $A_1$ we make $a_{13}=0$. The matrix has at this point the form
			\[A_2=\begin{pmatrix}
				0 & a_{12} & 0 & 0 \\
				0 & 0 & 0 & 0 \\
				0 & 0 & 0 & 0 \\
				0 & 0 & a_{43} & 0
			\end{pmatrix}.\]
			If $a_{12}\ne 0$, multiplying by a constant we make $a_{12}=1$; if $a_{12}=0$ and $a_{43}\ne 0$, we have the same by changing the order of the coordinates; if $a_{12}=a_{43}=0$, we are in case (N3).
			
			Once we have made $a_{12}=1$, multiplying by a symplectic matrix of the form
			\[\begin{pmatrix}
				1 & 0 & 0 & 0 \\
				0 & 1 & 0 & 0 \\
				0 & 0 & k & 0 \\
				0 & 0 & 0 & \frac{1}{k}
			\end{pmatrix},\]
			we can make $a_{43}\in\{-1,0,1\}$ in the real case and $a_{43}\in Y_p\cup\{0,1\}$ in the $p$-adic case. The same symplectic matrix changes $A_1$ to $kA_1$. After rearranging the first and second coordinates, we get to the matrices
			\[B_1=\begin{pmatrix}
				0 & 0 & 0 & 0 \\
				0 & 0 & 1 & 0 \\
				0 & 0 & 0 & 0 \\
				1 & 0 & 0 & 0
			\end{pmatrix},
			B_2=\begin{pmatrix}
				0 & 0 & 0 & 0 \\
				1 & 0 & 0 & 0 \\
				0 & 0 & 0 & 0 \\
				0 & 0 & r & 0
			\end{pmatrix}.\]
			Taking this $B_2$ as a new $A_1$, we are in the first case if $r=0$ and the second otherwise, and we continue from there.
			
			\item \emph{Fourth form.}
			
			In this case, that $A_1A_2=A_2A_1$ and $\Omega_0A_2$ is symmetric implies that
			\[A_2=\begin{pmatrix}
				0 & 0 & 0 & a_{14} \\
				a_{14} & 0 & 0 & 0 \\
				0 & a_{14} & 0 & a_{34} \\
				0 & 0 & 0 & 0
			\end{pmatrix}.\]
			We make $a_{14}=0$ by adding a multiple of $A_1$. If $a_{34}=0$, we are in case N4; otherwise, after rearranging coordinates, we get to case (N6(1)).
		\end{enumerate}
		
		\item \emph{Semi-nilpotent case.}
		
		The semi-nilpotent degenerate matrices have eigenvalues of the form $\{\lambda,-\lambda,0,0\}$. $A_1$ can be changed to one of the forms
		\[\begin{pmatrix}
			\lambda & 0 & 0 & 0 \\
			0 & -\lambda & 0 & 0 \\
			0 & 0 & 0 & 0 \\
			0 & 0 & a & 0
		\end{pmatrix},\]
		where $a\in\{-1,0,1\}$ if $K=\R$ and $a\in Y_p\cup\{0,1\}$ if $K=\Qp$, by a symplectic matrix. Let $B_1$ be this matrix, and let $S_1$ be the matrix such that $S_1^{-1}A_1S_1=B_1$ and $S_1^T\Omega_0S_1=\Omega_0$. Let also $B_2=S_1^{-1}A_2S_1$.
		
		Using that $B_1B_2=B_2B_1$ and $\Omega_0B_2$ is symmetric, we have that
		\[B_2=\begin{pmatrix}
			\mu_1 & 0 & 0 & 0 \\
			0 & -\mu_1 & 0 & 0 \\
			0 & 0 & \mu_2 & \mu_3 \\
			0 & 0 & \mu_4 & -\mu_2
		\end{pmatrix}\]
		respectively. If $\lambda\notin K$, $K[\lambda]$ is a degree two extension of $K$. The first two columns of $S_1$ are of the form $u$ and $a\bar{u}$ for some $a\in K[\lambda]$, where $\bar{u}$ denotes the conjugate in $K[\lambda]$. This in turn implies that $\mu_1$ is imaginary, that is, it is of the form $r\lambda$ for some $r\in K$. That is, whether or not $\lambda=1$, we can always add a multiple of $B_1$ to $B_2$ to get a new matrix $B_3$, which is like $B_2$ but with $\mu_1=0$.
		
		The last two columns of $S_1$ must always be in $K$, because they correspond to eigenvalues $0$. This implies that $\mu_2$, $\mu_3$ and $\mu_4$ are always in $K$.
		
		Applying Theorem \ref{thm:williamson} to the matrix $B_1$ and the dimension $2$ subspace spanned by the first two vectors in the symplectic basis formed by the columns of $S_1$, we can multiply the matrix $B_1$ by a symplectic matrix
		\[S_2=\begin{pmatrix}
			\phi_1 & \phi_2 & 0 & 0 \\
			\phi_3 & \phi_4 & 0 & 0 \\
			0 & 0 & 1 & 0 \\
			0 & 0 & 0 & 1
		\end{pmatrix}\]
		such that $S_1S_2$ has entries in $K$ and
		\[S_2^{-1}B_1S_2=C_1=
		\begin{pmatrix}
			0 & -cr & 0 & 0 \\
			r & 0 & 0 & 0 \\
			0 & 0 & 0 & 0 \\
			0 & 0 & a & 0
		\end{pmatrix},\]
		for $r\in K$ and $c\in X_p$ in the $p$-adic case, and $c\in\{-1,1\}$ in the real case. $B_3$, which only has the lower right block different from zero, is not altered. We can suppose $r=1$.
		
		If $a=0$, this lower right block must be nilpotent, because otherwise some linear combination $C_1+kB_3$ with $k$ small enough would not be degenerate. This means we can always transform it to the form
		\[\begin{pmatrix}
			0 & 0 \\
			\mu_4 & 0
		\end{pmatrix}.\]
		If $\mu_4=0$, this is form (S1$(c)$); otherwise we make $\mu_4=1$ multiplying by a constant and we are in form (S3$(c)$).
		
		If $a\ne 0$ and $B_3=0$, by commutativity we must have $\mu_2=\mu_3=0$, and we are in case (S2$(c,a)$). If $a\ne 0$ and $B_3\ne 0$, we have again $\mu_2=\mu_3=0$ and we can add a multiple of $B_3$ to $B_1$ to make $a=0$, and then make $\mu_4=1$ by multiplying by a constant, so we end in case (S3$(c)$).
		
		\item \emph{Non-nilpotent case.}
		
		If a degenerate matrix is non-nilpotent, its eigenvalues must be of the form \[\{\lambda,\lambda,-\lambda,-\lambda\}.\]
		
		By Lemma \ref{lemma:eig2}, $A_1$ can be brought to one of the matrices
		\[\begin{pmatrix}
			\lambda & 0 & 0 & 0 \\
			0 & -\lambda & 0 & 0 \\
			0 & 0 & \lambda & 0 \\
			0 & 0 & 0 & -\lambda
		\end{pmatrix}\text{ or }
		\begin{pmatrix}
			\lambda & 0 & 1 & 0 \\
			0 & -\lambda & 0 & 0 \\
			0 & 0 & \lambda & 0 \\
			0 & -1 & 0 & -\lambda
		\end{pmatrix}\]
		by a symplectic matrix. Let $B_1$ be the resulting matrix, and let $S_1$ be the matrix such that $S_1^{-1}A_1S_1=B_1$ and $S_1^T\Omega_0S_1=\Omega_0$. Let also $B_2=S_1^{-1}A_2S_1$. As $B_2$ commutes with $B_1$, it must have the form
		\[\begin{pmatrix}
			\mu_1 & 0 & \mu_2 & 0 \\
			0 & \mu_5 & 0 & \mu_6 \\
			\mu_3 & 0 & \mu_4 & 0 \\
			0 & \mu_7 & 0 & \mu_8
		\end{pmatrix}\]
		We also know that $S_1^TM_2S=\Omega_0 B_2$ is a symmetric matrix, so we have $\mu_1=-\mu_5$, $\mu_2=-\mu_7$, $\mu_3=-\mu_6$ and $\mu_4=-\mu_8$.
		
		There are two possible cases, whether $\lambda\in K$ or $\lambda\not\in K$, which we call ``real'' and ``imaginary'' by extension of what happens when $K=\R$. In both cases $\lambda^2\in K$.
		
		\begin{enumerate}
			\item \emph{Real case.}
			
			In this case $B_1$ and $S_1$, and hence also $B_2$, have entries in the base field $K$. This means that we can add a multiple of $B_1$ to $B_2$ to get a new basis $(B_1,B_3)$ of $\Span(B_1,B_2)$ where $B_3$ is like $B_2$ but with \[\mu_1=-\mu_4=-\mu_5=\mu_8.\]
			
			If $B_1$ is not diagonal, the relation $B_1B_3=B_3B_1$ implies that \[\mu_1=\mu_3=\mu_4=\mu_5=\mu_6=\mu_8=0,\] that is,
			
			\[B_3=\begin{pmatrix}
				0 & 0 & \mu_2 & 0 \\
				0 & 0 & 0 & 0 \\
				0 & 0 & 0 & 0 \\
				0 & -\mu_2 & 0 & 0
			\end{pmatrix}\]
			
			If $\mu_2=0$, we are in case (R2); otherwise we divide this by $\mu_2$, subtract the resulting matrix from $B_1$, and divide it by $\lambda$, resulting in case (R3).
			
			If $B_1$ is diagonal, we can divide $B_1$ by $\lambda$ to make $\lambda=1$. If $B_3=0$, we are in case (R1). Otherwise, we will prove that $B_3$ must be nilpotent.
			
			Suppose otherwise and let \[B_4=B_3+kB_1,\] where $k\in K$ is small (in the real or the $p$-adic sense). The characteristic polynomial of
			\[B_4=\begin{pmatrix}
				\mu_1+k & 0 & \mu_2 & 0 \\
				0 & -\mu_1-k & 0 & -\mu_3 \\
				\mu_3 & 0 & -\mu_1+k & 0 \\
				0 & -\mu_2 & 0 & \mu_1-k
			\end{pmatrix}\]
			is
			\[((t-k)^2-\mu_1^2-\mu_2\mu_3)((t+k)^2-\mu_1^2-\mu_2\mu_3).\]
			$B_3$ has the same characteristic polynomial but with $k=0$; as it is not nilpotent, $\mu_1^2+\mu_2\mu_3\ne 0$, and if $k$ is small enough, this polynomial has four different roots and the origin is non-degenerate, a contradiction.
			
			That $B_3$ is nilpotent and not zero implies that there are numbers $\phi_1,\phi_2,\phi_3,\phi_4\in K$ such that
			\begin{equation}\label{eq:diag-deg}
				\begin{pmatrix}
					\phi_1 & \phi_2 \\
					\phi_3 & \phi_4
				\end{pmatrix}^{-1}
				\begin{pmatrix}
					\mu_1 & \mu_2 \\
					\mu_3 & -\mu_1
				\end{pmatrix}
				\begin{pmatrix}
					\phi_1 & \phi_2 \\
					\phi_3 & \phi_4
				\end{pmatrix}
				=\begin{pmatrix}
					0 & 1 \\
					0 & 0
				\end{pmatrix}.
			\end{equation}
			
			Let
			\[\begin{pmatrix}
				\phi_5 & \phi_6 \\
				\phi_7 & \phi_8
			\end{pmatrix}
			=\begin{pmatrix}
				\phi_1 & \phi_3 \\
				\phi_2 & \phi_4
			\end{pmatrix}^{-1}.\]
			
			Equation \eqref{eq:diag-deg} implies that
			\begin{equation}\label{eq:diag-deg2}
				\begin{pmatrix}
					\phi_5 & \phi_6 \\
					\phi_7 & \phi_8
				\end{pmatrix}^{-1}
				\begin{pmatrix}
					-\mu_1 & -\mu_3 \\
					-\mu_2 & \mu_1
				\end{pmatrix}
				\begin{pmatrix}
					\phi_5 & \phi_6 \\
					\phi_7 & \phi_8
				\end{pmatrix}
				=\begin{pmatrix}
					0 & 0 \\
					-1 & 0
				\end{pmatrix}.
			\end{equation}
			
			Now we define
			\[S_2=\begin{pmatrix}
				\phi_1 & 0 & \phi_2 & 0 \\
				0 & \phi_5 & 0 & \phi_6 \\
				\phi_3 & 0 & \phi_4 & 0 \\
				0 & \phi_7 & 0 & \phi_8
			\end{pmatrix}.\]
			
			We have that $S_2$ is a symplectic matrix because
			\[S_2^T\Omega_0S_2=\begin{pmatrix}
				0 & \phi_1\phi_5+\phi_3\phi_7 & 0 & \phi_1\phi_6+\phi_3\phi_8 \\
				-\phi_1\phi_5-\phi_3\phi_7 & 0 & -\phi_2\phi_5-\phi_4\phi_7 & 0 \\
				0 & \phi_2\phi_5+\phi_4\phi_7 & 0 & \phi_2\phi_6+\phi_4\phi_8 \\
				-\phi_1\phi_6-\phi_3\phi_8 & 0 & -\phi_2\phi_6-\phi_4\phi_8 & 0
			\end{pmatrix}=\Omega_0.\]
			
			It is clear that $S_2^{-1}B_1S_2=B_1$. Finally, putting together \eqref{eq:diag-deg} and \eqref{eq:diag-deg2}, we get
			\[S_2^{-1}B_3S_2=\begin{pmatrix}
				0 & 0 & 1 & 0 \\
				0 & 0 & 0 & 0 \\
				0 & 0 & 0 & 0 \\
				0 & -1 & 0 & 0
			\end{pmatrix},\]
			and we are in case (R3).
			
			\item \emph{Imaginary case.}
			
			In this case $K[\lambda]$ is a degree two extension of $K$ and $\mu_i\in K[\lambda]$ for $1\le i \le 8$. We can multiply $\lambda$ by any number in $K$, that is, we can make $\lambda=\ii$ if $K=\R$, and $\lambda=\sqrt{c}$ for some $c\in Y_p$ if $K=\Qp$. As usual, we denote by $x\mapsto\bar{x}$ the conjugate in $K[\lambda]$. We have two cases, depending on whether $B_1$ is diagonal or not.
			
			\begin{enumerate}
				\item \emph{$B_1$ is diagonal.}
				
				In this case we have that the columns of $S_1$ have the form $(u,a\bar{u},v,b\bar{v})$, for $u,v\in K[\lambda]^4$ and $a,b\in K[\lambda]$. We also have that
				\[au^T\Omega_0\bar{u}=1=\bar{1}=\overline{au^T\Omega_0\bar{u}}=-\bar{a}u^T\Omega_0\bar{u},\]
				that is, $\bar{a}=-a$, and analogously $\bar{b}=-b$. Using that $A_2S_1=S_1 B_2$,
				\[A_2\begin{pmatrix}
					u & a\bar{u} & v & b\bar{v}
				\end{pmatrix}=
				\begin{pmatrix}
					\mu_1 u+\mu_3 v & \mu_5a\bar{u}+\mu_7b\bar{v} & \mu_2 u+\mu_4 v & \mu_6a\bar{u}+\mu_8b\bar{v}
				\end{pmatrix}\]
				which implies that \[a(\overline{\mu_1 u+\mu_3 v})=\mu_5a\bar{u}+\mu_7b\bar{v}\] and \[b(\overline{\mu_2 u+\mu_4 v})=\mu_6a\bar{u}+\mu_8b\bar{v},\] and as $u$ and $v$ are linearly independent, we must have
				\[a\bar{\mu}_1=a\mu_5=-a\mu_1,\]
				\[a\bar{\mu}_3=b\mu_7=-b\mu_2,\]
				\[b\bar{\mu}_2=a\mu_6=-a\mu_3,\]
				\[b\bar{\mu}_4=b\mu_8=-b\mu_4.\]
				This means that
				\[B_2=\begin{pmatrix}
					s_1\lambda & 0 & a(r_2+s_2\lambda) & 0 \\
					0 & -s_1\lambda & 0 & -b(r_2-s_2\lambda) \\
					b(r_2-s_2\lambda) & 0 & s_3\lambda & 0 \\
					0 & -a(r_2+s_2\lambda) & 0 & -s_3\lambda
				\end{pmatrix}\]
				for some $s_1,r_2,s_2,s_3\in K$. We can add a multiple of $B_1$ to this matrix so that it still has the same form but with $s_3=-s_1.$ Let $B_3$ be the resulting matrix, such that $\Span(B_1,B_2)=\Span(B_1,B_3)$.
				
				If $B_3=0$, we can now apply the symplectic matrix
				\[S_2=\begin{pmatrix}
					1 & \lambda & 0 & 0 \\
					-\frac{1}{2\lambda} & \frac{1}{2} & 0 & 0 \\
					0 & 0 & 1 & \lambda \\
					0 & 0 & -\frac{1}{2\lambda} & \frac{1}{2}
				\end{pmatrix}\]
				to obtain
				\[S_2^{-1}B_1S_2=\begin{pmatrix}
					0 & \lambda^2 & 0 & 0 \\
					1 & 0 & 0 & 0 \\
					0 & 0 & 0 & \lambda^2 \\
					0 & 0 & 1 & 0
				\end{pmatrix}\]
				and we are in case (I1$(\lambda^2)$).
				
				If $B_3\ne 0$, we can prove that $B_3$ is nilpotent by the same reason as in the real case. This means that
				\[s_1^2\lambda^2+ab(r_2^2-s_2^2\lambda^2)=0\]
				
				We define
				\[t_1=\frac{1+\lambda^2}{1-\lambda^2},t_2=\frac{2\lambda}{1-\lambda^2}\]
				and
				\begin{align*}
					\phi_1 & =-2s_1t_2\lambda, \\
					\phi_2 & =\frac{a}{2s_1}-s_1t_1t_2, \\
					\phi_3 & =\frac{2s_1^2t_2\lambda^2}{a(r_2+s_2\lambda)}, \\
					\phi_4 & =-\frac{\lambda(a+2s_1^2t_1t_2)}{2a(r_2+s_2\lambda)}.
				\end{align*}
				
				We have that
				\begin{align}
					\frac{1}{a}\phi_1\bar{\phi}_1+\frac{1}{b}\phi_3\bar{\phi}_3 & =\frac{4s_1^2t_2^2\lambda^2}{a}+\frac{1}{b}\frac{4s_1^4t_2^2\lambda^4}{a^2(r_2^2-s_2^2\lambda^2)} \label{eq:phi11}\\
					& =\frac{4s_1^2t_2^2\lambda^2}{a}-\frac{4s_1^2t_2^2\lambda^2}{a} \nonumber\\
					& =0;\nonumber
				\end{align}
				\begin{align}
					\frac{1}{a}\phi_1\bar{\phi}_2+\frac{1}{b}\phi_3\bar{\phi}_4 & =\frac{1}{a}(-2s_1t_2\lambda)\left(-\frac{a}{2s_1}+s_1t_1t_2\right)+\frac{1}{b}\frac{2s_1^2t_2\lambda^2}{a(r_2+s_2\lambda)}\frac{\lambda(a+2s_1^2t_1t_2)}{-2a(r_2-s_2\lambda)} \label{eq:phi12}\\
					& =t_2\lambda-\frac{2s_1^2t_1t_2^2\lambda}{a}-\frac{s_1^2t_2\lambda^3(a+2s_1^2t_1t_2)}{a^2b(r_2^2-s_2^2\lambda^2)} \nonumber\\
					& =t_2\lambda-\frac{2s_1^2t_1t_2^2\lambda}{a}+\frac{t_2\lambda(a+2s_1^2t_1t_2)}{a} \nonumber\\
					& =2t_2\lambda;\nonumber
				\end{align}
				\begin{align}
					\frac{1}{a}\phi_2\bar{\phi}_2+\frac{1}{b}\phi_4\bar{\phi}_4 & =\frac{1}{a}\left(\frac{a}{2s_1}-s_1t_1t_2\right)\left(\frac{-a}{2s_1}+s_1t_1t_2\right)+\frac{1}{b}\frac{\lambda^2(a+2s_1^2t_1t_2)^2}{-4a^2(r_2^2-s_2^2\lambda^2)} \label{eq:phi22}\\
					& =-\frac{1}{a}\left(\frac{a}{2s_1}-s_1t_1t_2\right)^2+\frac{(a+2s_1^2t_1t_2)^2}{4as_1^2} \nonumber\\
					& =-\frac{(a-2s_1^2t_1t_2)^2}{4as_1^2}+\frac{(a+2s_1^2t_1t_2)^2}{4as_1^2} \nonumber\\
					& =\frac{8as_1^2t_1t_2}{4as_1^2} \nonumber\\
					& =2t_1t_2.\nonumber
				\end{align}
				
				Now we set
				\[\phi_5=-\frac{\bar{\phi}_2}{a},\phi_6=\frac{\bar{\phi}_1}{a},\phi_7=-\frac{\bar{\phi}_4}{b},\phi_8=\frac{\bar{\phi}_3}{b}\]
				and, as in the real case,
				\[S_2=\begin{pmatrix}
					\phi_1 & 0 & \phi_2 & 0 \\
					0 & \phi_5 & 0 & \phi_6 \\
					\phi_3 & 0 & \phi_4 & 0 \\
					0 & \phi_7 & 0 & \phi_8
				\end{pmatrix}.\]
				
				This makes
				\[S_2^T\Omega_0S_2=\begin{pmatrix}
					0 & \phi_1\phi_5+\phi_3\phi_7 & 0 & \phi_1\phi_6+\phi_3\phi_8 \\
					-\phi_1\phi_5-\phi_3\phi_7 & 0 & -\phi_2\phi_5-\phi_4\phi_7 & 0 \\
					0 & \phi_2\phi_5+\phi_4\phi_7 & 0 & \phi_2\phi_6+\phi_4\phi_8 \\
					-\phi_1\phi_6-\phi_3\phi_8 & 0 & -\phi_2\phi_6-\phi_4\phi_8 & 0
				\end{pmatrix},\]
				which, using the relations \eqref{eq:phi11}--\eqref{eq:phi22}, becomes
				\[S_2^T\Omega_0S_2=\begin{pmatrix}
					0 & -2t_2\lambda & 0 & 0 \\
					2t_2\lambda & 0 & 2t_1t_2 & 0 \\
					0 & -2t_1t_2 & 0 & -2t_2\lambda \\
					0 & 0 & 2t_2\lambda & 0
				\end{pmatrix}.\]
				
				Let $C_i=S_2^{-1}B_iS_2$, for $i=1$ or $3$. It is clear that $C_1=B_1$. Respecting to $C_3$, first we have that
				\begin{equation}\label{eq:diag-deg-imag}
					\begin{pmatrix}
						\phi_1 & \phi_2 \\
						\phi_3 & \phi_4
					\end{pmatrix}^{-1}
					\begin{pmatrix}
						s_1\lambda & a(r_2+s_2\lambda) \\
						b(r_2-s_2\lambda) & -s_1\lambda
					\end{pmatrix}
					\begin{pmatrix}
						\phi_1 & \phi_2 \\
						\phi_3 & \phi_4
					\end{pmatrix}
				\end{equation}
				\[=\begin{pmatrix}
					\phi_1 & \phi_2 \\
					\phi_3 & \phi_4
				\end{pmatrix}^{-1}
				\begin{pmatrix}
					-2s_1^2t_2\lambda^2+2s_1^2t_2\lambda^2 & \frac{a\lambda}{2}-s_1^2t_1t_2\lambda-\frac{\lambda(a+2s_1^2t_1t_2)}{2} \\
					-2s_1t_2\lambda b(r_2-s_2\lambda)-s_1\lambda\frac{2s_1^2t_2\lambda^2}{a(r_2+s_2\lambda)} & b(r_2-s_2\lambda)(\frac{a}{2s_1}-s_1t_1t_2)+s_1\lambda\frac{\lambda(a+2s_1^2t_1t_2)}{2a(r_2+s_2\lambda)}
				\end{pmatrix}\]
				\[=\begin{pmatrix}
					\phi_1 & \phi_2 \\
					\phi_3 & \phi_4
				\end{pmatrix}^{-1}
				\begin{pmatrix}
					0 & -s_1^2t_1t_2\lambda \\
					-s_1\lambda\frac{2abt_2\lambda(r_2^2-s_2^2\lambda^2)+2s_1^2t_2\lambda^2}{a(r_2+s_2\lambda)} & \frac{ab(r_2^2-s_2^2\lambda^2)(a-2s_1^2t_1t_2)+s_1^2\lambda^2(a+2s_1^2t_1t_2)}{2as_1(r_2+s_2\lambda)}
				\end{pmatrix}\]
				\[=\begin{pmatrix}
					\phi_1 & \phi_2 \\
					\phi_3 & \phi_4
				\end{pmatrix}^{-1}
				\begin{pmatrix}
					0 & -2s_1^2t_1t_2\lambda \\
					0 & \frac{2s_1^3\lambda^2t_1t_2}{a(r_2+s_2\lambda)}
				\end{pmatrix}
				=\begin{pmatrix}
					\phi_1 & \phi_2 \\
					\phi_3 & \phi_4
				\end{pmatrix}^{-1}
				\begin{pmatrix}
					0 & s_1t_1\phi_1 \\
					0 & s_1t_1\phi_3
				\end{pmatrix}
				=\begin{pmatrix}
					0 & s_1t_1 \\
					0 & 0
				\end{pmatrix}.\]
				
				Relations \eqref{eq:phi11}--\eqref{eq:phi22} imply that
				\[\begin{pmatrix}
					\phi_1 & \phi_2 \\
					\phi_3 & \phi_4
				\end{pmatrix}
				\begin{pmatrix}
					\phi_5 & \phi_6 \\
					\phi_7 & \phi_8
				\end{pmatrix}
				=\begin{pmatrix}
					-2t_2\lambda & 0 \\
					-2t_1t_2 & -2t_2\lambda
				\end{pmatrix}\]
				which together with \eqref{eq:diag-deg-imag} give
				\[\begin{pmatrix}
					\phi_5 & \phi_6 \\
					\phi_7 & \phi_8
				\end{pmatrix}^{-1}
				\begin{pmatrix}
					-s_1\lambda & -b(r_2-s_2\lambda) \\
					-a(r_2+s_2\lambda) & s_1\lambda
				\end{pmatrix}
				\begin{pmatrix}
					\phi_5 & \phi_6 \\
					\phi_7 & \phi_8
				\end{pmatrix}
				=\begin{pmatrix}
					0 & 0 \\
					-s_1t_1 & 0
				\end{pmatrix}\]
				and finally
				\begin{equation}\label{eq:C3}
					C_3=S_2^{-1}B_3S_2=\begin{pmatrix}
						0 & 0 & s_1t_1 & 0 \\
						0 & 0 & 0 & 0 \\
						0 & 0 & 0 & 0 \\
						0 & -s_1t_1 & 0 & 0
					\end{pmatrix}.
				\end{equation}
				
				Now we take
				\[S_3=\begin{pmatrix}
					-\frac{t_1}{t_2\lambda^2} & 1 & 0 & \frac{1}{\lambda} \\
					\frac{1}{t_2\lambda} & 0 & \frac{1}{t_2} & 0 \\
					-\frac{1}{t_2\lambda} & 0 & \frac{1}{t_2} & 0 \\
					\frac{t_1}{t_2\lambda^2} & 1 & 0 & -\frac{1}{\lambda}
				\end{pmatrix}.\]
				This leads to
				\[S_3^TS_2^T\Omega_0S_2S_3=S_3^T\begin{pmatrix}
					0 & -2t_2\lambda & 0 & 0 \\
					2t_2\lambda & 0 & 2t_1t_2 & 0 \\
					0 & -2t_1t_2 & 0 & -2t_2\lambda \\
					0 & 0 & 2t_2\lambda & 0
				\end{pmatrix}S_3=\Omega_0,\]
				\begin{equation}\label{eq:S3C1}
					S_3^{-1}C_1S_3=\begin{pmatrix}
						0 & 0 & -\lambda^2 & 0 \\
						\frac{t_1}{t_2\lambda} & 0 & 0 & 1 \\
						-1 & 0 & 0 & 0 \\
						0 & \lambda^2 & \frac{t_1\lambda}{t_2} & 0
					\end{pmatrix},
				\end{equation}
				and
				\[S_3^{-1}C_3S_3=\begin{pmatrix}
					0 & 0 & 0 & 0 \\
					-\frac{s_1t_1}{t_2\lambda} & 0 & 0 & 0 \\
					0 & 0 & 0 & 0 \\
					0 & 0 & \frac{s_1t_1\lambda}{t_2} & 0
				\end{pmatrix}.\]
				After dividing the second matrix by $-s_1t_1/t_2\lambda$ and adding it to the first multiplied by $-t_1t_2/\lambda$, the matrices become
				\[\begin{pmatrix}
					0 & 0 & -\lambda^2 & 0 \\
					0 & 0 & 0 & 1 \\
					-1 & 0 & 0 & 0 \\
					0 & \lambda^2 & \frac{2t_1\lambda}{t_2} & 0
				\end{pmatrix}
				\text{ and }
				\begin{pmatrix}
					0 & 0 & 0 & 0 \\
					1 & 0 & 0 & 0 \\
					0 & 0 & 0 & 0 \\
					0 & 0 & -\lambda^2 & 0
				\end{pmatrix}\]
				and we are in case (I3$(\lambda^2)$). We need to show that the symplectic matrix $S_1S_2S_3$ by which we are multiplying is in $K$:
				\[S_1S_2S_3=\begin{pmatrix}
					u & a\bar{u} & v & b\bar{v}
				\end{pmatrix}
				\begin{pmatrix}
					-\frac{t_1\phi_1}{t_2\lambda^2}-\frac{\phi_2}{t_2\lambda} & \phi_1 & \frac{\phi_2}{t_2} & \frac{\phi_1}{\lambda} \\
					\frac{t_1\phi_6}{t_2\lambda^2}+\frac{\phi_5}{t_2\lambda} & \phi_6 & \frac{\phi_5}{t_2} & -\frac{\phi_6}{\lambda} \\
					-\frac{t_1\phi_3}{t_2\lambda^2}-\frac{\phi_4}{t_2\lambda} & \phi_3 & \frac{\phi_4}{t_2} & \frac{\phi_3}{\lambda} \\
					\frac{t_1\phi_8}{t_2\lambda^2}+\frac{\phi_7}{t_2\lambda} & \phi_8 & \frac{\phi_7}{t_2} & -\frac{\phi_8}{\lambda}
				\end{pmatrix}\]
				This is real because \[\overline{\phi_1u+\phi_3v}=\phi_6a\bar{u}+\phi_8b\bar{v}\] and \[\overline{\phi_2u+\phi_4v}=-\phi_5a\bar{u}-\phi_7b\bar{v},\] which follows from the definitions of the $\phi_i$'s.
				
				\item $B_1$ is not diagonal
				
				As $B_1B_2=B_2B_1$, the form of $B_1$ implies $\mu_1=\mu_4$, $\mu_5=\mu_8$ and $\mu_3=\mu_6=0$.
				
				Now the columns of $S_1$ have instead the form $(u,-a\bar{v},v,a\bar{u})$, for $u,v\in K[\lambda]^4$ and $a\in K[\lambda]$. We also have that
				\[-au^T\Omega_0\bar{v}=1=\bar{1}=\overline{av^T\Omega_0\bar{u}}=-\bar{a}u^T\Omega_0\bar{v},\]
				that is, $\bar{a}=a$ and $a\in K$. Using that $A_2S_1=S_1 B_2$,
				\[A_2\begin{pmatrix}
					u & -a\bar{v} & v & a\bar{u}
				\end{pmatrix}=
				\begin{pmatrix}
					\mu_1 u & -\mu_5a\bar{v}+\mu_7a\bar{u} & \mu_2 u+\mu_1 v & \mu_5a\bar{u}
				\end{pmatrix}\]
				which implies that \[a\overline{\mu_1 u}=\mu_5a\bar{u}=-\mu_1 a\bar{u},\] which implies that $\mu_1$ is imaginary, and \[-a(\overline{\mu_2 u+\mu_1 v})=-\mu_5a\bar{v}+\mu_7a\bar{u}=\mu_1a\bar{v}-\mu_2a\bar{u},\] which in turn implies that $\mu_2$ is in $K$. At this point we have that
				\[B_2=\begin{pmatrix}
					s_1\lambda & 0 & r_2 & 0 \\
					0 & -s_1\lambda & 0 & 0 \\
					0 & 0 & -s_1\lambda & 0 \\
					0 & -r_2 & 0 & s_1\lambda
				\end{pmatrix}.\]
				Making $C_1=B_1$ and $C_3=B_2-s_1B_1$, we are in the same situation as in the previous case \eqref{eq:C3}, but with $r_2$ instead of $s_1t_1$. If $r_2\ne 0$, we can apply the same matrix $S_3$ and we get again to case (I3$(\lambda^2)$). Otherwise, $C_3=0$, and $C_1$ can be brought by the same $S_3$ to the matrix in \eqref{eq:S3C1}, and we are in case (I2$(\lambda^2)$).\qedhere
			\end{enumerate}
		\end{enumerate}
	\end{enumerate}
\end{proof}

\begin{remark}
	In Theorems \ref{thm:real-degenerate} and \ref{thm:padic-degenerate}, unlike the other classifications, we do not have a uniqueness result. The reason is that, to find a set of classes that contains all cases, one has to find for each case a way to change it into the normal form of the class, which here means to find a local symplectomorphism, and in particular a linear symplectomorphism (that is, a multiplication by a symplectic matrix). But to prove that the class is unique for a given case, one has to \textit{prove that such a matrix does not exist}. In the non-degenerate cases this was possible thanks to the difference in eigenvalues, but now we have many cases with the same eigenvalues and the same strategy does not work.
\end{remark}

\begin{remark}
	In Theorem \ref{thm:padic-degenerate} we need $M$ and $F$ to be analytic, while we do not need that in Theorem \ref{thm:real-degenerate}. That is because Lemma \ref{lemma:hessian} only works if the function is analytic, while the corresponding result in the real case is true for any smooth function.
\end{remark}

\begin{remark}
	There is an extensive theory of quadratic forms over different types of fields, we refer to the classical treatment \cite{Meara} and the more recent works by Alsina-Bayer \cite{AlsBay}, Bhargava \cite{Bhargava}, Casselman \cite{Casselman} and Lam \cite{Lam} and the references therein. In Theorem \ref{thm:integrable} and Theorem \ref{thm:padic-degenerate} we have presented a list of normal forms of integrable systems up to local symplectic transformations, given by sums of binary quadratic forms, but we have not carried out a further analysis of the structure/properties of these forms since this does not appear to us as applicable in our context of symplectic geometry of integrable systems.
\end{remark}

\subsection{Symplectic dynamics of integrable systems and their level sets}

We now calculate the vector fields generated by the integrable systems of Theorem \ref{thm:integrable}.

\begin{proposition}
	The vector fields generated by the integrable systems of Theorem \ref{thm:integrable} are as follows:
	\begin{enumerate}
		\item $X_{g_1}=(2c_1\xi,-2x,0,0),X_{g_2}=(0,0,2c_2\eta,-2y)$.
		\item $X_{g_1}=(cy,-\eta,x,-c\xi),X_{g_2}=(x,-\xi,y,-\eta)$.
		\item
		\[X_{g_1}=\left(-\frac{t_1+bt_2}{a}\xi-(bt_1+ct_2)\eta,\frac{acx-by}{b^2-c},-ac(t_1+bt_2)\eta-(bt_1+ct_2)\xi,\frac{y-abx}{a(b^2-c)}\right),\]
		\[X_{g_2}=\left(-\frac{bt_1+ct_2}{a}\xi-c(t_1+bt_2)\eta,\frac{cy-abcx}{b^2-c},-ac(bt_1+ct_2)\eta-c(t_1+bt_2)\xi,\frac{acx-by}{a(b^2-c)}\right).\]
	\end{enumerate}
\end{proposition}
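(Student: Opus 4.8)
\emph{Proof proposal.} The plan is to obtain each vector field directly from the definition of a Hamiltonian vector field on the standard symplectic vector space $((\Qp)^4,\omega_0)$. In the coordinates $(x,\xi,y,\eta)$ the form $\omega_0$ pairs $x$ with $\xi$ and $y$ with $\eta$, so for a $p$-adic analytic function $g$ the Hamiltonian vector field is
\[
X_g=\frac{\partial g}{\partial\xi}\parder{x}-\frac{\partial g}{\partial x}\parder{\xi}+\frac{\partial g}{\partial\eta}\parder{y}-\frac{\partial g}{\partial y}\parder{\eta},
\]
which in matrix form is $X_g=\Omega_0\,\nabla g=-\Omega_0^{-1}\nabla g$ (using $\Omega_0^{-1}=-\Omega_0$), and for the homogeneous quadratic Hamiltonians occurring here $\nabla g=(\dd^2 g)\,(x,\xi,y,\eta)^T$, so every $X_{g_k}$ is a globally defined linear vector field $v\mapsto -(\Omega_0^{-1}\dd^2 g_k)\,v$. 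I would begin by fixing the sign via the sanity check $X_{x^2+c_1\xi^2}=(2c_1\xi,-2x,0,0)$.

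Cases (1) and (2) then require essentially no computation: there $g_1,g_2$ are quadratics of very simple shape, so the four partial derivatives are read off immediately and the listed formulas follow. (The rank $1$ normal form $g_1=x^2+c\xi^2$, $g_2=\eta$ would likewise give $(2c\xi,-2x,0,0)$ and $(0,0,0,-1)$; the statement records only the rank $0$ cases.)

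The only case needing real work is (3). The quickest route is to note that $\Omega_0^{-1}\dd^2 g_k$ is exactly the matrix $\Omega_0^{-1}N$ computed explicitly in the proof of Proposition \ref{prop:exotic} (specialized to the parameters of $g_k$, i.e.\ to the relevant row of Table \ref{table:canonical} and the choice $(a,b)$), so $X_{g_k}$ is the negative of that matrix applied to $(x,\xi,y,\eta)^T$; alternatively one substitutes $g_k=\sum_{i=0}^{2}\mathcal{C}_i^k x^iy^{2-i}+\sum_{i=0}^{2}\mathcal{D}_i^k\xi^i\eta^{2-i}$ from Theorem \ref{thm:integrable}, differentiates term by term, and then inserts the coefficient functions from Definition \ref{def:sets} and simplifies. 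Either way there is no conceptual obstacle; the one genuine difficulty is bookkeeping — matching the coordinate ordering and signs between the matrix statements and the coordinate expressions, and keeping the six coefficient functions $\mathcal{C}_0^k,\dots,\mathcal{D}_2^k$ straight for $k=1,2$. I would keep this under control by cross-checking the output against identities forced on it: $X_{g_1},X_{g_2}$ must be linear, tangent to the common level sets of $(g_1,g_2)$, and satisfy $\omega_0(X_{g_1},X_{g_2})=\{g_1,g_2\}=0$ since $(g_1,g_2)$ is an integrable system. Carrying the simplification through produces exactly the three displayed expressions.
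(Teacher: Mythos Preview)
Your approach is correct and is exactly what the paper does: its entire proof is the single sentence ``All fields are calculated applying directly the equation $\imath_{X_f}\omega_0=\dd f$.'' Your write-up simply spells out this computation with more care (the explicit formula $X_g=(g_\xi,-g_x,g_\eta,-g_y)$ and the cross-checks), but the method is identical.
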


\begin{proof}
	All fields are calculated applying directly the equation $\imath_{X_f}\omega_0=\dd f$.
\end{proof}

We can check that, in each system, $g_1$ and $g_2$ Poisson commute, which is equivalent to checking that, if $A_i=\Omega_0^{-1}\dd^2 g_i$, we have $A_1A_2=A_2A_1$. This matrix is zero in case (1),
\[\begin{pmatrix}
	0 & 0 & c & 0 \\
	0 & 0 & 0 & 1 \\
	1 & 0 & 0 & 0 \\
	0 & c & 0 & 0
\end{pmatrix}\]
in case (2), and
\[\begin{pmatrix}
	ct_2 & 0 & \frac{t_1}{a} & 0 \\
	0 & ct_2 & 0 & act_1 \\
	act_1 & 0 & ct_2 & 0 \\
	0 & \frac{t_1}{a} & 0 & ct_2
\end{pmatrix}\]
in case (3).

We can also calculate the fibers of the systems.

\begin{proposition}
	The fibers $F^{-1}(0,0)$ of the integrable systems of Theorem \ref{thm:integrable} with a critical point of rank $0$ are as follows:
	\begin{enumerate}
		\item $\{(\pm d_1\xi,\xi,\pm d_2\eta,\eta):\xi,\eta\in\Qp\}$ if $-c_1=d_1^2$ and $-c_2=d_2^2$, $\{(\pm d_1\xi,\xi,0,0):\xi\in\Qp\}$ if $-c_1=d_1^2$ and $-c_2$ is not a square, and $\{(0,0,0,0)\}$ if $-c_1$ and $-c_2$ are not squares.
		\item $\{(x,0,y,0):x,y\in\Qp\}\cup\{(0,\xi,0,\eta):\xi,\eta\in\Qp\}$.
		\item $\{(0,0,0,0)\}$.
	\end{enumerate}
	For those in which the origin has rank $1$, the fibers are $\{(\pm d\xi,\xi,y,0):\xi,y\in\Qp\}$ if $-c=d^2$ and $\{(0,0,y,0):y\in\Qp\}$ otherwise.
\end{proposition}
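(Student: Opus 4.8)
The plan is to solve, in each of the cases of Theorem \ref{thm:integrable}, the equations cutting out the fibre: $\{g_1=g_2=0\}$ at a rank $0$ point, and $\{g_1=0,\ g_2=\eta=0\}$ at a rank $1$ point. Cases (1) and the rank $1$ case decouple into independent $2$-planes: in each such plane the equation reads $x^2=-c_i\xi^2$, which is solved by $x=\pm d_i\xi$ with $\xi$ free when $-c_i=d_i^2$ is a square, and which forces $\xi=x=0$ otherwise (a nonzero $\xi$ would exhibit $-c_i=(x/\xi)^2$ as a square). Reassembling the planes — with the two blocks of case (1) interchangeable — yields the stated fibres; in the rank $1$ case one simply adjoins $\eta=0$ and leaves $y$ free.

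For case (2), I would view $g_1=x\eta+cy\xi=0$ and $g_2=x\xi+y\eta=0$, with $(x,y)$ fixed, as the homogeneous linear system
\[
\begin{pmatrix} cy & x\\ x & y\end{pmatrix}\begin{pmatrix}\xi\\ \eta\end{pmatrix}=\begin{pmatrix}0\\ 0\end{pmatrix},
\]
whose determinant is $cy^2-x^2$. Since $c\in Y_p$ is a quadratic non-residue, $cy^2=x^2$ can hold only when $x=y=0$; thus either $(x,y)\ne(0,0)$ and the matrix is invertible, forcing $(\xi,\eta)=(0,0)$, or $(x,y)=(0,0)$ and $(\xi,\eta)$ is arbitrary. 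The fibre is the union of these two families, as claimed.

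Case (3) is where the work lies. Each $g_k$ is an orthogonal sum $g_k=P_k(x,y)\perp Q_k(\xi,\eta)$ of a binary quadratic form in $(x,y)$ and one in $(\xi,\eta)$, so the same holds for every combination $rg_1+sg_2$. A computation with the coefficient functions $\mathcal{C}_i^k,\mathcal{D}_i^k$ — in which one uses $t_1^2-ct_2^2=(\gamma\bar{\gamma})^2\in\Sq(\Qp^*)$ — shows that the $(x,y)$-part and the $(\xi,\eta)$-part of $rg_1+sg_2$ have the \emph{same} discriminant, equal to $(b^2-c)(r^2-cs^2)$ up to squares. Because $c$ is a non-residue, $b^2-c$ lies in the norm group $\DSq(\Qp,-c)=\{x^2-cy^2\}$, which is a subgroup of $\Qp^*$ of index $2$ and hence contains non-squares, while $r^2-cs^2$ runs over all of $\DSq(\Qp,-c)$ as $(r,s)$ varies. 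One can therefore choose $(r,s)\ne(0,0)$ making $(b^2-c)(r^2-cs^2)$ a non-square in $\Qp$; for such $(r,s)$ both binary pieces of $rg_1+sg_2$ are anisotropic, so $rg_1+sg_2\cong s_1B\perp s_2B'$ with $B,B'$ anisotropic binary forms of that common discriminant. A final comparison of the (explicit) scalar factors $s_1,s_2$ against the subgroup $\DSq(\Qp,(b^2-c)(r^2-cs^2))$, read off from Propositions \ref{prop:images-extra} and \ref{prop:images-extra2}, shows that $(r,s)$ can actually be chosen so that $rg_1+sg_2$ is anisotropic over $\Qp$; then $F^{-1}(0,0)=\{g_1=g_2=0\}\subseteq\{rg_1+sg_2=0\}=\{(0,0,0,0)\}$, and the reverse inclusion is trivial. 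Conceptually, in the exotic configuration of Proposition \ref{prop:exotic} the combination $rg_1+sg_2$ is, after identifying $\Qp^4$ with $\Qp[\gamma,\bar{\gamma}]$, a norm-type form of the degree $4$ extension $\Qp[\gamma,\bar{\gamma}]/\Qp$, which explains why an anisotropic representative exists.

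I expect the main obstacle to be precisely this last anisotropy verification for case (3): it splits according to $p\bmod 4$, according to whether $p=2$, and according to the individual rows of Table \ref{table:canonical}, so although every instance reduces to a short calculation with the $\DSq$-descriptions of Section \ref{sec:dim2}, there are a fair number of them to dispatch. Cases (1), (2) and the rank $1$ case require nothing beyond the elementary manipulations above.
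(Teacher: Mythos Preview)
Your treatment of cases (1), (2) and the rank $1$ case matches the paper's argument essentially verbatim.

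For case (3), however, your approach diverges from the paper's and contains a genuine error. You assert that $t_1^2-ct_2^2=(\gamma\bar\gamma)^2\in\Sq(\Qp^*)$, but this fails whenever the Galois group of $\Qp[\gamma,\bar\gamma]/\Qp$ is cyclic of order $4$ rather than $\Z/2\times\Z/2$: if $\sigma$ is a generator then $\sigma(\gamma)=\bar\gamma$ but $\sigma(\bar\gamma)=-\gamma$, so $\sigma(\gamma\bar\gamma)=-\gamma\bar\gamma$ and $\gamma\bar\gamma\notin\Qp$. Concretely, take $p\equiv 3\bmod 4$, $c=-1$, and the row $(t_1,t_2)=(a_0,b_0)$ of Table~\ref{table:canonical}: then $t_1^2-ct_2^2=a_0^2+b_0^2\equiv -1\bmod p$, a non-square. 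This wrecks your discriminant comparison, since from Definition~\ref{def:sets} the $(x,y)$-part of $g_1$ has discriminant $1/(b^2-c)$ while its $(\xi,\eta)$-part has discriminant $(b^2-c)(t_1^2-ct_2^2)$, and these agree modulo squares precisely when $t_1^2-ct_2^2$ is a square. The anisotropy strategy you sketch would therefore have to be rebuilt from scratch, not just ``dispatched'' case by case.

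The paper sidesteps all of this by reusing the matrix $\Psi_2$ of Proposition~\ref{prop:exotic}, which diagonalizes $\Omega_0^{-1}M$ over $\Qp[\gamma,\bar\gamma]$. Writing $(x,\xi,y,\eta)^T=\Psi_2(x',\xi',y',\eta')^T$, the hat-conjugacy pattern in the columns of $\Psi_2$ forces $\xi'=\hat{x}'$ and $\eta'=\hat{y}'$, and the form $rg_1+sg_2$ becomes (up to a constant) $\gamma^2(b+\alpha)(r+s\alpha)x'\hat{x}'+\bar\gamma^2(b-\alpha)(r-s\alpha)y'\hat{y}'$. Specializing to $(r,s)=(1,0)$ and $(0,1)$ gives two independent relations that together force $x'\hat{x}'=y'\hat{y}'=0$, hence $x'=y'=0$ since $\Qp[\gamma,\bar\gamma]$ is a field. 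This uses both equations $g_1=0$ and $g_2=0$ simultaneously rather than a single anisotropic combination, and requires no case analysis.
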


\begin{proof}
	The part about rank $1$ is immediate from the formula $(x^2+c\xi^2,\eta)$. For the rank $0$ part, we have:
	\begin{enumerate}
		\item This follows from making $x^2+c_1\xi^2=y^2+c_2\eta^2=0$.
		\item We make $x\eta+cy\xi=x\xi+y\eta=0$. Considering this a system in $(\xi,\eta)$, we have two possible cases:
		\begin{itemize}
			\item The determinant of the coefficient matrix is $0$. Then $cy^2-x^2=0$. Since $c$ is not a square, $x=y=0$.
			\item The determinant of the coefficient matrix is not $0$. Then $\xi=\eta=0$.
		\end{itemize}
		\item We consider the coordinate change given by
		\[\Psi=\begin{pmatrix}
			(b+\alpha)\gamma & -(b+\alpha)\gamma & (b-\alpha)\bar{\gamma} & -(b-\alpha)\bar{\gamma} \\
			a\alpha & a\alpha & -a\alpha & -a\alpha \\
			a\alpha\gamma(b+\alpha) & -a\alpha\gamma(b+\alpha) & -a\alpha\bar{\gamma}(b-\alpha) & a\alpha\bar{\gamma}(b-\alpha) \\
			1 & 1 & 1 & 1
		\end{pmatrix},\]
		where $\alpha=\sqrt{c}$, $\gamma=\sqrt{t_1+t_2\alpha}$, $x\mapsto\bar{x}$ is the automorphism of $\Qp[\alpha]$ which sends $\alpha$ to $-\alpha$, and $x\mapsto\hat{x}$ is the automorphism of $\Qp[\gamma,\bar{\gamma}]$ which sends $\gamma$ to $-\gamma$ and $\bar{\gamma}$ to $-\bar{\gamma}$. We define
		\[\begin{pmatrix}
			x \\ \xi \\ y \\ \eta
		\end{pmatrix}=\Psi
		\begin{pmatrix}
			x' \\ \xi' \\ y' \\ \eta'
		\end{pmatrix}.\]
		We have that the first column of $\Psi$ is the hat-conjugate of the second, the same happens for the third and fourth, and the original coordinates are all in $\Qp$ and they are their own conjugates, hence
		\[\begin{pmatrix}
			x \\ \xi \\ y \\ \eta
		\end{pmatrix}=
		\begin{pmatrix}
			\hat{x} \\ \hat{\xi} \\ \hat{y} \\ \hat{\eta}
		\end{pmatrix}=
		\hat{\Psi}\begin{pmatrix}
			\hat{x}' \\ \hat{\xi}' \\ \hat{y}' \\ \hat{\eta}'
		\end{pmatrix}=
		\Psi\begin{pmatrix}
			\hat{\xi}' \\ \hat{x}' \\ \hat{\eta}' \\ \hat{y}'
		\end{pmatrix}\]
		which implies $\xi'=\hat{x}'$ and $\eta'=\hat{y}'$. Now, if $(x,\xi,y,\eta)$ is a point in the fiber,
		\[0=\begin{pmatrix}
			x & \xi & y & \eta
		\end{pmatrix}
		M\begin{pmatrix}
			x \\ \xi \\ y \\ \eta
		\end{pmatrix}=
		\begin{pmatrix}
			x' & \xi' & y' & \eta'
		\end{pmatrix}
		\Psi_2^TM\Psi_2\begin{pmatrix}
			x' \\ \xi' \\ y' \\ \eta'
		\end{pmatrix}\]
		where $M$ is the matrix of the normal form. We know that $\Psi_2$ diagonalizes $\Omega_0^{-1}M$, so
		\[\Psi_2^TM\Psi_2=\Psi_2^T\Omega_0\Psi_2\Psi_2^{-1}\Omega_0^{-1}M\Psi_2\]
		\[=\begin{pmatrix}
			0 & 4a\alpha\gamma(b+\alpha) & 0 & 0 \\
			-4a\alpha\gamma(b+\alpha) & 0 & 0 & 0 \\
			0 & 0 & 0 & -4a\alpha\bar{\gamma}(b-\alpha) \\
			0 & 0 & 4a\alpha\bar{\gamma}(b-\alpha) & 0
		\end{pmatrix}\begin{pmatrix}
			\lambda & 0 & 0 & 0 \\
			0 & -\lambda & 0 & 0 \\
			0 & 0 & \mu & 0 \\
			0 & 0 & 0 & -\mu
		\end{pmatrix}\]
		\[=-4a\alpha\begin{pmatrix}
			0 & \gamma(b+\alpha)\lambda & 0 & 0 \\
			\gamma(b+\alpha)\lambda & 0 & 0 & 0 \\
			0 & 0 & 0 & \bar{\gamma}(b-\alpha)\mu \\
			0 & 0 & \bar{\gamma}(b-\alpha)\mu
		\end{pmatrix}.\]
		Putting this together, we get
		\[0=\gamma(b+\alpha)\lambda x'\xi'+\bar{\gamma}(b-\alpha)\mu y'\eta'=\gamma^2(b+\alpha)(r+s\alpha)x'\hat{x}'+\bar{\gamma}^2(b-\alpha)(r-s\alpha)y'\hat{y}'\]
		This must hold for all $r,s\in\Qp$. Putting $(r,s)=(1,0)$ and $(0,1)$,
		\[\gamma^2(b+\alpha)x'\hat{x}'+\bar{\gamma}^2(b-\alpha)y'\hat{y}'=\alpha\gamma^2(b+\alpha)x'\hat{x}'-\alpha\bar{\gamma}^2(b-\alpha)y'\hat{y}'=0\]
		These two equations imply $x'\hat{x}'=0$ and $y'\hat{y}'=0$, that is, $x'=y'=0$, which in turn implies $\xi'=\eta'=0$ and the vector is zero.\qedhere
	\end{enumerate}
\end{proof}

In analogy with the real case, a submanifold $N$ of a $2n$-dimensional $p$-adic analytic symplectic manifold $(M,\omega)$ is said to be \emph{isotropic} if the tangent space at each point of $N$ is an isotropic subspace of the tangent space of $M$, that is, if $\omega(u,v)=0$ for any two vectors $u,v\in\mathrm{T}_mN$. It is called \emph{Lagrangian} if it is isotropic and with dimension $n$.

The fibers of regular points of real integrable systems are Lagrangian, and homeomorphic to tori (for this reason, it is called a singular Lagrangian torus fibration). For $p$-adic integrable systems the situation is more complicated to describe in general, even for concrete examples (such as the Jaynes-Cummings model treated in our paper \cite{CrePel-JC}). However, there is a common point:

\begin{proposition}
	Let $n$ be a positive integer. \letpprime. Let $(M,\omega)$ be a $2n$-dimensional $p$-adic symplectic manifold. Let $F:(M,\omega)\to(\Qp)^n$ be a $p$-adic analytic integrable system. Suppose that the components of $F$ are either regular components $\xi_i$ or given by one of the normal forms of Theorem \ref{thm:integrable}. Then the fiber $F^{-1}(0)$ is a union of isotropic subspaces intersecting at the origin, and if all the components are regular, then the fiber is a Lagrangian subspace.
\end{proposition}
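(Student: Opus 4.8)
The plan is to reduce the whole statement to a product over independent symplectic blocks and then simply quote the fiber computations already carried out above.

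First I would invoke Theorem \ref{thm:omega0} to assume $(M,\omega)=((\Qp)^{2n},\omega_0)$ with $\omega_0$ in the standard block-diagonal form; since composing $F$ with an invertible matrix fixes $0$ and hence leaves $F^{-1}(0)$ unchanged, I may also assume that, in suitable linear symplectic coordinates $(x_1,\xi_1,\dots,x_n,\xi_n)$, $F$ is literally a \emph{block sum}: the coordinates split into groups, each group being a pair $(x_i,\xi_i)$ carrying a regular component $\xi_i$, or a quadruple $(x,\xi,y,\eta)$ carrying two functions equal to one of the normal forms of Theorem \ref{thm:integrable} (cases (1), (2), (3), or the rank $1$ model $g_1=x^2+c\xi^2$, $g_2=\eta$). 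The structural fact to record at this point is that $\omega_0$ is block-diagonal with respect to this partition, so $(\Qp)^{2n}$ is an orthogonal direct sum of symplectic subspaces, and consequently a direct sum of isotropic subspaces, one per group, is again isotropic: $\omega_0$ pairs no vector of one group with a vector of another.

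Next, since $F^{-1}(0)$ is the set where every block of components vanishes, it is the direct product over the groups of the block fibers. For a regular component the block fiber inside $(x_i,\xi_i)$ is the line $\{\xi_i=0\}=\langle e_{x_i}\rangle$; for the remaining cases the block fiber has already been computed in the proposition immediately above, namely $\{(\pm d_1\xi,\xi,\pm d_2\eta,\eta)\}$ or a subspace thereof in case (1), $\{(x,0,y,0)\}\cup\{(0,\xi,0,\eta)\}$ in case (2), $\{0\}$ in case (3), and $\{(\pm d\xi,\xi,y,0)\}$ or $\{(0,0,y,0)\}$ in the rank $1$ case. In every instance the block fiber is a finite union of linear subspaces through the origin, and I would check by inspection that each occurring subspace is isotropic: it is spanned by vectors lying in pairwise symplectically orthogonal coordinate $2$-planes (a vector of the $(x,\xi)$-plane together with one of the $(y,\eta)$-plane, or $e_{x_i}$ alone, and so on), and since every vector is $\omega_0$-orthogonal to itself all the pairings vanish.

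Finally, distributing the product over these finite unions exhibits $F^{-1}(0)$ as a finite union of subspaces, each a direct sum of isotropic block subspaces (hence isotropic) and each containing the origin, which gives the first claim. When every component is regular there is a unique such subspace, $\bigoplus_i\langle e_{x_i}\rangle=\{\xi_1=\dots=\xi_n=0\}$, of dimension $n$ and isotropic, hence Lagrangian. I expect the only genuinely delicate point to be the first step: making precise that the hypothesis ``the components are regular components or normal forms of Theorem \ref{thm:integrable}'' forces $F$ to be a symplectically orthogonal block sum; once that reduction is in place, everything else is bookkeeping layered on top of the fiber formulas already established.
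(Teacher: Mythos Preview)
Your proposal is correct and follows essentially the same route as the paper's proof: reduce to a block-diagonal situation where $\omega_0$ does not mix coordinates of different groups, quote the fiber computations from the preceding proposition, and verify isotropy of each block fiber by checking that the spanning vectors (which lie in distinct coordinate $2$-planes) pair to zero under $\omega_0$. The paper is considerably terser and performs the isotropy check by writing out exactly the three pairings $\omega_0((\pm d_1,1,0,0),(0,0,\pm d_2,1))=\omega_0((1,0,0,0),(0,0,1,0))=\omega_0((\pm d,1,0,0),(0,0,1,0))=0$, but your more detailed block-sum bookkeeping amounts to the same argument.
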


\begin{proof}
	For the regular case, where the system is $(\xi_1,\ldots,\xi_n)$, clearly the fiber of $0$ is a Lagrangian subspace. Otherwise, it is enough to prove the statement for the dimension $2$ and $4$ normal forms, and the conclusion follows by multiplying, because the symplectic form $\omega_0$ does not mix the coordinates of different components. For the normal forms, the result reduces to checking that
	\[\omega_0((\pm d_1,1,0,0),(0,0,\pm d_2,1))=\omega_0((1,0,0,0),(0,0,1,0))\]\[=\omega_0((\pm d,1,0,0),(0,0,1,0))=0.\qedhere\]
\end{proof}

From the point of view of integrable systems and symplectic singularity theory, the fibers and images of the local models in the $p$-adic case are very interesting and include for example the images displayed in Figures \ref{fig:images1} and \ref{fig:images2} and the fibers displayed in Figures \ref{fig:fibers1} and \ref{fig:fibers2}.

\begin{figure}
	\begin{center}
		\begin{tabular}{ccc}
			\begin{tikzpicture}
				\fill[green] (-1,-1)--(-1,1)--(1,1)--(1,-1);
				\axes
			\end{tikzpicture} &
			\begin{tikzpicture}
				\fill[green] (0,-1)--(0,1)--(1,1)--(1,-1);
				\axes
				\draw[line width=2,blue] (0,-1)--(0,1);
			\end{tikzpicture} &
			\begin{tikzpicture}
				\fill[green] (-1,-1)--(-1,1)--(1,1)--(1,-1);
				\axes
				\draw[line width=2,blue] (0,-1)--(0,1);
			\end{tikzpicture} \\
			Regular & Transversally & Transversally \\
			& elliptic & hyperbolic
		\end{tabular}
		
		\begin{tabular}{cccc}
			\begin{tikzpicture}
				\fill[green] (0,0)--(0,1)--(1,1)--(1,0);
				\axes
				\draw[line width=2,blue] (1,0)--(0,0)--(0,1);
				\fill[red] (0,0) circle (0.05);
			\end{tikzpicture} &
			\begin{tikzpicture}
				\fill[green] (0,-1)--(0,1)--(1,1)--(1,-1);
				\axes
				\draw[line width=2,blue] (0,-1)--(0,1);
				\draw[line width=2,blue] (0,0)--(1,0);
				\fill[red] (0,0) circle (0.05);
			\end{tikzpicture} &
			\begin{tikzpicture}
				\fill[green] (-1,-1)--(-1,1)--(1,1)--(1,-1);
				\axes
				\draw[line width=2,blue] (0,-1)--(0,1);
				\draw[line width=2,blue] (-1,0)--(1,0);
				\fill[red] (0,0) circle (0.05);
			\end{tikzpicture} &
			\begin{tikzpicture}
				\fill[green] (-1,-1)--(-1,1)--(1,1)--(1,-1);
				\axes
				\fill[red] (0,0) circle (0.05);
			\end{tikzpicture} \\
			Elliptic- & Elliptic- & Hyperbolic- & Focus-focus \\
			elliptic & hyperbolic & hyperbolic & 
		\end{tabular}
	\end{center}
	\caption{Images of some normal forms for the real case and the $p$-adic case with $p\not\equiv 1\mod 4$. In the real case, the positive and negative sides of the axes represent, as usual, positive and negative numbers; if $p=2$, the ``positive'' and ``negative'' sides represent numbers whose second digit is $0$ and $1$, respectively; finally, if $p\equiv 3\mod 4$, the ``positive'' and ``negative'' sides represent even-order and odd-order numbers, respectively. (The points on the axes themselves have, as usual, a zero coordinate.) In each drawing, the green region represents regular values, the blue points are rank $1$ critical values, and the red points are rank $0$ critical values. An elliptic component has $c=1$ in the notation of Corollary \ref{cor:williamson}, a hyperbolic one has $c=-1$, and a focus-focus one has $c=-1$ in part (2) of Theorem \ref{thm:integrable}.}
	\label{fig:images1}
\end{figure}
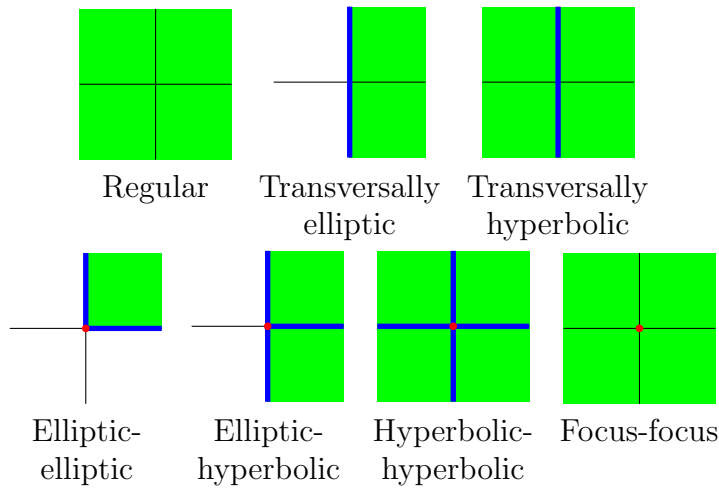

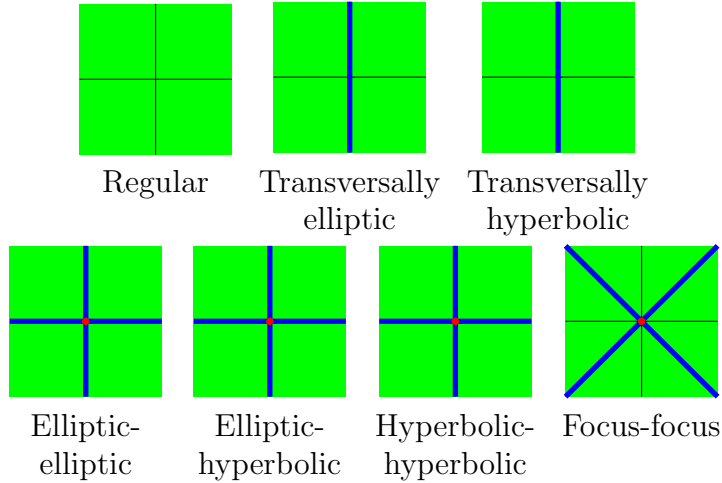
\begin{figure}
	\begin{center}
		\begin{tabular}{ccc}
			\begin{tikzpicture}
				\fill[green] (-1,-1)--(-1,1)--(1,1)--(1,-1);
				\axes
			\end{tikzpicture} &
			\begin{tikzpicture}
				\fill[green] (-1,-1)--(-1,1)--(1,1)--(1,-1);
				\axes
				\draw[line width=2,blue] (0,-1)--(0,1);
			\end{tikzpicture} &
			\begin{tikzpicture}
				\fill[green] (-1,-1)--(-1,1)--(1,1)--(1,-1);
				\axes
				\draw[line width=2,blue] (0,-1)--(0,1);
			\end{tikzpicture} \\
			Regular & Transversally & Transversally \\
			& elliptic & hyperbolic
		\end{tabular}
		
		\begin{tabular}{cccc}
			\begin{tikzpicture}
				\fill[green] (-1,-1)--(-1,1)--(1,1)--(1,-1);
				\axes
				\draw[line width=2,blue] (0,-1)--(0,1);
				\draw[line width=2,blue] (-1,0)--(1,0);
				\fill[red] (0,0) circle (0.05);
			\end{tikzpicture} &
			\begin{tikzpicture}
				\fill[green] (-1,-1)--(-1,1)--(1,1)--(1,-1);
				\axes
				\draw[line width=2,blue] (0,-1)--(0,1);
				\draw[line width=2,blue] (-1,0)--(1,0);
				\fill[red] (0,0) circle (0.05);
			\end{tikzpicture} &
			\begin{tikzpicture}
				\fill[green] (-1,-1)--(-1,1)--(1,1)--(1,-1);
				\axes
				\draw[line width=2,blue] (0,-1)--(0,1);
				\draw[line width=2,blue] (-1,0)--(1,0);
				\fill[red] (0,0) circle (0.05);
			\end{tikzpicture} &
			\begin{tikzpicture}
				\fill[green] (-1,-1)--(-1,1)--(1,1)--(1,-1);
				\axes
				\draw[line width=2,blue] (-1,-1)--(1,1);
				\draw[line width=2,blue] (-1,1)--(1,-1);
				\fill[red] (0,0) circle (0.05);
			\end{tikzpicture} \\
			Elliptic- & Elliptic- & Hyperbolic- & Focus-focus \\
			elliptic & hyperbolic & hyperbolic & 
		\end{tabular}
	\end{center}
	\caption{Images of the same normal forms for $p\equiv 1\mod 4$. The slopes of the blue lines in the last image are $\ii$ and $-\ii$. Note that the images of the systems with the same rank coincide, except perhaps for a coordinate change, because now the singular components of the systems belong to the same class: part (1) of Theorem \ref{thm:integrable}, with $c_1=c_2=1$.}
	\label{fig:images2}
\end{figure}

We could define a $p$-adic version of the ``Williamson type'' defined for the real case at \cite[p. 41]{BolFom}. In the real case, it consists of a tuple of integers $(k_\e,k_\h,k_\f)$ that count the number of elliptic, hyperbolic and focus-focus components of the normal form. The problem with this approach is that the components of the normal forms are associated to blocks in the normal forms of matrices, which in the real case take only three possible forms. In the $p$-adic case, by Theorem \ref{thm:num-forms}, there can appear countably many different blocks, so the Williamson type will be a sequence (with a finite number of elements different from zero) instead of a tuple.

For example, for $p=2$, the first $11$ elements of the sequence count the number of components with each possible $c$ in Corollary \ref{cor:williamson} (associated to blocks of size two), the next $145$ elements count the number of pairs of components with each possible form in parts (2) and (3) of Theorem \ref{thm:integrable} (associated to blocks of size four), after which come the counts of trios of components associated to blocks of size six, and so on.

We close this section with a mention of the Eliasson-Vey's linearization theorem \cite{Eliasson-thesis,Eliasson,Russmann,Vey,VuNWac}, which in the real case states that any \emph{smooth} integrable system can be brought to its Williamson normal form by a symplectomorphism. The \emph{analytic} case of this theorem is due to R\"u\ss mann \cite{Russmann} for two degrees of freedom and Vey \cite{Vey} in arbitrary dimension. In the real case Eliasson's Theorem (assuming that there are no hyperbolic components) says that there is a local diffeomorphism $\varphi$ and symplectic coordinates $\phi^{-1}=(x,\xi,y,\eta)$ such that $F\circ\phi=\varphi(g_1,g_2)$, where $g_i$ is one of the elliptic, real or focus-focus models. The $p$-adic equivalent of this theorem is well beyond the scope of this paper, and we state it as a question.

\begin{question}[A $p$-adic Eliasson-Vey's theorem?]\label{question:eliasson}
	Let $n$ be a positive integer. \letpprime. Given a $2n$-dimensional $p$-adic analytic symplectic manifold $(M,\omega)$, an integrable system $F:(M,\omega)\to (\Qp)^n$ and a non-degenerate critical point $m$ of $F$, determine under which conditions on the Williamson type of the critical point $m$ there are open sets $U\subset M$ and $V\subset (\Qp)^{2n}$, a $p$-adic analytic symplectomorphism $\phi:V\to U$ and a local diffeomorphism $\varphi$ of $(\Qp)^n$ such that $\phi(0)=m$ and
	\[(F-F(m))\circ \phi=\varphi\circ(g_1,\ldots,g_n),\]
	where $(g_1,\ldots,g_n)$ is the Williamson normal form of $F$ in $m$. (In the real case it is enough that there are no hyperbolic blocks.)
\end{question}

\section{Application to classical mechanical systems}\label{sec:JC}

In this section we explain how Theorem \ref{thm:integrable} can be applied to further study the $p$-adic Jaynes-Cummings model introduced and studied in \cite{CrePel-JC}. We recommend the books by Abraham-Marsden \cite{AbMa}, de León-Rodrigues \cite{LeRo} and Marsden-Ratiu \cite{MarRat} for an introduction to the mathematical study of mechanics and its connections to symplectic/differential geometry.

In our paper \cite{CrePel-JC} we studied the Jaynes-Cummings model with $p$-adic coefficients (see Figure \ref{fig:real-JC-image} for the fibers in the real case and Figure \ref{fig:padic-JC-image} for a fiber in the $p$-adic case). The system was defined therein, in analogy with the real case, as follows. For any number $p$, first we consider the product $p$-adic analytic manifold $\mathrm{S}_p^2\times(\Qp)^2$ with the $p$-adic symplectic form $\omega_{\sphere}+\dd u\wedge\dd v$. Here we recall that
\[\sphere=\Big\{(x,y,z)\in\Qp^2:x^2+y^2+z^2=1\Big\}\]
and $\omega_{\sphere}$ is the area form in the sphere given by
\[\omega_{\sphere}=-\frac{1}{z}\dd x\wedge\dd y=\frac{1}{y}\dd x\wedge\dd z=-\frac{1}{x}\dd y\wedge\dd z.\]
The \emph{$p$-adic Jaynes-Cummings model} is given by the $p$-adic analytic map \[F=(J,H):\mathrm{S}_p^2\times(\Qp)^2\to(\Qp)^2,\] where
\[\left\{\begin{aligned}
	J(x,y,z,u,v) & = \frac{u^2+v^2}{2}+z; \\
	H(x,y,z,u,v) & = \frac{ux+vy}{2},
\end{aligned}\right.\]
where $(x,y,z)\in\mathrm{S}_p^2$ and $(u,v)\in(\Qp)^2$.

\begin{figure}
	\begin{tikzpicture}[scale=2]
		\node (im) at (0.8,0) {\includegraphics[width=8cm]{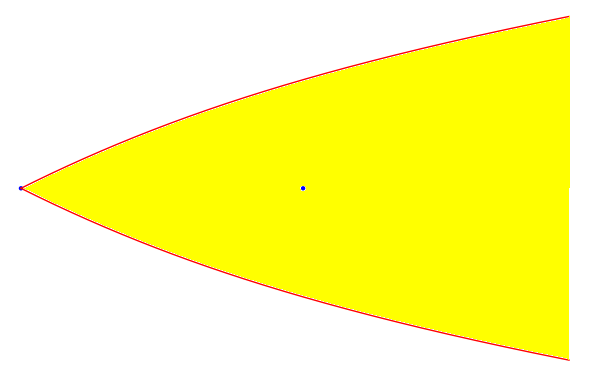}};
		\draw (-1.05,0)--(-1.5,0.7);
		\fill[blue] (-1.6,0.8) circle (0.02);
		\draw (-0.5,0.25) -- (-0.8,0.9);
		\draw[blue] (-0.8,1.5) ellipse (0.3 and 0.5);
		\draw (0.85,0) -- (0.85,1.5);
		\node (foco) at (0.85,3) {\includegraphics[height=6cm,trim=20cm 0 20cm 0,clip]{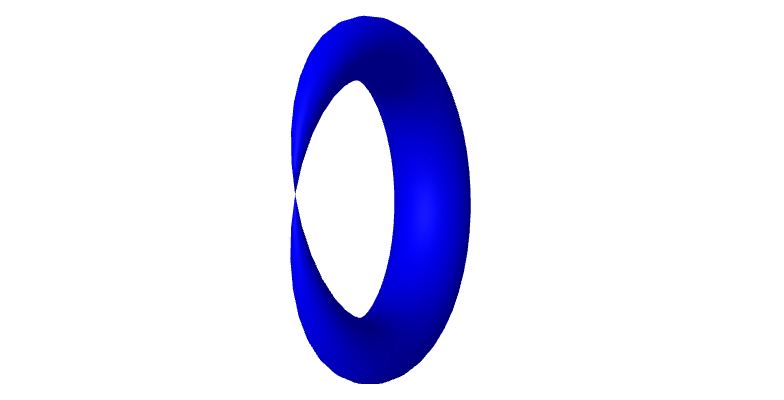}};
		\draw (2,0.5) -- (2.4,1.5);
		\node (toro) at (2.6,3) {\includegraphics[height=6cm,trim=19cm 0 19cm 0,clip]{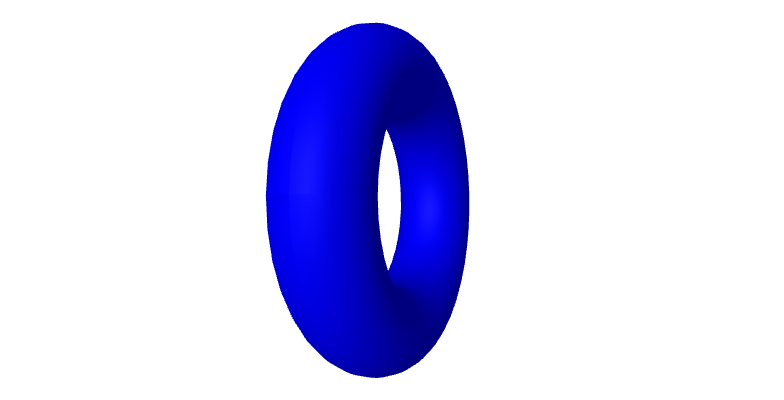}};
	\end{tikzpicture}
	\caption{Image and fibers of the real Jaynes-Cummings model. The red curve consists of rank $1$ critical points, and the two blue points are rank $0$. The Jaynes\--Cummings model is an example of a class of integrable systems called \emph{semitoric systems}. The fibers of this system are a point, circles, $2$-tori (generic fiber) and a pinched torus.}
	\label{fig:real-JC-image}
\end{figure}

\begin{figure}
	\includegraphics{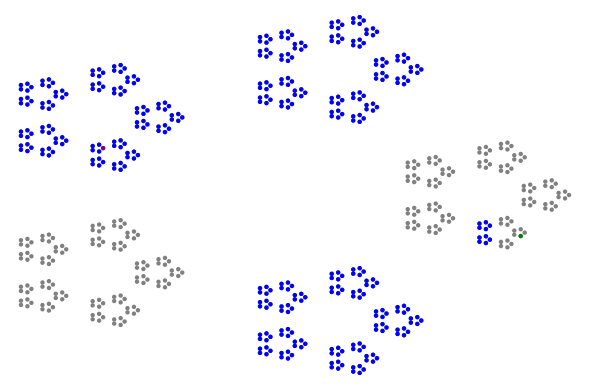}
	\caption{Fiber $F^{-1}(72,1)$ of the $5$-adic Jaynes-Cummings model. The blue points are values of $z$ for which the coordinates $(x,y,u,v)$ form two $p$-adic circles, at the green points they only form one circle, at the purple point $z=j=72$ they have dimension $1$ but they are not a circle, and the values of $z$ which appear in grey are not in the fiber.}
	\label{fig:padic-JC-image}
\end{figure}

By \cite[Proposition 2.5]{CrePel-JC}, at $m_1=(0,0,-1,0,0)$, there is a $p$-adic local symplectomorphism changing the local coordinates to $(x,\xi,y,\eta)$ in which the $p$-adic symplectic form is given by $\omega=(\dd x\wedge\dd\xi+\dd y\wedge\dd\eta)/2$ and
\begin{equation}
	F_1(x,\xi,y,\eta)=\frac{1}{2}(x^2+\xi^2,y^2+\eta^2)+\ocal((x,\xi,y,\eta)^3).\label{eq:JC-elliptic}
\end{equation}
Here $F_1=B\circ(F-F(0,0,-1,0,0))$ with
\[B=\begin{pmatrix}
	1 & 2 \\
	1 & -2
\end{pmatrix}.\]
At $m_2=(0,0,1,0,0)$, after the change, the $p$-adic symplectic form is also given by $\omega=(\dd x\wedge\dd\xi+\dd y\wedge\dd\eta)/2$ and
\begin{equation}
	F_2(x,\xi,y,\eta)=(x\eta-y\xi,x\xi+y\eta)+\ocal((x,\xi,y,\eta)^3).\label{eq:JC-focus}
\end{equation}
Here $F_2=B\circ(F-F(0,0,1,0,0))$ with
\[B=\begin{pmatrix}
	2 & 0 \\
	0 & 4
\end{pmatrix}.\]

Hence we have the following consequence of Theorem \ref{thm:integrable}, whose proof can be found in \cite[Section 8]{CrePel-matrix}.

\begin{corollary}\label{cor:JC}
	\letpprime. Then there exist open sets $U_1$ and $U_2$ such that $m_1\in U_1$ and $m_2\in U_2$ and a local symplectomorphism $\phi:U_1\to U_2$ centered at $m_1$, such that \[F_2(\phi(x,\xi,y,\eta))=F_1(x,\xi,y,\eta)+\ocal((x,\xi,y,\eta)^3)\] for $(x,\xi,y,\eta)\in U_1$, if and only if $p\equiv 1\mod 4$, where $F_1$ and $F_2$ are as described in \eqref{eq:JC-elliptic} and \eqref{eq:JC-focus}.
\end{corollary}

More information about the Jaynes-Cummings model and other models of interest in physics can be found at \cite{AbMa,LeRo}.

\section{Examples}\label{sec:examples}

In this section we show examples which illustrate our theorems, so that they can be understood more concretely. Although Theorems \ref{thm:integrable}, \ref{thm:num-integrable} and \ref{thm:num-integrable2} completely determine the classification of critical points of integrable systems, in order to find the explicit class of a given system, we also need the methods used to prove the matrix classification in part II. Hence the resolutions of these examples can be found in \cite[Section 9]{CrePel-matrix}.

\begin{example}
	Consider the following functions with a critical point in the origin (the first one is non-degenerate and the second one is degenerate):
	\[f_1(x,\xi,y,\eta)=\frac{x^2}{2}+2x\xi+3xy+4x\eta+\frac{5}{2}\xi^2+6\xi y+7\xi\eta+4y^2+9y\eta+5\eta^2\]
	\[f_2(x,\xi,y,\eta)=x^2+6x\xi-2xy-3x\eta+\frac{11}{2}\xi^2+\xi y-5\xi\eta-3y^2-2y\eta+3\eta^2\]
	
	The class of the critical point of $f_1$ is that of Corollary \ref{cor:williamson4}(3), with $c=3,t_1=t_2=-2,a=-1,b=0$, for $p=2$, that of part (2) with $c=-1$ for $p=3$, and that of part (1) with $c_1=1$ and $c_2=2$ for $p=5$.
	
	The class of the critical point of $f_2$ is that of Corollary \ref{cor:williamson4-deg}(3) with $c=5$ for $p=5$, and that of part (2) for $p=11$.
\end{example}

\begin{example}
	Let $p$ be a prime number such that $p\equiv 1 \mod 4$. Then the functions
	\[F_{\mathrm{ee}},F_{\mathrm{eh}},F_{\mathrm{hh}},F_{\mathrm{ff}}:((\Qp)^4,\omega_0)\to(\Qp)^2\]
	and the symplectic form $\omega_0=\dd x\wedge\dd\xi+\dd y\wedge\dd\eta$ on $\Qp^4$:
	\begin{itemize}
		\item Elliptic-elliptic: $F_{\mathrm{ee}}(x,\xi,y,\eta)=(\frac{x^2+\xi^2}{2},\frac{y^2+\eta^2}{2})$;
		\item Elliptic-hyperbolic: $F_{\mathrm{eh}}(x,\xi,y,\eta)=(\frac{x^2+\xi^2}{2},y\eta)$;
		\item Hyperbolic-hyperbolic: $F_{\mathrm{hh}}(x,\xi,y,\eta)=(x\xi,y\eta)$;
		\item Focus-focus: $F_{\mathrm{ff}}(x,\xi,y,\eta)=(x\eta-y\xi,x\xi+y\eta)$,
	\end{itemize}
	are non-degenerate $p$-adic analytic integrable systems. Furthermore, all four systems are $p$-adically locally symplectomorphic. This follows from Theorem \ref{thm:integrable}.
\end{example}

\begin{example}
	\letpprime\ such that $p \not \equiv 1 \mod 4$. Then any two distinct systems among those four are not locally symplectomorphic. This follows from Theorem \ref{thm:integrable}.
\end{example}

\begin{remark}
	In Corollary \ref{cor:williamson}, the elliptic function corresponds to $c=1$. The hyperbolic one corresponds to $c=1$ for $p\equiv 1\mod 4$ and $c=-1$ otherwise. That is, six of the seven forms for $p\equiv 1\mod 4$, three of the five for $p\equiv 3\mod 4$, and nine of the eleven for $p=2$, have no real equivalent.
	
	In Theorem \ref{thm:integrable}, the elliptic-elliptic model corresponds to (1) with $c_1=c_2=1$. Changing elliptic components to hyperbolic results in changing the corresponding $c_i$ to $-1$, except if $p\equiv 1\mod 4$, where there is no change. The focus-focus model is the same for $p\equiv 1\mod 4$, and otherwise it is (2) with $c=-1$. \emph{The vast majority of $p$-adic normal forms, including all those in point (3), have no real equivalent.}
\end{remark}

We now comment on the $\Circle$-actions on $\Qp^2$ induced by those systems (we refer to \cite[Appendix C]{CrePel-JC} for a review of the concept of $p$-adic action), where we recall that $\Circle$ is defined by
\[\Circle=\Big\{(x,y)\in\Qp^2:x^2+y^2=1\Big\}.\]

\begin{remark}\label{rem:actions}
	\letpprime.
	\begin{enumerate}
		\item The elliptic function $f_{\mathrm{e}}(x,\xi)=\frac{x^2+\xi^2}{2}$ induces on $\Qp^2$ an $\Circle$-action given by
		\[(u,v)\cdot(x,\xi)=\begin{pmatrix}
			u & v \\
			-v & u
		\end{pmatrix}
		\begin{pmatrix}
			x \\ \xi
		\end{pmatrix},\]
		for $(u,v)\in\Circle$ and $(x,\xi)\in\Qp^2$.
		\item The first component $f_1(x,\xi,y,\eta)=x\eta-y\xi$ of the focus-focus system $F_\mathrm{ff}$ induces on $\Qp^4$ an action with a similar formula to the previous one, simultaneously on the plane $(x,y)$ and the plane $(\xi,\eta)$:
		\[(u,v)\cdot(x,\xi,y,\eta)=\begin{pmatrix}
			u & 0 & -v & 0 \\
			0 & u & 0 & -v \\
			v & 0 & u & 0 \\
			0 & v & 0 & u
		\end{pmatrix}
		\begin{pmatrix}
			x \\ \xi \\ y \\ \eta
		\end{pmatrix},\]
		for $(u,v)\in\Circle$ and $(x,\xi,y,\eta)\in\Qp^4$.
	\end{enumerate}
	
	Indeed, by \cite[Corollary 4.5]{CrePel-JC}, there is a subgroup of $\Circle$ isomorphic to $p\Zp$ by the correspondence $t\mapsto(\cos t,\sin t)$ that contains all elements near the origin. We need to prove that, if $\psi_t(x,\xi)=(\cos t,\sin t)\cdot(x,\xi)$, the vector field $X_t$ of this flow (in the sense that $\frac{\dd}{\dd t}\psi_t(x,\xi)=X_t(\psi_t(x,\xi))$) satisfies Hamilton's equations $\imath_{X_t}\omega_0=\dd f_{\mathrm{e}}$.
	
	We have
	\[\frac{\dd}{\dd t}\psi_t(x,\xi)=\frac{\dd}{\dd t}\begin{pmatrix}
		\cos t & \sin t \\
		-\sin t & \cos t
	\end{pmatrix}
	\begin{pmatrix}
		x \\ \xi
	\end{pmatrix}=
	\begin{pmatrix}
		-\sin t & \cos t \\
		-\cos t & -\sin t
	\end{pmatrix}
	\begin{pmatrix}
		x \\ \xi
	\end{pmatrix}\]
	so $X_t(x,\xi)=(\xi,-x)$, and \[\imath_{X_t}\omega_0=x\dd x+\xi\dd\xi=\dd f_{\mathrm{e}},\] as we wanted.
	
	For the focus-focus action, we have analogously that $X_t(x,\xi,y,\eta)=(-y,-\eta,x,\xi)$, and \[\imath_{X_t}\omega_0=\eta\dd x-y\dd\xi-\xi\dd y+x\dd\eta=\dd f_1,\] as we wanted.
\end{remark}

\section{Circular symmetries of the $p$-adic models}\label{sec:circle}

Here we generalize the content of Remark \ref{rem:actions} and analyze the problem of existence of circle actions for arbitrary models. In the real case, for a fixed symplectic space $(V,\omega)$, most multiples of a Hamiltonian that admits a circle action do not admit it. This happens because a smooth circle action over $V$ is defined by a smooth map $h:\mathrm{S}^1\times V\to V$, satisfying $h(g_1,h(g_2,m))=h(g_1g_2,m)$ and $h(1,m)=m$, for $g_1,g_2\in \mathrm{S}^1$ and $m\in V$. Concretely, considering $g:\R\to\mathrm{S}^1$ given by $g(t)=(\cos t,\sin t)$, we must have
\[h(g(t+t'),m)=h(g(t)g(t'),m)=h(g(t),h(g(t'),m))\]
and $h(1,m)=m$, for $t,t'\in\R$ and $m\in V$. The induced vector field for this action, that is, $X_h(t)$ given by
\[\frac{\dd}{\dd t}h(g(t),m)=X_h(t)(h(g(t),m)),\]
must coincide with $X_f$ induced by the Hamiltonian $f$ via the Hamilton equation $\imath_{X_f}\omega_0=\dd f$. When $f$ is multiplied by a constant $k$, that is, $f'=kf$, we also have $X_{f'}=kX_f$, so $X_h$ must also be multiplied by $k$:
\begin{align*}
	\frac{\dd}{\dd t}h'(g(t),m) & =X_{h'}(t)(h'(g(t),m)) \\
	& =kX_{h}(t)(h'(g(t),m)) \\
	& =kX_{h}(kt)(h'(g(t),m))
\end{align*}
(the last equality happens because $X_h(t)=X_f$ is independent of $t$). This equation is solved by $h'(g(t),m)=h(g(kt),m)$, that is, the action is accelerated by a factor of $k$. As we must have $h'(g(2\pi),m)=h'(1,m)=m$, in general this is only possible if $k$ is integer.

This will not happen in the $p$-adic case, because the $p$-adic circle is not closed (there is no $t\ne 0$ such that $g(t)=1$). So getting a circle action is much easier in this case, and actually any ``small enough'' multiple of a Hamiltonian admits a circle action.

\begin{proposition}\label{prop:circle-actions}
	Let $n$ be a positive integer and let $p$ be a prime number. Let $\Omega_0$ be the matrix of the standard symplectic form on $(\Qp)^{2n}$. Given a $p$-adic analytic Hamiltonian $f:(\Qp)^{2n}\to \Qp$ such that $f(m)=m^TMm/2$, for a matrix $M\in\M_{2n}(\Qp)$, $f$ admits a $p$-adic analytic $\Circle$-action (that is, there exists $h:\Circle\times(\Qp)^{2n}\to(\Qp)^{2n}$ analytic such that $h(g_1,h(g_2,m))=h(g_1g_2,m)$ and $h(1,m)=m$, for $g_1,g_2\in \Circle$ and $m\in (\Qp)^{2n}$) if and only if $\ord(\lambda)\ge 0$ for all $\lambda$ which is an eigenvalue of $\Omega_0^{-1}M$.
\end{proposition}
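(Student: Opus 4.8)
The plan is to translate the existence of a $\Circle$-action into the extension of the Hamiltonian flow of $f$ to a linear representation of $\Circle$, and then to settle that extension problem using the structure of $\Circle$ as a one-dimensional $p$-adic Lie group. Throughout, by a ``$\Circle$-action generated by $f$'' I mean an analytic action $h$ whose infinitesimal generator is the Hamiltonian vector field $X_f$, as in Remark \ref{rem:actions}; this is the sense in which the statement is to be read. First I would record that, for the quadratic $f(m)=m^{T}Mm/2$, Hamilton's equation $\imath_{X_f}\omega_0=\dd f$ gives $X_f(m)=\Omega_0^{-1}Mm=Am$ with $A:=\Omega_0^{-1}M$, so $X_f$ is linear and the flow, where the series converges, is $t\mapsto\exp(tA)$; the origin is the unique fixed point. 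If $h$ is an analytic $\Circle$-action generated by $f$, then $h(g,0)=0$ for every $g$ in the pro-$p$ subgroup $g(p\Zp)$, and differentiating the action identity at $m=0$ produces an analytic homomorphism $\rho\colon\Circle\to\mathrm{GL}_{2n}(\Qp)$ with $\rho(g(t))=\exp(tA)$ for $t$ near $0$; conversely, any such $\rho$ yields the action $h(g,m)=\rho(g)m$. So the proposition reduces to the assertion that the germ $t\mapsto\exp(tA)$ extends to an analytic homomorphism $\Circle\to\mathrm{GL}_{2n}(\Qp)$ if and only if every eigenvalue of $A$ has non-negative valuation.

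Next I would use the structure of $\Circle$. By \cite[Corollary 4.5]{CrePel} the map $g(t)=(\cos t,\sin t)$ restricts (for $p$ odd, with the evident modification when $p=2$) to an analytic isomorphism between $p\Zp$ and an open, compact, pro-$p$ subgroup $\Circle_1:=g(p\Zp)$; moreover $\Circle$ is abelian, with $\Circle/\Circle_1$ finite of order prime to $p$ when $p\not\equiv 1\bmod 4$, and $\Circle\cong\Qp^{*}$ when $p\equiv 1\bmod 4$. Extending $\rho$ therefore breaks into (i) extending the one-parameter germ to an analytic homomorphism $\Circle_1\cong\Zp\to\mathrm{GL}_{2n}(\Qp)$, and (ii) extending from $\Circle_1$ to all of $\Circle$. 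Step (ii) is routine given (i): since $\Circle$ is abelian, for $p\not\equiv 1\bmod 4$ one lifts a generator of the finite cyclic quotient and assigns it the root $\exp((s/d)A)$ of the relevant $\exp(sA)$, which converges because $d$ is prime to $p$; for $p\equiv 1\bmod 4$ one extends $\rho$ by the identity on the torsion and valuation factors of $\Qp^{*}$. One then checks the result is an analytic homomorphism, is symplectic because each $\exp(\cdot A)$ is (as $A$ lies in the symplectic Lie algebra), and still has $X_f$ as its generator.

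The heart is step (i). For sufficiency: if every eigenvalue of $A$ has non-negative valuation, write the Jordan--Chevalley decomposition $A=S+N$ over $\Qp$, with $S$ semisimple with those eigenvalues, $N$ nilpotent, $SN=NS$; then $S$ is $\Qp$-conjugate, by decomposing into $S$-irreducible subspaces and invoking Gauss's lemma, to a block-diagonal matrix of integral companion matrices, while $\exp(tN)$ is polynomial in $t$, so after a fixed conjugation $\exp(tA)=\exp(tS)\exp(tN)$ converges for all $t\in p\Zp$ and defines the desired analytic homomorphism on $\Circle_1$. For necessity: if $\rho$ exists, $\rho(\Circle_1)$ is compact, hence conjugate into $\mathrm{GL}_{2n}(\Zp)$ and, on a suitable open subgroup, into the uniform pro-$p$ group $1+p\M_{2n}(\Zp)$; applying $\log$ identifies the restricted homomorphism with $s\mapsto\exp(sC)$ for an integral matrix $C$ proportional to $A$, after which I would combine this with the obstruction to taking iterated $p$-th roots in $\mathrm{GL}_{2n}(\Qp)$ of matrices congruent to the identity only to first order, so as to exclude eigenvalues of $A$ of negative valuation.

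I expect the main obstacle to be precisely this last arithmetic step in the necessity direction — ruling out eigenvalues of negative valuation by a uniform root-extraction and ramification argument, together with the adjustments needed when $p=2$, where both the local structure of $\Circle$ and the convergence radii of $\exp$ and $\log$ change. The remaining ingredients — the passage from analytic actions to linear representations, the descent to the compact pro-$p$ subgroup, and the extension across $\Circle/\Circle_1$ — are essentially bookkeeping with $p$-adic Lie groups, built on the structure of $\Circle$ recalled above and on the normal-form results of the paper.
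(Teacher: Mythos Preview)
Your proposal is considerably more elaborate than the paper's argument, and the machinery you invoke is not needed; the obstacle you flag in the necessity direction dissolves once you look at it the way the paper does.

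The paper's proof is essentially two lines. Since $X_f(m)=-Am$ with $A=\Omega_0^{-1}M$, the flow through $g(t)=(\cos t,\sin t)$ must satisfy $\psi(t,m)=\exp(-tA)m$. The domain of $g$ is $\{t:|t|_p\le k_p\}$ with $k_p=1/p$ (or $1/4$ for $p=2$), so the action exists on $\im(g)$ precisely when $\exp(-tA)$ converges for all such $t$; this holds iff every eigenvalue $\lambda$ of $A$ has $|t\lambda|_p\le k_p$ whenever $|t|_p\le k_p$, i.e. iff $|\lambda|_p\le 1$. That is both directions at once: you do not need Jordan--Chevalley for sufficiency, nor compactness of $\rho(\Circle_1)$ and root-extraction in $\mathrm{GL}_{2n}$ for necessity. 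The ``arithmetic step'' you anticipate as the main obstacle is exactly this convergence criterion for the matrix exponential, and it is immediate.

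For the extension from $\im(g)$ to all of $\Circle$, the paper is again much blunter than you are: since $\Circle/\im(g)$ is discrete, one simply defines the action \emph{arbitrarily} on coset representatives --- the flow equation only constrains the action near the identity, so there is nothing to check. Your careful case split on $p\bmod 4$, lifting generators and taking $d$-th roots of $\exp(sA)$, produces a specific extension but is unnecessary for the statement as written. In short, your route would likely work once the gap is closed, but the paper's argument is a direct convergence check with no structural input beyond the description of $\im(g)$ from \cite[Corollary~4.5]{CrePel}.
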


\begin{proof}
	Suppose that $f$ admits a circle action $h$ and let $\psi(t,v)=h(g(t),v)$ and $A=\Omega_0^{-1}M$. Hamilton's equation $\imath_{X_f}\omega_0=\dd f$ results in
	\[X_f(m)^T\Omega_0 v=\dd f(m)(v)=m^TMv\Rightarrow X_f(m)=-\Omega_0^{-1}Mm=-Am.\]
	Substituting in the flow equation,
	\[\frac{\dd}{\dd t}\psi(t,m)=-A\psi(t,m)\]
	and $\psi(0,m)=m$, which solves as
	\[\psi(t,m)=\exp(-tA)m\]
	where $\exp$ denotes the matrix exponential. This must exist for all $t$ in the domain of $g(t)=(\cos t,\sin t)$, that is, such that $|t|_p\le k_p$, where $k_p=1/p$ for $p\ne 2$ and $k_2=1/4$. The exponential of $-tA$ exists if and only if the eigenvalues $\mu$ of $-tA$ satisfy $|\mu|_p\le k_p$, which implies \[|-t\lambda|_p=|t|_p|\lambda|_p\le k_p.\] As it must exist when $|t|_p=k_p$, we have $|\lambda|_p\le 1$.
	
	Conversely, suppose that $|\lambda|_p\le 1$ for all eigenvalues of $A$. Then, for all $t$ in the domain of $g$, \[|-t\lambda|_p=|t|_p|\lambda|_p\le k_p\] and $\exp(-tA)$ exists. This means that $\psi(t,m)$ is well defined, and $h(g_0,m)$ exists for $g_0\in\im(g)$. By \cite[Corollary 4.5]{CrePel-JC}, the quotient $\Circle/\im(g)$ is a discrete group, so we can define its action arbitrarily without affecting the flow equation, and we have an action of $\Circle$.
\end{proof}

\begin{remark}
	Often symplectic classifications of group actions in equivariant symplectic geometry include the assumption that the action is effective, as it is the case for example in Delzant's classification, Duistermaat-Pelayo \cite{DuiPel-symplectic} and Pelayo \cite{Pelayo-symplectic}. With this restriction (being effective), no multiple of a Hamiltonian which admits a circle action also admits an action. However, as Proposition \ref{prop:circle-actions} shows, in the $p$-adic case all small multiples of any Hamiltonian admit an effective action.
\end{remark}

However, if we want the action to have the form $h(g,m)=h((u,v),m)=(uI+vB)m$, like the actions in Remark \ref{rem:actions}, the situation changes completely.

\begin{proposition}
	Let $n$ be a positive integer and let $p$ be a prime number. Let $\Omega_0$ be the matrix of the standard symplectic form on $(\Qp)^{2n}$. Given a $p$-adic analytic Hamiltonian $f:(\Qp)^{2n}\to \Qp$ such that $f(m)=m^TMm/2$, for a matrix $M\in\M_{2n}(\Qp)$, $f$ admits a $p$-adic analytic $\Circle$-action of the form $h((u,v),m)=(uI+vB)m$ if and only if the eigenvalues of $\Omega_0^{-1}M$ are $\ii$ and $-\ii$, both with multiplicity $n$. In that case, $B=-\Omega_0^{-1}M$.
\end{proposition}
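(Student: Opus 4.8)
The plan is to reduce everything to the single algebraic identity $B^2=-I$, exactly as in the proof of Proposition~\ref{prop:circle-actions}, and then to read off the constraint on the eigenvalues of $A:=\Omega_0^{-1}M$ from the relation $B=-A$.

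First I would carry out the forward implication. Recall that $\Circle$ carries the multiplication $(u_1,v_1)\cdot(u_2,v_2)=(u_1u_2-v_1v_2,\,u_1v_2+u_2v_1)$ with identity $(1,0)$, and that, by \cite[Corollary 4.5]{CrePel}, the map $t\mapsto g(t)=(\cos t,\sin t)$ parametrizes a neighbourhood of the identity of $\Circle$. Suppose $h((u,v),m)=(uI+vB)m$ is such an action. Substituting into $h(g_1,h(g_2,m))=h(g_1g_2,m)$ gives
\[(u_1I+v_1B)(u_2I+v_2B)=(u_1u_2-v_1v_2)I+(u_1v_2+u_2v_1)B,\]
and expanding the left-hand side and cancelling the terms that already agree yields $v_1v_2(B^2+I)=0$ for all $(u_i,v_i)\in\Circle$; choosing the $v_i$ nonzero forces $B^2=-I$. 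Using this, the curve $\psi(t,m)=h(g(t),m)=(\cos t\,I+\sin t\,B)m$ satisfies $\tfrac{\dd}{\dd t}\psi(t,m)=(-\sin t\,I+\cos t\,B)m=B\psi(t,m)$, so the infinitesimal generator of $h$ is the linear vector field $m\mapsto Bm$. Since $h$ is the $\Circle$-action attached to the Hamiltonian $f$, this generator must coincide with $X_f$, and Hamilton's equation $\imath_{X_f}\omega_0=\dd f$ together with the computation in the proof of Proposition~\ref{prop:circle-actions} gives $X_f(m)=-\Omega_0^{-1}Mm$. Hence $B=-\Omega_0^{-1}M=-A$ and $A^2=-I$. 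Therefore the minimal polynomial of $A$ divides $t^2+1=(t-\ii)(t+\ii)$, so $A$ is diagonalizable over $\Qp[\ii]$ with all eigenvalues in $\{\ii,-\ii\}$; and since $M$ is symmetric and $\Omega_0$ antisymmetric, the characteristic polynomial of $A$ is even (as in the first display in the proof of Lemma~\ref{lemma:eig}, $\det(\lambda I-A)=\det(\lambda I+A)$), so $\ii$ and $-\ii$ occur with the same multiplicity, necessarily $n$ each.

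For the converse I would run the argument backwards. Assume the eigenvalues of $A=\Omega_0^{-1}M$ are $\ii$ and $-\ii$, each with multiplicity $n$; equivalently the minimal polynomial of $A$ is $t^2+1$, so $A^2=-I$. Put $B:=-A$, so that $B^2=-I$, and define $h((u,v),m):=(uI+vB)m$. Then $h((1,0),m)=m$, the map is analytic (polynomial in $(u,v)$ and linear in $m$), the homomorphism property is the displayed identity of the previous paragraph read in reverse (it uses only $B^2=-I$ and the multiplication law of $\Circle$), and the infinitesimal generator of $h$ is $m\mapsto Bm=-Am=X_f$ by the same flow computation as above. Thus $f$ admits an $\Circle$-action of the required form, and necessarily $B=-\Omega_0^{-1}M$.

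The only genuinely delicate point is the bookkeeping that turns ``an action of the form $(uI+vB)m$'' into the precise value $B=-\Omega_0^{-1}M$: one must pass from the algebraic shape of $h$ to its infinitesimal generator, identify that generator with $X_f$ via Hamilton's equation, and for this reuse \cite[Corollary 4.5]{CrePel} to know that $g(t)=(\cos t,\sin t)$ sweeps out enough of $\Circle$ for the generator to be determined. Everything else — the derivation of $B^2=-I$, the evenness of the characteristic polynomial, and the equal splitting of the multiplicities — is a short computation once $A^2=-I$ is in hand.
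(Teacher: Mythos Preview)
Your argument is correct and essentially the same as the paper's. The only cosmetic difference is the order of operations: you first extract $B^2=-I$ from the group law of $\Circle$ and then identify $B=-A$ via the Hamiltonian generator, whereas the paper substitutes $(\cos t\,I+\sin t\,B)m$ directly into the flow equation $\tfrac{\dd}{\dd t}\psi=-A\psi$ and reads off $B=-A$ and $AB=I$ (hence $A^2=-I$) in one step from the $\cos t$ and $\sin t$ coefficients.
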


\begin{proof}
	The flow equation implies
	\[\frac{\dd}{\dd t}h((\cos t,\sin t),m)=-Ah((\cos t,\sin t),m),\]
	that is
	\[\frac{\dd}{\dd t}(\cos tI+\sin tB)m=(-\sin tI+\cos tB)m=-A(\cos tI+\sin tB)m,\]
	which implies $AB=I$ and $B=-A$. Hence, the action exists if and only if $A^2=-I$. If this happens, the only possible eigenvalues are $\ii$ and $-\ii$, and since they must come in opposite pairs, each one appears $n$ times. Conversely, if the eigenvalues of $A$ are $\ii$ and $-\ii$, those of $A^2$ are $-1$, which implies that $A^2=-I$.
\end{proof}

Of the normal forms in Theorem \ref{thm:integrable}, those with a circle action of this form are the following:
\begin{itemize}
	\item At point (1), only the elliptic component gives eigenvalues $\ii$ and $-\ii$. The remaining ones have different eigenvalues. Hence, we recover Remark \ref{rem:actions}(1).
	\item At point (2), only the focus-focus component, if $p\not\equiv 1\mod 4$, has eigenvalues $(\ii,\ii,-\ii,-\ii)$. We recover Remark \ref{rem:actions}(2).
	\item At point (3), $t_1+t_2\sqrt{c}$ is the square of an eigenvalue, so we must have $t_1=-1$ and $t_2=0$ to have this kind of circle action.
\end{itemize}

\section{Final remarks}\label{sec:final}

\begin{remark}
	Pelayo-Voevodsky-Warren \cite[Definition 7.1]{PVW} defined $p$-adic integrable systems in a different way: their second condition was that the set where the $n$ differential $1$-forms $\dd f_1,\ldots,\dd f_n$ are linearly dependent has $p$-adic measure zero. In view of some examples, including the Jaynes-Cummings model treated in our previous paper \cite{CrePel-JC}, and which have come to our attention since \cite{PVW} was written, we believe it is convenient to replace that condition, which is too restrictive in the $p$-adic case (unlike in the real case, which is the usual formulation one gives) by the condition that the set where the $n$ differential $1$-forms are linearly independent is dense in $M$. We use this updated definition throughout the paper.
\end{remark}

\begin{remark}
	The choice of $c_0$ as the least quadratic residue in Definition \ref{def:sets} is not essential, in the sense that the normal forms that appear in Theorem \ref{thm:integrable} are ``equivalent'' (in the precise sense that the resulting set of normal forms still contains a representative of each class modulo local symplectomorphism).
\end{remark}

\begin{remark}
	There are many more local models of $p$-adic integrable systems than real ones, so from a physical viewpoint they should be able to model physical or other phenomena beyond the applications with real coefficients, see for example \cite{GKPSW,HuaHov,KOL} and the references therein. For applications in biology see \cite{AveBik,DraMis}.
\end{remark}

\begin{remark}
	Identifying a Hessian with its quadratic form, the formula \eqref{eq:integrable} would be written $\dd^2 F=(g_1,g_2)$. In the expression for $g_1$ and $g_2$ in any of the cases above, if we change the values of the parameters, the resulting functions still form an integrable system, because the concrete values do not affect the Poisson bracket, but they are not a normal form. We can apply the theorem to these new functions, resulting in a new normal form $(g_1',g_2')$ locally symplectomorphic to $(g_1,g_2)$, in the same or a different case.
\end{remark}

\appendix
\section{Basic properties of $p$-adic numbers}\label{sec:padic}

\subsection{Critical points of $p$-adic analytic functions}\label{sec:critical}

A \textit{power series} in $(\Qp)^n$ is given by
$f(x)=\sum_{I\in\N^n}a_I(x-x_0)^I$
where $x^I$ means $x_1^{i_1}\ldots x_n^{i_n}$ and $a_I$ are coefficients in $\Qp$. A function $f:(\Qp)^n\to\Qp$ is \textit{($p$-adic) analytic} if it can be expressed as a power series. (Sometimes the term is used for functions that are expressed as a power series only piecewise, but for our purposes here this distinction does not matter because we are interested only in the local behavior of analytic functions.)

We define $\dd f(m)$ as the vector whose coordinates are the partial derivatives of $f$, defined in the usual sense:
\[\dd f(m)_i=\frac{\partial f(m)}{\partial x_i}:=\lim_{t\rightarrow 0}\frac{f(x+t\e_i)-f(x)}{t}\]
where the definition of limit is the usual one from metric spaces (the limit of $f$ at a point $x_0\in\Qp^n$ is equal to $y_0\in\Qp$ if for any $\epsilon>0$ there is $\delta>0$ such that $|f(x)-f(x_0)|_p<\epsilon$ whenever $|x_i-(x_0)_i|_p<\delta$ for $1\le i\le n$). This limit always exists for a power series, and it is equal to
\[\frac{\partial f}{\partial x_i}=\sum_{I\in\N^n}a_Ii_j(x-x_0)^{I_j},\]
where, for $I=(i_1,\ldots,i_n)$, $I_j$ is defined as $(i_1,\ldots,i_j-1,\ldots,i_n)$.

In the same way, we define the Hessian of $f$, which we denote as $\dd^2 f$, the matrix with the second derivatives of $f$ as entries:
\[(\dd^2 f)_{ij}=\frac{\partial^2 f}{\partial x_i\partial x_j}.\]

In general, if $M$ is a $p$-adic analytic manifold, there are charts $\phi$ from $U\subset(\Qp)^n$ to $V\subset M$ for some $n$, and we can define \textit{analytic function} in $V$ as a function $f:V\to\Qp$ such that $f\circ\phi$ is given by a power series, or more generally \textit{analytic function} in $M$ as $f:M\to\Qp$ given piecewise by power series. The differential of $f$ is defined as $\dd f=\dd(f\circ\phi)$, and its Hessian is $\dd^2 f=\dd^2(f\circ\phi)$. See Appendix B of \cite{CrePel-JC} for the precise definitions of $p$-adic manifolds and analytic functions.

A \textit{critical point} of $f:M\to\Qp$ is a point $m\in M$ such that $\dd f(m)=0$.

\subsection{Application of $p$-adic Darboux's Theorem}

It is relatively easy to prove that any two linear symplectic forms are linearly symplectomorphic:

\begin{theorem}\label{thm:omega0}
	Let $F$ be a field and $n$ be a positive integer. Every symplectic form on $F^{2n}$ is linearly symplectomorphic to the form $\omega_0$ which has as matrix
	\[\Omega_0=\begin{pmatrix}
		0 & 1 & & & & & \\
		-1 & 0 & & & & & \\
		& & 0 & 1 & & & \\
		& & -1 & 0 & & & \\
		& & & & \ddots & & \\
		& & & & & 0 & 1 \\
		& & & & & -1 & 0
	\end{pmatrix}.\]
	Hence, every two linear symplectic forms on $F^{2n}$ are actually symplectomorphic.
\end{theorem}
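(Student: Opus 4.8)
The plan is to deduce the theorem from the existence of a \emph{symplectic basis}: I will show that every symplectic vector space $(V,\omega)$ over $F$ of dimension $2n$ admits a basis $\{u_1,v_1,\ldots,u_n,v_n\}$ with $\omega(u_i,v_i)=1$ for all $i$ and $\omega(w_1,w_2)=0$ for every other pair of basis vectors. Writing an arbitrary symplectic form $\omega$ on $F^{2n}$ in the coordinates given by such a basis yields precisely the matrix $\Omega_0$, so the linear isomorphism sending the standard basis of $F^{2n}$ to $\{u_1,v_1,\ldots,u_n,v_n\}$ is the required linear symplectomorphism from $(F^{2n},\omega_0)$ onto $(F^{2n},\omega)$. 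The last sentence of the statement then follows because being linearly symplectomorphic is an equivalence relation (the inverse and composition of linear symplectomorphisms are again linear symplectomorphisms).

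The existence of a symplectic basis I would prove by induction on $n$. First note that the hypothesis $\omega(v,v)=0$ for all $v$ forces $\omega$ to be alternating, hence antisymmetric even in characteristic $2$: expanding $\omega(u+v,u+v)=0$ gives $\omega(u,v)+\omega(v,u)=0$. For $n=1$, pick any $u_1\neq 0$; non-degeneracy provides $w$ with $\omega(u_1,w)\neq 0$, and $v_1:=w/\omega(u_1,w)$ satisfies $\omega(u_1,v_1)=1$. The pair is linearly independent, for $v_1=cu_1$ would give $\omega(u_1,v_1)=c\,\omega(u_1,u_1)=0$; so $\{u_1,v_1\}$ is a symplectic basis of $V$.

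For the inductive step, with $\dim V=2n$ and $n\geq 2$, I choose $u_1,v_1$ as above, set $W=\Span\{u_1,v_1\}$ and $W^{\omega}=\{x\in V:\omega(x,w)=0\text{ for all }w\in W\}$. The key linear-algebra facts are: (i) $V=W\oplus W^{\omega}$; and (ii) $\omega$ restricts to a non-degenerate (hence symplectic) form on $W^{\omega}$. For (i), given $x\in V$ the vector $x'=x-\omega(x,v_1)u_1+\omega(x,u_1)v_1$ lies in $W^{\omega}$ by a direct check using $\omega(u_1,v_1)=1$ and antisymmetry, so $V=W+W^{\omega}$; and $W\cap W^{\omega}=0$ because $au_1+bv_1\in W^{\omega}$ forces $b=-\omega(au_1+bv_1,u_1)=0$ and $a=\omega(au_1+bv_1,v_1)=0$. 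For (ii), if some $x\in W^{\omega}$ were $\omega$-orthogonal to all of $W^{\omega}$, then it would be $\omega$-orthogonal to $W\oplus W^{\omega}=V$, contradicting non-degeneracy of $\omega$ unless $x=0$. Then $\dim W^{\omega}=2n-2$, and applying the inductive hypothesis to $(W^{\omega},\omega|_{W^{\omega}})$ produces $\{u_2,v_2,\ldots,u_n,v_n\}$, which together with $u_1,v_1$ is the desired symplectic basis of $V$.

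I do not expect a genuine obstacle here: this is the standard Gram--Schmidt-type reduction for symplectic forms. The only point that requires a moment's care is that the argument must be valid over an arbitrary field, including characteristic $2$; this is exactly why Definition \ref{def:symplectic} builds the alternating condition $\omega(v,v)=0$ into the definition rather than mere antisymmetry, and with that condition in hand the reduction goes through verbatim as over $\R$.
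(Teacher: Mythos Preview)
Your proof is correct and follows essentially the same approach as the paper: both argue by induction on $n$, picking $u_1,v_1$ with $\omega(u_1,v_1)=1$, passing to the symplectic complement $\langle u_1,v_1\rangle^{\omega}$, and applying the inductive hypothesis there. Your write-up is simply more detailed, in particular spelling out the direct-sum decomposition $V=W\oplus W^{\omega}$ and the non-degeneracy of the restriction, and carefully noting why the argument works in characteristic $2$.
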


\begin{proof}
	What we want to prove is that there is a basis with respect to which $\omega$ has the matrix $\Omega_0$. Then, the symplectomorphism that sends this basis to the canonical one will send $\omega$ to $\omega_0$.
	
	We go by induction on $n$. As $\omega$ is non-degenerate, there are $u_1$ and $v_1$ with $\omega(u_1,v_1)=1$. Of course, $\langle u_1,v_1\rangle$ is symplectic, so its complement $\langle u_1,v_1\rangle^\omega$ is also symplectic. Applying induction to this complement, we get a basis $\{u_2,v_2,\ldots,u_n,v_n\}$. Now, $\{u_1,v_1,\ldots,u_n,v_n\}$ is the basis we are looking for.
\end{proof}

In our recent paper \cite[Theorem B]{CrePel-Darboux}, we extended this result proving that it is always possible to choose local symplectic coordinates around a point in a $p$-adic analytic symplectic manifold, that is, we prove a $p$-adic analog of Darboux's Theorem in real symplectic geometry and use it to derive a strong global classification \cite[Theorem D]{CrePel-Darboux} which is a symplectic version of a result of Serre \cite[Th\'eor\`eme 1]{Serre}.

\begin{theorem}[$p$-adic analytic Darboux's Theorem {\cite[Theorem B]{CrePel-Darboux}}]\label{thm:darboux}
	Let $n$ be a positive integer. \letpprime. Let $(M,\omega)$ be a $2n$-dimensional $p$-adic analytic symplectic manifold and $m\in M$. There exist local $p$-adic analytic coordinates $(x_1,y_1,\ldots,x_n,y_n)$ around $m$ such that $\omega=\sum_{i=1}^n\dd x_i\wedge\dd y_i.$
\end{theorem}

This means that all the normal forms in the present paper can be extended to have the standard form for the symplectic form at a whole neighborhood of the critical point, instead of only at the critical point.

\section{The real Weierstrass-Williamson classification}\label{sec:real}

By the work of Weierstrass \cite{Weierstrass} any real symmetric positive definite matrix is diagonalizable by a symplectic matrix. This was generalized in an influential paper \cite{Williamson} by Williamson from 1936, where he shows that any symmetric matrix is reducible to a normal form by a symplectic matrix, and gives a classification of all matrix normal forms. In Williamson's approach, matrix reductions take place in the base field. We will recover the real Weierstrass-Williamson classification with a genuinely different strategy: \emph{we will lift the problem to suitable extension fields} where the solution is simpler.

Our proof of the Weierstrass-Williamson classification of $p$-adic matrices (which we shall give in \cite{CrePel-matrix}) is different from Williamson's proof and in particular gives another proof (self-contained, while Williamson's is not, as his proof relies on some applications of other substantial works which he cites in his paper \cite{Williamson}) of the classical (real) Weierstrass-Williamson classification in any dimension.

\begin{figure}
	\fbox{\begin{minipage}{\linewidth}\centering
			\vspace{10pt}
			\fbox{\begin{minipage}{0.8\linewidth}\centering
					\vspace{10pt}
					\fbox{\begin{minipage}{0.8\linewidth}\centering
							\vspace{10pt}
							\fbox{\begin{minipage}{0.8\linewidth}\centering
									\vspace{10pt}
									\textbf{Invertible and without multiple eigenvalues}
									
									Possible blocks: $M_\h(1,r,0),M_\e(1,r,1)$ and $M_{\f\f}(1,r,s)$, with $r,s\in\R, r,s\ne 0$
									
									All blocks are different
									\vspace{10pt}
							\end{minipage}}
							
							\vspace{10pt}
							\textbf{Invertible and diagonalizable}
							
							Possible blocks: $M_\h(1,r,0),M_\e(1,r,1)$ and $M_{\f\f}(1,r,s)$, with $r,s\in\R, r,s\ne 0$
							
							Blocks may be repeated
							\vspace{10pt}
					\end{minipage}}
					
					\vspace{10pt}
					\textbf{Invertible}
					
					Possible blocks: $M_\h(k,r,0),M_\e(k,r,a)$ and $M_{\f\f}(k,r,s)$, with $k\in\N,a\in\{-1,1\},r,s\in\R, r,s\ne 0$
					
					Blocks may be repeated
					\vspace{10pt}
			\end{minipage}}
			
			\vspace{10pt}
			\textbf{General}
			
			Possible blocks: $M_\h(k,r,0),M_\h(k,0,a),M_\e(k,r,a)$ and $M_{\f\f}(k,r,s)$, with $k\in\N,a\in\{-1,1\},r,s\in\R$
			
			Blocks may be repeated
			\vspace{10pt}
	\end{minipage}}
	\caption{Hierarchy of degeneracy levels of real matrices, according to the properties of the block decomposition of their normal forms (Theorems \ref{thm:williamson-real} and \ref{thm:williamson-real2}).}
	\label{fig:properties}
\end{figure}

In the real case, if the eigenvalues of $\Omega_0^{-1}M$ are different, the blocks up to dimension $4$ are enough to classify the matrix. Actually, a weaker condition is sufficient: see Theorem \ref{thm:williamson-real}. If the matrix is not diagonalizable or not invertible, the size of the blocks is not limited to $2$ or $4$, but instead can grow indefinitely: see Theorem \ref{thm:williamson-real2}. See Figure \ref{fig:properties} for a hierarchy of properties of the decomposition.

The explicit form of the blocks is as follows:

\begin{definition}\label{def:blocks}
	A \emph{diagonal block of hyperbolic type} is any matrix of the form
	\[M_\h(k,r,a)=\begin{pmatrix}
		& r &   &   &   & & \\
		r &   & 1 &   &   & & \\
		& 1 &   & r &   & & \\
		&   & r &   & \ddots & & \\
		&   &   & \ddots &   & 1 & \\
		&   &   &   & 1 &   & r \\
		&   &   &   &   & r & a
	\end{pmatrix},\]
	for some positive integer $k$, $r\in\R$ and $a\in\{-1,0,1\}$ with $a=0$ if $r\ne 0$, and which has a total of $2k$ rows. A \emph{diagonal block of elliptic type} is any matrix of the form
	\[M_\e(k,r,a)=\begin{pmatrix}
		M_{\e1}(r) & M_{\e2}(1,a) & & & \\
		M_{\e2}(1,a) & M_{\e1}(r) & M_{\e2}(2,a) & & \\
		& M_{\e2}(2,a) & \ddots & & \\
		& & & M_{\e1}(r) & M_{\e2}'(\ell,a) \\
		& & & M_{\e2}'(\ell,a)^T & M_{\e1}'(r)
	\end{pmatrix}\]
	if $k=2\ell+1$ is odd, and
	\[M_\e(k,r,a)=\begin{pmatrix}
		M_{\e1}(r) & M_{\e2}(1,a) & & & \\
		M_{\e2}(1,a) & M_{\e1}(r) & M_{\e2}(2,a) & & \\
		& M_{\e2}(2,a) & \ddots & & \\
		& & & M_{\e1}(r) & M_{\e2}(\ell-1,a) \\
		& & & M_{\e2}(\ell-1,a) & M_{\e1}(r)+M_{\e2}(\ell,a)
	\end{pmatrix}\]
	if $k=2\ell$ is even, for some positive integer $k$, $r\in\R$ and $a\in\{-1,1\}$, and which has a total of $2k$ rows. A \emph{diagonal block of focus-focus type} is any matrix of the form
	\[M_{\f\f}(k,r,s)=\begin{pmatrix}
		M_{\f\f1}(r,s) & M_{\e2}(1,1) & & & \\
		M_{\e2}(1,1) & M_{\f\f1}(r,s) & M_{\e2}(1,1) & & \\
		& M_{\e2}(1,1) & \ddots & & \\
		& & & M_{\f\f1}(r,s) & M_{\e2}(1,1) \\
		& & & M_{\e2}(1,1) & M_{\f\f1}(r,s)
	\end{pmatrix},\]
	for some positive integer $k$ and $r,s\in\R$, and which has a total of $4k$ rows. In the previous blocks the following sub-blocks are used:
	\[
	M_{\e1}(r)=\begin{pmatrix}
		0 & 0 & 0 & r \\
		0 & 0 & -r & 0 \\
		0 & -r & 0 & 0 \\
		r & 0 & 0 & 0
	\end{pmatrix},
	M_{\e1}'(r)=\begin{pmatrix}
		r & 0 \\
		0 & r
	\end{pmatrix},
	M_{\f\f1}(r,s)=\begin{pmatrix}
		0 & s & 0 & r \\
		s & 0 & -r & 0 \\
		0 & -r & 0 & s \\
		r & 0 & s & 0
	\end{pmatrix},\]
	\[M_{\e2}(j,a)=\begin{pmatrix}
		a & 0 & 0 & 0 \\
		0 & 0 & 0 & 0 \\
		0 & 0 & a & 0 \\
		0 & 0 & 0 & 0
	\end{pmatrix}\text{ if $j$ is odd},
	\begin{pmatrix}
		0 & 0 & 0 & 0 \\
		0 & a & 0 & 0 \\
		0 & 0 & 0 & 0 \\
		0 & 0 & 0 & a
	\end{pmatrix}\text{ if $j$ is even},\]
	\[M_{\e2}'(j,a)=\begin{pmatrix}
		a & 0 \\
		0 & 0 \\
		0 & a \\
		0 & 0
	\end{pmatrix}\text{ if $j$ is odd},
	\begin{pmatrix}
		0 & 0 \\
		a & 0 \\
		0 & 0 \\
		0 & a
	\end{pmatrix}\text{ if $j$ is even}.\]
\end{definition}

Now we are ready to state here the classical Weierstrass-Williamson classification; first we state the diagonalizable case, and then the general case. For both statements we provide new proofs in \cite{CrePel-matrix}.

\begin{maintheorem}[Real Weierstrass-Williamson classification, diagonalizable case]\label{thm:williamson-real}
	Let $n$ be a positive integer. Let $\Omega_0$ be the matrix of the standard symplectic form in $\R^{2n}$. Let $M\in\M_{2n}(\R)$ be a symmetric and invertible matrix such that $\Omega_0^{-1}M$ is diagonalizable. Then, there exists a symplectic matrix $S\in\M_{2n}(\R)$ such that $S^TMS$ is a block-diagonal matrix with blocks of hyperbolic, elliptic type or focus-focus type with $k=1$.
\end{maintheorem}

Theorem \ref{thm:williamson-real} generalizes to the following statement:

\begin{maintheorem}[Real Weierstrass-Williamson classification, general case]\label{thm:williamson-real2}
	Let $n$ be a positive integer and let $M\in\M_{2n}(\R)$ be a symmetric matrix. Then, there exists a symplectic matrix $S\in\M_{2n}(\R)$ such that $S^TMS$ is a block diagonal matrix with each of the diagonal blocks being of hyperbolic, elliptic or focus-focus type, as in Definition \ref{def:blocks}.
	
	Furthermore, if there are two matrices $S$ and $S'$ such that $N=S^TMS$ and $N'=S'^TMS'$ are normal forms, then $N=N'$ except by the order of the blocks.
\end{maintheorem}

\end{document}